\newcommand{\inv}{^{-1}}
\newcommand{\id}{\operatorname{id}}
\newcommand{\cone}{\operatorname{cone}}
\newcommand{\diam}{\operatorname{diam}}
\newcommand{\eps}{\varepsilon}
\newcommand{\N}{\mathbb N}
\newcommand{\Z}{\mathbb Z}
\newcommand{\R}{\mathbb R}
\newcommand{\bS}{\mathbb S}
\newtheorem{theorem}{Theorem}[section]
\newtheorem*{theorem*}{Theorem}{\bf}{\it}
\newtheorem{proposition}[theorem]{Proposition}
\newtheorem*{proposition*}{Proposition}{\bf}{\it}
\newtheorem{lemma}[theorem]{Lemma}
\newtheorem*{lemma*}{Lemma}{\bf}{\it}
\newtheorem{corollary}[theorem]{Corollary}
\theoremstyle{definition}
\newtheorem{definition}[theorem]{Definition}
\newtheorem*{definition*}{Definition}
\theoremstyle{remark}
\newtheorem{remark}[theorem]{Remark}
\numberwithin{equation}{section}
\newtheorem{example}[theorem]{Example}
\numberwithin{equation}{section}
\begin{document}

\title[Open and Discrete Maps with PL Branch Set Images]{Open and Discrete Maps with Piecewise Linear Branch Set Images are Piecewise Linear Maps}

\author[R. Luisto]{Rami Luisto}
\address{Department of Mathematical Analysis, Charles University, Sokolovsk\'{a} 
  83, 186 00 Prague 8, Czech Republic
  \and
  Department of Mathematics and Statistics, P.O. Box 35, FI-40014 University of Jyv\"askyl\"a, Finland
  \and
  Digital Workforce Services,
  Mechelininkatu 1 a,
  Helsinki, Finland.
}
\email{rami.luisto@gmail.com}

\author[E. Prywes]{Eden Prywes}
\address{
  Princeton University,
  Fine Hall, Washington Road,
  Princeton NJ 08544-1000 USA
}
\email{eprywes@princeton.edu}

\thanks{
  The first author was partially supported by a grant of the Finnish Academy of Science and Letters,
  the Academy of Finland
  (grant 288501 `\emph{Geometry of subRiemannian groups}')
  and by the European Research Council
  (ERC Starting Grant 713998 GeoMeG `\emph{Geometry of Metric Groups}').
}

\subjclass[2010]{57M12, 30C65, 57M30}
\keywords{Quasiregular mappings, branched covers, branch set, piecewise linear}
\date{\today}

\maketitle

\begin{abstract}
  The image of the branch set of a PL branched cover between PL $n$-manifolds is a simplicial 
  $(n-2)$-complex. We demonstrate that the reverse implication also holds: an open and discrete map
  $f \colon \bS^n \to \bS^n$ with the image of the branch set contained in a simplicial $(n-2)$-complex
  is equivalent up to homeomorphism to a PL branched cover.
\end{abstract}

\section{Introduction}
\label{sec:intro}

A continuous mapping between topological spaces is said to be \emph{open} if the image
of every open set is open and \emph{discrete} if the preimages of points are discrete sets in the domain.
The canonical example of an open and discrete map is the winding map in the plane $w_p(z) = \frac{z^p}{|z|^{p-1}}$, $p \in \Z$, and the 
higher dimensional analogues, $w_p \times \id_{\R^k} \colon \R^{k+2} \to \R^{k+2}$.
An important subclass of open and discrete maps is that of quasiregular mappings.
A mapping $f \colon \R^n \to \R^n$ is \emph{$K$-quasiregular} for some $K \geq 1$
if $f \in W^{1,n}_{\text{loc}}(\R^n)$ and for almost every $x \in \R^n$,
\[
  \|Df\| \le K\det(Df),
\]
where $\|Df\|$ is the norm of the weak differential of $f$ and $\det(Df)$ is the Jacobian determinant of $f$ (see \cite{Rickman-book}).
By the Reshetnyak theorem quasiregular mappings are open and discrete (\cite{Reshetnyak67} or \cite[Section IV.5, p.\ 145]{Rickman-book})
and so open and discrete maps can be seen as generalizations of quasiregular mappings, see
e.g.\ \cite{LuistoPankka-Stoilow} for some further discussion.

We denote by $B_f$ the \emph{branch set} of $f$. This is the set of points where $f$ fails to be
a local homeomorphism. 

Continuous, open and discrete maps are sometimes referred to in the literature as \textit{branched coverings} (see e.g., \cite{Rickman-book} and \cite{MartioSrebro}).
This is due to the fact that if the map is proper, then its restriction to the complement of the preimage of the image of its branch set is a covering map.
This terminology is not standard in other fields. Specifically, in the study of piecewise linear covering maps the term is used to mean an open and discrete piecewise linear map with a simplicial branch set.  For this reason, we will refer to such maps as \textit{open and discrete maps}.

In dimension two the branch set of open and discrete maps is well understood.
By the classical Sto\"ilow theorem (see e.g.\ \cite{Stoilow} or \cite{LuistoPankka-Stoilow}) the branch set of 
an open and discrete map between planar domains is a discrete set. In higher dimensions
the \v{C}ernavskii-V\"ais\"al\"a theorem \cite{Vaisala} states that the branch set of an open and discrete map
between two $n$-manifolds has topological dimension of at most $n-2$. Note that the aforementioned winding map
$w_p \colon \R^n \to \R^n$ gives an extremal example as the branch set of $w_p$ is the $(n-2)$-dimensional
subspace
\begin{align*}
  \{ (0,0,x_3, \dots, x_n):(x_3,\dots,x_n) \in \R^{n-2} \}.
\end{align*}
On the other hand the \v{C}ernavskii-V\"ais\"al\"a result is not strict in all dimensions.
In Section \ref{sec:Nemesis} we describe a classical example by Church and Timourian of an open and discrete map from $\bS^5$ to $\bS^5$ with $\dim_{\mathcal{T}}(B_f) = 1$. It is currently not known
if such examples exist in lower dimensions.  For example, the Church-Hemmingsen conjecture asks if
there exists an open and discrete map in three dimensions with a branch set homeomorphic to a Cantor set (see
\cite{ChurchHemmingsen1} and \cite{AaltonenPankka}).
In general the structure of the branch set of an open and discrete map, or even a quasiregular mapping,
is not well understood but the topic garners great interest. In Heinonen's
ICM address, \cite[Section 3]{HeinonenICM}, he asked the following: 
\begin{flushleft}
  \emph{Can we describe the geometry and the topology of the allowable branch sets of quasiregular
    mappings between metric $n$-manifolds?}
\end{flushleft}

In the setting of piecewise linear (PL) branched covers between PL manifolds the \v{C}ernavskii-V\"ais\"al\"a
result is exact in the sense that the branch set is $(n-2)$-dimensional.
Furthermore, it is a simplicial subcomplex of the underlying PL structure and the branched cover is locally a composition of winding maps.
Even without an underlying PL structure of the mapping, we can in some situations identify that an open and discrete map between Euclidean domains is a winding map. Indeed, by the classical results of
Church and Hemmingsen \cite{ChurchHemmingsen1} and Martio, Rickman and V\"ais\"al\"a \cite{MRV1971},
if the image of the branch set of an open and discrete map $f\colon \Omega \to \R^n$ is contained
in an $(n-2)$-dimensional affine subset, then the mapping is locally topologically equivalent to a winding map.
Winding maps, in turn, admit locally a canonical PL-structure.

The following is the main theorem of this paper. For terminology
on simplicial complexes and cones we refer to Section \ref{sec:Preli}. 

\begin{theorem}\label{thm:maintheorem}
  Let $\Omega \subset \R^n$ be a domain and $f \colon \Omega \to \R^n$ be an open and discrete map.  
  Suppose that $f(B_f)$ is contained in a simplicial $(n-2)$-complex.
  Then $f$ is locally topologically equivalent to a piecewise linear map which is a cone
  of a lower-dimensional PL branched cover $g \colon \bS^{n-1} \to \bS^{n-1}$.
\end{theorem}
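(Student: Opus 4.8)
The plan is to reduce the global statement to an entirely local one and then feed it into the induction on dimension that the cone structure suggests. Fix $x \in \Omega$; we want to show $f$ is topologically equivalent near $x$ to a PL cone. The case $x \notin B_f$ is trivial, so assume $x \in B_f$. By the Černavskii–Väisälä theorem $f(B_f)$ has topological dimension $\le n-2$, and our hypothesis says it lies inside a simplicial $(n-2)$-complex $P$. Near the point $y = f(x)$ the complex $P$ looks (after a PL homeomorphism of $\R^n$) like the cone on a simplicial $(n-3)$-complex $Q \subset \bS^{n-2} \subset \bS^{n-1}$, i.e.\ $P$ is locally $\R^{n-2}_P$-invariant enough that we may choose coordinates in which $y = 0$ and a small sphere $\bS^{n-1}_r$ around $y$ meets $P$ in exactly $\operatorname{cone}$-over-$Q$ data. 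The first real step is therefore: \emph{normalize coordinates at $y$ so that $f(B_f)\cap B(y,r)$ sits inside a standard PL cone.}

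**Next**, I would pass to the good-neighborhood / normal-domain technology for branched covers (the $U(x,f,r)$ normal domains of Väisälä and Rickman). Choosing $r$ small, let $U = U(x,f,r)$ be the $x$-component of $f\inv(B(y,r))$; then $f|_U \colon U \to B(y,r)$ is a proper branched cover of some degree $k = i(x,f)$, branched only over $P \cap B(y,r)$, and $f$ restricts to a covering map $U \setminus f\inv(P) \to B(y,r)\setminus P$. The key point is that $B(y,r)\setminus P$ deformation retracts onto the "link" $\bS^{n-1}_r \setminus P$, and $U\setminus f\inv(P)$ correspondingly retracts onto $\partial U \setminus f\inv(P)$, compatibly with $f$. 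So $f|_{\partial U}$ is (up to the retraction) a branched cover $\bS^{n-1} \to \bS^{n-1}$ whose branch data lies in the simplicial $(n-2)$-complex $P \cap \bS^{n-1}_r$, which is genuinely a simplicial $(n-2)$-complex in dimension $n-1$. This is the inductive step: the same hypothesis now holds one dimension down, yielding (by the inductive hypothesis, with the two-dimensional base case supplied by Stoïlow) a PL mapping $g\colon \bS^{n-1}\to\bS^{n-1}$ and a pair of homeomorphisms $\varphi,\psi$ of $\bS^{n-1}$ with $\psi \circ f|_{\partial U} = g \circ \varphi$.

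**The heart of the argument** is then to cone this equivalence back up. One coness $\varphi$ and $\psi$ radially to homeomorphisms $\Phi\colon \overline U \to \overline{B(0,1)}$, $\Psi \colon \overline{B(y,r)} \to \overline{B(0,1)}$ sending $x,y$ to the cone points, and defines the model map to be $\operatorname{cone}(g)\colon \overline{B(0,1)}\to\overline{B(0,1)}$, which is PL because $g$ is. The content is to check that $\Psi \circ f \circ \Phi\inv = \operatorname{cone}(g)$ \emph{on all of $B(0,1)$}, not just on the boundary sphere and the cone point. This is where I expect the main obstacle: radial coning of a homeomorphism of the sphere gives a homeomorphism of the ball, but it is \emph{not} automatic that coning commutes with a map that was only controlled on the sphere — one needs to know that $f|_U$ itself is, up to homeomorphism, "radial," i.e.\ that the normal domain $U$ is foliated by the preimages of the spheres $\bS^{n-1}_\rho$, $0<\rho\le r$, each carrying a copy of (a map equivalent to) $g$, and that these fit together continuously down to the cone point. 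Establishing this requires the local-connectivity and monotonicity properties of $f\inv$ of small balls (that $U(x,f,\rho)$ shrinks to $\{x\}$ as $\rho\to 0$ and that $f\inv(\bS^{n-1}_\rho)\cap U$ is connected — or a controlled union of spheres), plus an isotopy argument to make the boundary identifications $\varphi_\rho$ vary continuously in $\rho$; the Church–Hemmingsen / Martio–Rickman–Väisälä winding-map analysis is the prototype for exactly this kind of "coning" conclusion and should be adaptable. Once the radial/cone structure of $f|_U$ is in hand, the equivalence $\Psi \circ f \circ \Phi\inv = \operatorname{cone}(g)$ follows, $\operatorname{cone}(g)$ is PL, and the theorem is proved; the remaining bookkeeping — that the PL structures on domain and target can be chosen so that $\operatorname{cone}(g)$ is genuinely simplicial, which is standard once $g$ is PL — is routine.
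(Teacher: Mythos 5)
Your skeleton --- localize, normalize $f(B_f)$ to a radial cone at $y=f(x)$, restrict to the boundary of a normal domain, induct on dimension with Sto\"ilow as the base case, and cone the resulting PL map back up --- is exactly the architecture of the paper's proof (Lemma \ref{lemma:LocallyPL}). But two steps that you treat as available are in fact the main content of the paper, and as written your argument has genuine gaps at both.

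First, you assert that $f|_{\partial U}$ is ``a branched cover $\bS^{n-1}\to\bS^{n-1}$'' as soon as the retraction is set up. A priori $\partial U$ is only a compact set; there is no reason it should be a manifold, let alone a sphere, and the double suspension of $\bS^3\to P$ (Section \ref{sec:Nemesis}) shows that boundaries of normal domains of branched covers between spheres can fail to be manifolds: there they are copies of $S(P)$, which is simply connected with the homology of a sphere but is not a manifold. So the sphericity of $\partial U$ cannot be absorbed into the deformation-retract step; it must use the simplicial hypothesis in an essential way. The paper proves it by an induction through \emph{all} intermediate dimensions: one restricts $f$ to $\partial U$, takes normal domains of that restriction, restricts again, and so on down to dimension $1$ (the collections $\mathcal{U}_k$), shows the $1$-dimensional boundaries are circles because the branch image cannot meet them, and then climbs back up --- each $V\in\mathcal{U}_{k+1}$ is locally $\cone(\bS^k)$ hence a manifold, is shown to be a simply connected homology sphere by a Mayer--Vietoris and transversality computation (Lemma \ref{lemma:homologyequality}), and is then a sphere by the generalized Poincar\'e conjecture (Proposition \ref{prop:boundaryspheres}). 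Your induction on $n$ alone does not supply this, because the inductive hypothesis applies to branched covers between \emph{spheres}, which is exactly what you do not yet know $\partial U$ to be.

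Second, you correctly identify that the crux is showing $f|_U$ is radial --- that the preimages of the spheres $\bS^{n-1}_\rho$ foliate $U\setminus\{x\}$ by mutually homeomorphic copies fitting together continuously --- but you only name this and defer to the Church--Hemmingsen/Martio--Rickman--V\"ais\"al\"a analysis ``being adaptable.'' Those results concern $f(B_f)$ contained in an $(n-2)$-\emph{plane}; the adaptation to a general $(n-2)$-complex is precisely Proposition \ref{prop:UniqueLiftsTopDim}: one shows that radial segments in $B(y,r)$ lift \emph{uniquely} into $\overline U$, by using the local cone (radial) structure of the simplicial branch image to build a radial homotopy avoiding $f(B_f)$ and deriving a contradiction from two distinct lifts. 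Unique radial lifting then yields the homeomorphism $U\setminus\{x\}\simeq(0,r)\times\partial U$ and hence $U\simeq\cone(\partial U)$ (Proposition \ref{prop:HomeomorphismFoliation}), which is what makes the coning of the inductively obtained PL equivalence legitimate. Without these two ingredients the proposal is a correct outline but not a proof.
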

We also prove a global version of Theorem \ref{thm:maintheorem}.
\begin{theorem}\label{thm:GlobalMainTheorem}
  Let $f \colon \bS^n \to \bS^n$ be an open and discrete map such that $f(B_f)$ is contained in a simplicial $(n-2)$-complex.
  Then $f$ is topologically equivalent to a PL branched cover.
\end{theorem}
We formulate and prove our results in the topological setting, but a quasiregular version of the theorem can be acquired using similar methods (see Section \ref{sec:EdenMap}).  The proofs for these theorems will be shown in Section \ref{sec:PL}.  We also remark that Theorem 1.2 holds for general compact PL manifolds and the proof is similar.

This gives a partial answer to a question posed by Heinonen and Semmes in \cite[Question 28]{HeinonenSemmes}.
The previous two statements assume that $f(B_f)$ is contained in a simplicial $(n-2)$-complex.
Since the results are stated up to topological equivalence, Theorem \ref{thm:maintheorem} still applies when $f(B_f)$ is
contained in a set $X$ such that for each point in $X$ there exists a neighborhood $U$ and a homeomorphism
$\phi\colon U \to B(0,1)$ that sends $X \cap U$ to an $(n-2)$-simplicial complex.
Theorem \ref{thm:maintheorem} was shown by Martio and Srebro \cite{MartioSrebro} in dimension three.

A straightforward consequence of Theorem \ref{thm:maintheorem} is that when $f(B_f)$ is contained in a codimension two simplicial complex, the topological dimension of $f(B_f)$ must be exactly $(n-2)$.
However, there are many open and discrete maps for which the image of the branch set is complicated.
Indeed, Heinonen and Rickman construct a quasiregular map $f \colon \bS^3 \to \bS^3$ containing a wild Cantor set in the branch set.
The set $\bS^3 \setminus f(B_f)$ is not simply connected. So, as a Cantor set with a topologically nontrivial complement, the set
$f(B_f)$ cannot be contained in a codimension 2 simplicial complex
(see \cite{HeinonenRickman} and \cite{HeinonenRickman2}).
Here a wild Cantor set refers to any Cantor set $C$ in $\R^n$ such that there is no homeomorphism
$h \colon \R^n \to \R^n$ for which $h(C) \subset \R \times \{ 0 \}^{n-1}$.

We also note that the hypothesis of the PL structure must be made on the image of the
branch set and not on the branch set itself.  In Section \ref{sec:Nemesis} we present a classical
example due to Church and Timourian \cite{ChurchTimourian} of an open and discrete map whose branch set is a simplicial complex, but whose image is not.

A crucial step in the proof of Theorem \ref{thm:maintheorem} is showing that the boundaries
of so-called \emph{normal domains} of the mapping $f$ are $(n-1)$-manifolds.
In dimensions above three we need to study, not only the boundary of
a normal domain $U$, but also the boundaries of the $(n-1)$-dimensional normal domains of the restriction
$f|_{\partial U} \colon \partial U \to f \partial U$, and so forth continuing these restrictions to boundaries
of normal domains all the way down to dimension 1.

This added level of detail turns out to be natural in higher dimensions. The complexity of a
mapping is reflected in how many levels of normal domain boundaries are manifolds.
In general, the boundaries of normal domains can be manifolds when the open and discrete map in question
is not locally a cone of a lower dimensional map.
In dimension three,
Martio and Srebro \cite{MartioSrebro} proved that the boundaries of normal domains are manifolds exactly when
the open and discrete map in question is locally a \emph{path} of lower dimensional open and discrete maps (see Sections \ref{sec:Preli} and \ref{sec:ReverseImplication} for the terminology).

We extend this result to higher dimensions in Section \ref{sec:ReverseImplication}.
A crucial step in this proof is showing that, in Euclidean
spaces, codimension one manifold foliations of punctured domains are necessarily spherical under certain conditions (see
Section \ref{sec:HomotopiesOfHomotopies}).
In dimensions four and above,
the theorem also generalizes naturally to state that the more levels of
boundaries of lower dimensional normal domains are manifolds, the more the mapping displays path-like properties. 
A path of open and discrete maps with nonempty branch sets
increases the topological dimension of the resulting branch set by one.  So this result
gives lower bounds on the topological dimension of the branch set of an open and discrete map (see Section \ref{sec:ReverseImplication}).

The underlying motivation of this
paper is to better understand the connections between the behavior of an open and discrete map $f$ and 
the sets $B_f$ and $f(B_f)$.
From this point of view, we find the apparent ``duality" between the structure of the branch set
and the properties of lower dimensional normal domains very promising.

Finally, as an application of our results, we construct examples of quasiregular mappings in Section \ref{sec:EdenMap} in the form of the
following proposition.
\begin{proposition}\label{prop:projective}
  For each $n \in \N$ there exists a non-constant quasiregular mapping
  $f \colon \R^{2n} \to \mathbb{CP}^n$.
\end{proposition}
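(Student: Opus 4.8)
The plan is to build the map $f\colon \R^{2n}\to \mathbb{CP}^n$ as an explicit branched cover whose branch set has image in a simplicial $(2n-2)$-complex, and then invoke Theorem~\ref{thm:maintheorem} (together with the quasiregular refinement promised in Section~\ref{sec:EdenMap}) to conclude that $f$ is quasiregular after a controlled choice of the local PL models. The natural starting point is the classical observation that $\mathbb{CP}^n$ carries a ramified covering structure over $\bS^{2n}$: indeed, composing the identification $\mathbb{CP}^n = (\bS^{2n+1})/\bS^1$ with standard coordinate charts exhibits $\mathbb{CP}^n$ as the quotient of a polytope (a product of intervals / a simplex) by a group of combinatorial identifications. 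Dually, one can triangulate $\mathbb{CP}^n$ by a PL structure and write down a simplicial branched cover $h\colon \bS^{2n}\to \mathbb{CP}^n$ using the moment-map picture: $\mathbb{CP}^n$ maps onto the standard simplex $\Delta^n$ with torus fibers, and $\bS^{2n}$ maps onto $\Delta^n$ with a different (two-to-one over each interior point) fiber structure. Arranging these two fibrations over the common base $\Delta^n$ compatibly produces the desired PL map.

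Concretely, I would proceed as follows. First, fix the standard PL triangulation of $\mathbb{CP}^n$ coming from the $n+1$ affine charts, so that the transition data is piecewise linear. Second, realize $\bS^{2n}$ as the boundary of an appropriate $(2n+1)$-dimensional polytope (or as a PL sphere glued from $2(n+1)$ copies of a simplex), chosen so that there is an evident simplicial surjection $p\colon \bS^{2n}\to \mathbb{CP}^n$ which is a local homeomorphism away from a codimension-two subcomplex $\Sigma$. The image $p(\Sigma)$ will, by construction, be a subcomplex of the chosen triangulation of $\mathbb{CP}^n$, hence a simplicial $(2n-2)$-complex in each chart. Third, because $p$ is simplicial and $\mathbb{CP}^n$ is PL, $p$ is automatically a PL branched cover in the charts, so it is locally a composition of winding maps; each such local model is quasiregular with a uniform constant depending only on the simplices involved, of which there are finitely many. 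Fourth, precompose with the standard PL double cover $\bS^{2n}\to\bS^{2n}$ or adjust by a quasiconformal reparametrization of the domain $\R^{2n}\cong \bS^{2n}\setminus\{\mathrm{pt}\}$ (using that $\mathbb{CP}^n$ is simply connected, the puncture causes no monodromy obstruction) to obtain $f\colon\R^{2n}\to\mathbb{CP}^n$ non-constant and quasiregular.

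The main obstacle I anticipate is two-fold. The combinatorial part — producing a genuinely simplicial surjection $\bS^{2n}\to\mathbb{CP}^n$ with codimension-two branch image, rather than a map with higher-dimensional singular set — is delicate: a naive quotient of the torus fibration will typically collapse cells and create branching along codimension-one walls, which is not allowed for a branched cover. One must instead use a ``folding'' of the permutohedral/moment-polytope subdivision so that all identifications happen only along codimension-two faces; verifying that the resulting map is open and discrete (not merely continuous and surjective) is the crux. The analytic part is comparatively routine once the PL model is in hand: since there are finitely many local winding-map models, the quasiregularity constant is automatically bounded, and the passage from the PL map on $\bS^{2n}$ to a quasiregular map on $\R^{2n}$ only requires composing with a fixed quasiconformal chart, so no estimate beyond Theorem~\ref{thm:maintheorem} and the finiteness of the simplicial data is needed.
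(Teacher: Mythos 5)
Your plan hinges on producing a branched cover $p\colon \bS^{2n}\to\mathbb{CP}^n$ (a simplicial surjection that is a local homeomorphism off a codimension-two subcomplex), and you correctly flag this construction as the crux. Unfortunately it is not merely delicate: for $n\ge 2$ no such map exists. A discrete open map between closed oriented $2n$-manifolds has a well-defined degree $d\ge 1$, and the transfer (equivalently, Poincar\'e duality together with $f_*[M]=d\,[N]$) forces $f^*\colon H^k(\mathbb{CP}^n;\mathbb{Q})\to H^k(\bS^{2n};\mathbb{Q})$ to be injective for every $k$. Since $H^2(\mathbb{CP}^n;\mathbb{Q})=\mathbb{Q}$ while $H^2(\bS^{2n};\mathbb{Q})=0$ for $n\ge 2$, there is no branched cover from the sphere onto $\mathbb{CP}^n$: the obstruction is topological, not combinatorial, so no amount of care with the moment-polytope folding can fix it. (This does not contradict the proposition itself, because a quasiregular map $\R^{2n}\to\mathbb{CP}^n$ need not extend to $\bS^{2n}$; but your route factors through a global branched cover defined on the compact sphere, and that is exactly what is impossible.) Your final step also drifts: precomposing with a double cover of $\bS^{2n}$ or a quasiconformal chart does not address where the map $p$ is supposed to come from.

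The paper sidesteps the obstruction by replacing $\bS^{2n}$ with $(\mathbb{CP}^1)^n$, which has the requisite degree-two cohomology. Concretely, the Vieta/symmetrization map sending $([z_1:w_1],\dots,[z_n:w_n])$ to the coefficient vector of $\prod_i(z_iu+w_iv)$ is a degree-$n!$ branched cover $(\mathbb{CP}^1)^n\to\mathbb{CP}^n$, branched along the set where two factors agree; its branch image is locally contained in a simplicial $(2n-2)$-complex, so Theorem~\ref{thm:maintheorem} together with Sullivan's uniqueness of quasiconformal structures (outside dimension four) upgrades it to a quasiregular map. Precomposing with the classical quasiregular map $\R^{2n}\to(\bS^2)^n\cong(\mathbb{CP}^1)^n$ built from Alexander maps finishes the proof. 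If you want to salvage your approach, the lesson is that the intermediate compact manifold must be chosen so that $H^*(\mathbb{CP}^n;\mathbb{Q})$ can inject into its cohomology; the symmetric product construction supplies exactly such a choice.
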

As mentioned above, a large motivation for the contemporary study of open and discrete maps comes from their subclass of quasiregular mappings.
Often quasiregular mappings in dimensions larger than $2$ are difficult to construct, but
it can oftentimes be easier to construct open and discrete maps. Thus Proposition \ref{prop:projective} demonstrates
that Theorem \ref{thm:maintheorem} can be applied in some cases to enhance
an open and discrete map into a quasiregular mapping.

\section{Preliminaries}
\label{sec:Preli}

We follow the conventions of \cite{Rickman-book} and
say that $U \subset X$ is a \emph{normal domain} for $f \colon X \to Y$ if $U$ is a precompact domain such that
\begin{align*}
  \partial f (U) = f (\partial U).
\end{align*}
A normal domain $U$ is \emph{a normal neighborhood} of $x \in U$ if
\begin{align*}
  \overline{U} \cap f \inv (\{ f(x) \}) = \{ x \}.
\end{align*}
By $U(x,f,r)$, we denote the component of the open set $f \inv (B_Y(f(x),r))$ containing $x$.
The existence of arbitrarily small normal neighborhoods 
is essential for the theory of open and discrete maps. The following lemma
guarantees the existence of normal domains, the proof can be found in \cite[Lemma I.4.9, p.19]{Rickman-book} (see also \cite[Lemma 5.1.]{Vaisala}).
\begin{lemma}\label{lemma:TopologicalNormalDomainLemma}
  Let $X$ and $Y$ be locally compact complete path-metric
  spaces and $f \colon X \to Y$ an open and discrete map.
  Then for every point $x \in X$ 
  there exists a radius $r_0 > 0$ such that
  $U(x,f,r)$ is a normal neighborhood of $x$ for
  any $r \in (0,r_0)$.
  Furthermore,
  \begin{align*}
      \lim_{r\to 0}\diam U(x,f,r) = 0.
  \end{align*}
\end{lemma}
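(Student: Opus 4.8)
The plan is to produce the normal neighborhood directly as $U(x,f,r)$ for small $r$, following the standard construction. Write $y \coloneqq f(x)$. Since $f$ is discrete, $f\inv(\{y\})$ is a discrete subset of $X$; combined with local compactness, I would first choose a precompact open neighborhood $W$ of $x$ with $\overline{W} \cap f\inv(\{y\}) = \{x\}$. Then $\partial W$ is compact and (as $x \in W$ is an interior point) disjoint from $f\inv(\{y\})$, so $y \notin f(\partial W)$ and $f(\partial W)$ is compact; hence there is $r_0 > 0$ with $B_Y(y,r_0) \cap f(\partial W) = \emptyset$. I claim this $r_0$ works.

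First I would show $U \coloneqq U(x,f,r) \subset W$ for every $r \in (0,r_0)$. Since $X$ is a path-metric space its open balls are path-connected, so $X$ is locally connected and $U$, being a connected component of the open set $f\inv(B_Y(y,r))$, is open. If $U$ met $X \setminus \overline W$, then by connectedness $U$ would meet $\partial W$; but $f(U) \subset B_Y(y,r) \subset B_Y(y,r_0)$ is disjoint from $f(\partial W)$, a contradiction. Thus $U \subset W$, so $\overline U \subset \overline W$ is compact and $\overline U \cap f\inv(\{y\}) \subset \overline W \cap f\inv(\{y\}) = \{x\}$; since $x \in U$ this gives $\overline U \cap f\inv(\{y\}) = \{x\}$, which is the normal-neighborhood condition once normality is established.

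Next I would verify that $U$ is a normal domain, i.e.\ $\partial f(U) = f(\partial U)$. Because $\overline U$ is compact and $f$ continuous, $f(\overline U)$ is compact, hence $f(\overline U) = \overline{f(U)}$. Since $U$ is a component of the open set $f\inv(B_Y(y,r))$ and $X$ is locally connected, $U$ is also relatively closed in that set, so $\partial U$ is disjoint from $f\inv(B_Y(y,r))$; therefore $f(\partial U) \cap B_Y(y,r) = \emptyset$, while $f(\partial U) \subset f(\overline U) = \overline{f(U)} \subset \overline{B_Y(y,r)}$. Openness of $f$ gives that $f(U)$ is open and disjoint from $\partial f(U)$. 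For the inclusion $\partial f(U) \subseteq f(\partial U)$: any $w \in \partial f(U) \subset \overline{f(U)} = f(\overline U)$ equals $f(z)$ with $z \in \overline U$, and $z \notin U$ (else $w \in f(U)$), so $w \in f(\partial U)$. For the reverse inclusion: any $w = f(z)$ with $z \in \partial U$ lies in $\overline{f(U)}$ but not in $f(U)$ (as $f(U) \subset B_Y(y,r)$ while $w \notin B_Y(y,r)$), hence $w \in \partial f(U)$. So $U$ is a normal neighborhood of $x$.

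Finally, for $\lim_{r\to 0}\diam U(x,f,r) = 0$: the sets $U(x,f,r)$ are nested decreasing as $r\downarrow 0$, so it suffices to rule out $\diam U(x,f,r_k) \geq \delta$ for some $\delta>0$ and some sequence $r_k \downarrow 0$. Choosing $a_k,b_k \in U(x,f,r_k)$ with $d(a_k,b_k) \geq \delta/2$ and passing to subsequences converging in the compact set $\overline W$, the limits $a,b$ satisfy $f(a)=f(b)=y$ (since $f(a_k),f(b_k) \in B_Y(y,r_k)$ with $r_k\to 0$) and $d(a,b)\geq\delta/2$; but $\{a,b\} \subset \overline W \cap f\inv(\{y\}) = \{x\}$, a contradiction. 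I expect the most delicate point to be the bookkeeping in the identity $\partial f(U) = f(\partial U)$, where one must invoke local connectedness of $X$ (to make $U$ open and relatively closed in $f\inv(B_Y(y,r))$) together with openness and continuity of $f$ in tandem; the remaining steps are routine point-set topology.
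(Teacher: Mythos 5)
Your proof is correct and is essentially the standard argument that the paper itself does not reproduce but delegates to Rickman's Lemma I.4.9 (and V\"ais\"al\"a's Lemma 5.1): isolate $x$ in $f\inv(\{f(x)\})$ inside a precompact $W$, push $r_0$ below $\dx(f(x), f(\partial W))$ to trap $U(x,f,r)$ inside $W$, and then use openness of $f$ together with $f(\overline U)=\overline{f(U)}$ to get $\partial f(U)=f(\partial U)$. All the point-set details (path-connectedness of balls in a length space giving local connectedness, components of open sets being open and relatively closed, and the compactness argument for the diameter claim) check out.
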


The following \v{C}ernavskii-V\"ais\"al\"a theorem (see \cite{Vaisala}) is fundamental in the study of open and discrete maps.
\begin{theorem}\label{thm:CernavskiiVaisala}
  Let $X$ and $Y$ be $n$-dimensional manifolds.  If $f\colon X \to Y$ is an open and discrete map, then the topological dimension of $B_f, f(B_f)$ and $f^{-1}(f(B_f))$ is bounded above by $n-2$.  In particular, $B_f$, $f(B_f)$ and $f\inv (f (B_f))$ have no interior points and do not locally separate
  the spaces $X$ nor $Y$.
\end{theorem}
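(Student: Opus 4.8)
The plan is to reduce the whole theorem to a single statement: that $f\inv(f(B_f))$ does not locally separate $X$. I begin with the structural preliminaries. The branch set $B_f$ is closed, since the set of points at which $f$ is a local homeomorphism is open. By Lemma~\ref{lemma:TopologicalNormalDomainLemma} every $x \in X$ has arbitrarily small normal neighborhoods $U = U(x,f,r)$; for such a $U$ the restriction $f|_U \colon U \to fU$ is proper, open and discrete, and for each $y \in fU$ one has $f\inv(y) \cap \overline U \subset U$ (a preimage lying on $\partial U$ would map into $f(\partial U) = \partial(fU)$, contradicting $y \in fU$). Consequently, after removing the closed set $f\inv(f(B_f))$, the map
\[
  f \colon U \setminus f\inv(f(B_f)) \longrightarrow fU \setminus f(B_f)
\]
is a proper local homeomorphism, hence a finite covering map. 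One also records, by a direct counting argument on normal domains, that $B_f$ is nowhere dense: the points $y \in fU$ whose fibre in $U$ has the maximal cardinality $k = \deg(f|_U)$ form a dense open set, and their preimages are non-branch points.

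Granting the key statement, the theorem follows formally. It is classical in dimension theory that a closed subset $A$ of an $n$-manifold satisfies $\dim_{\mathcal{T}} A \le n-2$ precisely when $A$ separates no connected open subset; hence $\dim_{\mathcal{T}} f\inv(f(B_f)) \le n-2$, and so $\dim_{\mathcal{T}} B_f \le n-2$ because $B_f \subset f\inv(f(B_f))$. For the image, observe that if $V \setminus f(B_f)$ were disconnected for a small ball $V = fU$ centred at a point of $f(B_f)$, then pulling the decomposition back through the covering displayed above would disconnect $U \setminus f\inv(f(B_f))$; since this cannot happen, $f(B_f)$ separates no connected open subset either, so $\dim_{\mathcal{T}} f(B_f) \le n-2$. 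The remaining assertions are then immediate: invariance of domain turns $\dim_{\mathcal{T}} \le n-1$ into ``no interior points'', and ``does not locally separate'' was exactly the statement we established.

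It remains to prove that for all sufficiently small normal neighborhoods $U$ of a point $x \in X$ the set $U \setminus f\inv(f(B_f))$ is connected. I would argue by induction on $n$. For $n = 2$ this is Sto\"ilow's theorem, the branch set being discrete. For the inductive step I would examine the restrictions of $f$ to small metric spheres $S(x,t) \subset U$: even though such a sphere is not mapped onto $\partial(fU)$, one can --- along a cofinal family of radii $t$, and using the finite covering structure of $f$ over $fU \setminus f(B_f)$ together with the nowhere-density of $B_f$ --- decompose it into pieces on which $f$ behaves like an $(n-1)$-dimensional branched cover, apply the inductive hypothesis there, and reassemble the spheres across the intermediate radii by a cone/collar argument. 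Equivalently one can run this cohomologically: connectedness of $U \setminus f\inv(f(B_f))$ is the vanishing of a reduced zeroth \v{C}ech cohomology group, which by Alexander duality in the ball is governed by the top-degree \v{C}ech cohomology of $f\inv(f(B_f)) \cap \overline U$, and this is in turn controlled, through the transfer homomorphism of the covering, by the corresponding cohomology of $f(B_f) \cap \overline{fU}$; an induction on the scale (or ``depth'') of normal neighborhoods then closes the loop.

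The main obstacle is precisely this inductive step. Near a branch point one has no a priori control on the topology of the level sets of $f$ --- indeed, as the introduction emphasizes, the spheres $S(x,t)$ need not be carried to manifolds, and boundaries of normal domains need not be manifolds --- so the passage to dimension $n-1$ must be made by indirect, homological means rather than by a clean geometric slicing. Arranging the bookkeeping so that the induction is not circular (the $n$-dimensional non-separation statement being needed, tacitly, in order to know that the base $fU \setminus f(B_f)$ of the covering is connected) is the genuine content of the \v{C}ernavskii--V\"ais\"al\"a theorem, and is where I would expect essentially all of the work to lie.
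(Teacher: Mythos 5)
The reduction in your first two paragraphs is essentially sound: granted that $f\inv(f(B_f))$ does not locally separate $X$ (together with the nowhere-density that your multiplicity count on normal domains provides, which is needed to pass from ``does not locally separate'' to ``separates no connected open subset''), the Hurewicz--Wallman criterion and the pull-back of components through the covering $U \setminus f\inv(f(B_f)) \to fU \setminus f(B_f)$ do yield all the stated conclusions. But that reduction is the routine part of the \v{C}ernavskii--V\"ais\"al\"a theorem, and your proposal stops exactly where the theorem begins: the connectedness of $U \setminus f\inv(f(B_f))$ is never proved, and you say so yourself. As written, this is a correct identification of what must be shown, not a proof of it.

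Moreover, the inductive strategy you sketch for the missing step is unlikely to be repairable in the form given. Restricting $f$ to metric spheres $S(x,t)$ or to boundaries of normal domains does not produce branched covers between $(n-1)$-manifolds: as Section \ref{sec:Nemesis} of this paper illustrates with the double suspension of the cover $\bS^3 \to P$, such boundaries can fail to be manifolds, so there is no lower-dimensional instance of the statement to which an induction could appeal. You also flag, correctly, the circularity that the covering $U \setminus f\inv(f(B_f)) \to fU \setminus f(B_f)$ has a connected base only once the non-separation statement is already known. For what it is worth, the paper gives no proof of this theorem at all --- it is quoted from \cite{Vaisala} --- and V\"ais\"al\"a's argument is not a dimensional induction: it works directly with normal domains and the local index, using the behaviour of the multiplicity function $y \mapsto \#\bigl(f\inv(y) \cap U\bigr)$, which attains its maximal value exactly off $f(B_f \cap U)$, to exclude local separation. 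Without that argument (or a genuine substitute for it), the proposal does not establish the theorem.
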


Another concept that we will use below is that of a \emph{cone}.
\begin{definition}
  Let $X$ be a topological space.
  \begin{enumerate}
  \item {
      The \emph{cone} of $X$ is the set $\cone(X) \colonequals (X\times [0,1])/(X\times \{0\})$.
    }
    \item {
      If $Y$ is another topological space,
      a \emph{cone map} $f\colon \cone(X) \to \cone(Y)$ is a continuous map such that $f([(x,t)]) = [(h(x),t)]$ for some $h \colon X \to Y$ and for all $t \in [0,1]$.
      Note that a mapping $g \colon X \to Y$ induces a canonical cone map $\cone(X) \to \cone(Y)$, $[(x,t)] \mapsto [(g(x),t)]$
      which we will denote by $\cone(g)$.
      }
  \item {
      The \emph{suspension} of $X$, denoted $S(X)$, is the disjoint union of two copies of $\cone(X)$ glued together by the identity at $X\times \{1\}$.
    }
  
    \item {
      The \emph{suspension map of $f$}, denoted $S(f) \colon S(X) \to S(Y)$, is defined in a similar manner as the cone map.
    }
  \end{enumerate}
\end{definition}
Note that $\cone(\bS^k)$ is homeomorphic to the closed $(k+1)$-ball, and
$S(\bS^k)$ is homeomorphic to $\bS^{k+1}$.

\begin{definition}
  A mapping $f\colon X \to Y$ is \emph{topologically equivalent} to $g \colon X' \to Y'$ if there exists homeomorphisms $\phi$ and $\psi$ such that
  \[
    f = \psi^{-1} \circ g \circ \phi.
  \]
  In other words the following diagram commutes:
  \[
    \begin{tikzcd}
      X \arrow{r}{f} \arrow[swap]{d}{\phi} & Y \arrow{d}{\psi} \\
      X'  \arrow{r}{g} & Y'
    \end{tikzcd}.
  \]
  
\end{definition}

\subsection{Simplicial complexes and PL-structures}

We largely follow \cite{rourkesanderson} in our notation and terminology. 
We list some of the basic definitions and concepts in this section for the sake of completeness.
\begin{definition}
  Let $\{v_0,\dots,v_k\} \subset \R^n$ be a finite set of points not contained in any $(k-1)$-dimensional affine subset.  The $k$-\emph{simplex} $D$ is defined as
  \[
    D = \left\{ \sum_{i=1}^k \lambda_iv_i : \sum_{i=1}^k\lambda_i = 1, \lambda_i \ge 0\right\}.
  \]
  We say $D$ is \emph{spanned} by $\{v_1,\dots,v_k\}$.
\end{definition}
A \emph{face} of a simplex $D$ is a simplex spanned by a subset of the vertices that span $D$.
\begin{definition}
  A \emph{simplicial complex} $X$ is a locally finite collection of simplices such that
  \begin{enumerate}
  \item if $D_1 \in X$ and $D_2 $ is a face of $D_1$, then $D_2 \in X$, and
    
  \item if $D_1,D_2 \in X$, then $D_1\cap D_2$ is a face of both $D_1$ and $D_2$.
    
  \end{enumerate}
  The simplicial complex $X$ is \emph{$k$-dimensional} if the highest degree simplex in $X$ is a $k$-simplex.
\end{definition}
We will often consider $X$ as a subset of $\R^n$.  In this case we tacitly identify $X$ with the union of the simplices contained in $X$.

\begin{definition}
  Let $\Omega \subset \R^n$ be a domain. A mapping $f \colon \Omega \to \R^n$ is piecewise
  linear if there exists a simplicial complex $X = \Omega$ such that $f$ is linear on each $n$-simplex in $X$.
\end{definition}

\subsection{Algebraic topology}

We refer to \cite{Hatcher} for basic definitions and theory of homotopy and homology.
We denote the homotopy groups and the singular homology groups of a space $X$ by $\pi_k(X)$ and $H_k(X)$,
respectively, for $k \in \N$.
A closed $n$-manifold $M$ is said to be a \emph{homology sphere}
if $H_0(M) = H_n(M) = \Z$ and $H_k(M) = 0$ for all $k \neq 0,n$.

A homology sphere need not be a sphere. The canonical example
of a nontrivial homology sphere
is the so-called \emph{Poincar\'{e} homology sphere}, defined
by gluing the opposing faces of a solid dodecahedron together with
a twist (see e.g.\ \cite{Cannon} and \cite{KirbyScharlemann}). 
We will denote the Poincar\'{e} homology sphere by $P$
and note that even though the suspension $S(P)$ of $P$ is not
a manifold, the double suspension $S^2(P)$ of $P$ is homeomorphic
to $\bS^5$
(see again e.g.\ \cite{Cannon} and \cite{KirbyScharlemann}). 

An important result for us is the following celebrated theorem that is an immediate
corollary of the Hurewicz isomorphism theorem \cite[Theorem 4.32]{Hatcher}
combined with the generalized Poincar\'e conjecture (see \cite{smale}, \cite{freedman}, and \cite{kleinerlott}).
\begin{theorem}\label{thm:WhiteheadHomology}
  If $M$ is a simply connected homology sphere, then $M$ is homeomorphic to the $n$-dimensional sphere $\bS^n$.
\end{theorem}

\subsection{The double suspension of the cover $\bS^3 \to P$.}
\label{sec:Nemesis}

To contrast our results and underline the necessity of the more technical
arguments, we recall in this section a classical open and discrete map from $\bS^5$ to $\bS^5$ constructed
by Church and Timourian \cite{ChurchTimourian} with complicated
branch behavior. This example shares many
of the properties of open and discrete maps with $f(B_f)$ contained in an $(n-2)$-simplicial
complex, but it is not a PL mapping. For further discussion on this map see e.g.\
\cite{AaltonenPankka}.

We note first that the Poincar\'{e} homology sphere can be equivalently defined
as a quotient of $\bS^3$ under a group action of order 120 (see \cite{KirbyScharlemann}).
The mapping $f \colon \bS^3 \to P$ induced by the group action is a covering map,
and since $\bS^3$ is simply connected we see that $\bS^3$ is
the universal cover of the Poincar\'{e} homology sphere $P$.
As a covering map $f$ has an empty branch set but the suspension of $f$,
$S(f) \colon S(\bS^3) \to S(P)$,
has a branch set equal to the two suspension points. By definition of the cone of a map,
the preimage of either suspension point $P \times\{0\}$ or $P \times \{1\}$ is a point and
the preimage of any other point is a discrete set of $120$ points. Thus the double suspension of $f$,
\begin{align*}
  S^2(f) \colon S^2(\bS^3) \cong \bS^5 \to S^2(P) \cong \bS^5
\end{align*}
is an open and discrete map between $5$-spheres and has a branch set equal to the suspension of the two branch points of $S(f)$. 
Thus the branch set $B_{S^2(f)}$ is PL-equivalent to $\bS^1$ and so we see that $S^2(f)$
is an open and discrete map between two spheres with a branch set of codimension four.

The image of the branch set $B_{S^2(f)}$ is complicated since its
complement has a fundamental group of 120 elements. Furthermore even though the branch set is PL-equivalent to $\bS^1$, the image
of the branch set is not PL-equivalent to a simplicial complex even though it is a Jordan curve
in $\bS^5$. Thus the map $S^2(f)$ does not satisfy the hypothesis of our main theorem.

We also remark for future comparison that for $S^2(f)$ the 
boundaries of normal neighborhoods $U(x_0,f,r)$, where
$x_0$ is one of the two suspension points of the second suspension,
are homeomorphic to $S(P)$.  
This means that the suspension of the Poincar\'{e} homology sphere foliates a punctured neighborhood
of a point in $\R^5$, but the simply connected space $S(P)$ with homology groups of a sphere is not a manifold.

\section{Boundary of a normal domain}
\label{sec:Bdry}

In this section we show that for an open and discrete map
$f \colon \Omega \to \R^n$ with $f(B_f)$ contained in a simplicial $(n-2)$-complex,
the boundaries of sufficiently small normal domains are homeomorphic to a sphere.
The main step of the proof takes the form of an inductive argument where in the
inductive step we restrict an open and discrete map to the boundary of a small normal
domain and study the new map between the lower dimensional spaces.
Since we do not a priori know that the boundary of a normal domain is a manifold,
many of the results in this section are proved in a more general setting where the domain of the mapping is not assumed to be a manifold.

We begin with a few preliminary results on the behavior of $f$ on the boundary
of a normal domain. The following Lemma \ref{lemma:MappingAtTheBoundary}
is known to the experts in the field (see e.g.\ \cite{MartioSrebro})
but we give a short proof for the convenience of the reader.
\begin{lemma}\label{lemma:MappingAtTheBoundary}
  Let $X$ be a locally compact and complete metric space and $f \colon X \to \R^n$ an open and discrete map.
  Fix $x_0 \in X$ and let $r_0>0$ be such that $U_r \colonequals U(x_0,f,r)$ is a normal neighborhood
  of $x_0$ for all $r \leq r_0$. Then the restriction 
  \begin{align*}
    f|_{\partial U_r} \colon \partial U_r \to \partial B(f(x_0),r)
  \end{align*}
  is an open and discrete map for all $r<r_0$.
\end{lemma}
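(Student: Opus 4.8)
We need to show that $f|_{\partial U_r}$ is continuous, discrete, and open as a map onto $\partial B(f(x_0),r)$. Continuity and discreteness are immediate from the corresponding properties of $f$ (discreteness is inherited by any restriction). So the content is: (1) $f(\partial U_r) \subseteq \partial B(f(x_0),r)$, and that the map is onto; (2) openness of $f|_{\partial U_r}$.

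Let me think about each piece.

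**Piece (1): the image is exactly $\partial B(f(x_0),r)$.** Since $U_r$ is a normal domain, by definition $f(\partial U_r) = \partial f(U_r)$. Now $f(U_r) \subseteq B(f(x_0),r)$ since $U_r$ is a component of $f^{-1}(B(f(x_0),r))$. Also $f(U_r)$ is open (as $f$ is open and $U_r$ is open). I'd want to argue $f(U_r) = B(f(x_0),r)$: it's open, and it's relatively closed in $B(f(x_0),r)$ — indeed if $y_k \in f(U_r)$, $y_k \to y \in B(f(x_0),r)$, pick $x_k \in U_r$ with $f(x_k) = y_k$; $\overline{U_r}$ is compact (precompactness of a normal domain), so a subsequence converges to $x \in \overline{U_r}$ with $f(x) = y$; if $x \in \partial U_r$ then $f(x) = y \in \partial f(U_r) = f(\partial U_r) \subseteq \overline{B(f(x_0),r)} \setminus B(f(x_0),r)$, contradicting $y \in B(f(x_0),r)$; so $x \in U_r$ and $y \in f(U_r)$. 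Since $B(f(x_0),r)$ is connected, $f(U_r) = B(f(x_0),r)$. Hence $f(\partial U_r) = \partial f(U_r) = \partial B(f(x_0),r)$, which handles surjectivity and the codomain.

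**Piece (2): openness.** This is the main obstacle. Let $x \in \partial U_r$ and $V \subseteq \partial U_r$ relatively open, $x \in V$; we must show $f(V)$ is relatively open in $\partial B(f(x_0),r)$, i.e. contains a relative neighborhood of $f(x)$ in the sphere $\partial B(f(x_0),r)$. Write $W$ for an open set in $X$ with $W \cap \partial U_r = V$. I would apply Lemma \ref{lemma:TopologicalNormalDomainLemma} to get a small normal neighborhood $U' = U(x,f,\rho) \subseteq W$ of $x$ with $\rho$ small. Then, by the argument of Piece (1) applied to $U'$, we have $f(U') = B(f(x),\rho)$ and $f(\partial U') = \partial B(f(x),\rho)$. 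The key geometric claim is that the normal neighborhood $U'$ of the boundary point $x$ "crosses" $\partial U_r$: more precisely, $f(U' \cap \partial U_r)$ should contain $\partial B(f(x_0),r) \cap B(f(x),\rho)$, a relative neighborhood of $f(x) = f(x_0) + r\cdot(\text{unit vector})$ in the target sphere. To see this, let $y \in \partial B(f(x_0),r) \cap B(f(x),\rho)$. Since $y \in B(f(x),\rho) = f(U')$, there is $z \in U'$ with $f(z) = y$. I claim $z \in \partial U_r$. Indeed $|f(z) - f(x_0)| = r$, so $z \notin f^{-1}(B(f(x_0),r))$, hence $z \notin U_r$. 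On the other hand $z \in U'$ and I claim $U'$ meets $U_r$ (so $z$ cannot be in the "far" exterior): $x \in \partial U_r$ means every neighborhood of $x$ meets $U_r$, and $U'$ is a connected neighborhood of $x$; if $z$ were in a component of $X \setminus \partial U_r$ not equal to the one containing points of $U_r \cap U'$, then $U'$ would be disconnected by $\partial U_r$ — but $U'$ is connected, so $\partial U_r \cap U' \neq \emptyset$ separates nothing; the honest argument is: $U'$ is connected, $U' \cap U_r \neq \emptyset$ (as $x \in \partial U_r \cap \overline{U'}$... wait, $x\in U'$), and $U' \not\subseteq U_r$ (since $z \in U' \setminus U_r$), so by connectedness $U' \cap \partial U_r \neq \emptyset$ — but I need $z$ itself on the boundary. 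Here is the cleaner finish: consider the set $U' \cap f^{-1}(\partial B(f(x_0),r))$. Any point $w \in U'$ with $f(w) \in \partial B(f(x_0),r)$ satisfies $w \in \overline{U_r}$: otherwise $w$ lies in a component $C$ of $X \setminus \overline{U_r}$, but $f(C)$ is open and $f(\overline{U_r}) = \overline{B(f(x_0),r)}$ is closed, forcing $f(C) \cap B(f(x_0),r) = \emptyset$ contradicts... hmm, this needs the component structure. I would instead invoke the standard fact (used implicitly in \cite{Rickman-book} for normal domains) that $f^{-1}(\overline{B(f(x_0),r)}) \cap (\text{nbhd of } x_0) = \overline{U_r}$ and $f^{-1}(\partial B(f(x_0),r)) \cap \overline{U_r} = \partial U_r$ — the latter because $f(U_r) = B(f(x_0),r)$ is open, so no interior point of $U_r$ maps to the boundary sphere. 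Granting $z \in \overline{U_r}$ and $z \notin U_r$, we get $z \in \partial U_r$. And $z \in U' \subseteq W$, so $z \in V$, whence $y = f(z) \in f(V)$. This shows $f(V) \supseteq \partial B(f(x_0),r) \cap B(f(x),\rho)$, a relative neighborhood of $f(x)$; openness follows.

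The delicate point, and the one I'd write most carefully, is the identification $f^{-1}(\partial B(f(x_0),r)) \cap \overline{U_r} = \partial U_r$: the inclusion $\supseteq$ is $f(\partial U_r) = \partial B(f(x_0),r)$ (Piece 1), and $\subseteq$ follows since points of $U_r$ map into the open ball $B(f(x_0),r)$ by definition of $U_r$ as a component of $f^{-1}(B(f(x_0),r))$. Once this is in hand the rest is bookkeeping. I do not expect to need the hypothesis on $f(B_f)$ anywhere here — this lemma is purely about normal domains of general branched covers — so the proof stays at the level of point-set topology of open maps.
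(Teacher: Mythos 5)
Your proof is correct in substance but follows a genuinely different route from the paper's. The paper proves openness by path lifting: given $y=f(x_1)$ with $x_1\in V$, it uses the Bonk--Meyer localization lemma to find $\delta$ with $f^{-1}(B(y,\delta))$ contained in small disjoint balls around the finitely many preimages of $y$, lifts a path in $\partial B(f(x_0),r)$ from $y$ to a nearby $y'$ starting at $x_1$, and observes the lift stays in $V$. You instead work at the boundary point $x$ directly: a small normal neighborhood $U'$ of $x$ satisfies $f(U')=B(f(x),\rho)$, and intersecting with the target sphere produces the required relative neighborhood of $f(x)$ inside $f(V)$. Both work; yours avoids path lifting entirely at the cost of leaning on the identity $f^{-1}\bigl(\overline{B}(f(x_0),r)\bigr)\cap U_{r_0}=\overline{U_r}$. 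That identity is genuine (it is part of the normal neighborhood lemma, Rickman I.4.9(2), of which the paper's Lemma \ref{lemma:TopologicalNormalDomainLemma} states only a fragment), and it is doing real work in your argument: without it, an arbitrary preimage $z\in U'$ of a point $y\in\partial B(f(x_0),r)$ could a priori lie on the boundary of a \emph{different} component of $f^{-1}(B(f(x_0),r))$ rather than on $\partial U_r$ (think of $z\mapsto z^2$ near $0$ with $x_0=1$, where two components of the preimage of a ball have touching closures). So you should state explicitly that $U'$ is chosen inside $U_{r_0}$ (possible since $\partial U_r\subset U_{r_0}$) and cite or prove the component identity $U_r=U_{r_0}\cap f^{-1}(B(f(x_0),r))$; once that is in place, your observation that $z\notin U_r$ but $z\in\overline{U_r}$ forces $z\in\partial U_r\cap U'\subset V$ closes the argument. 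Your Piece (1), establishing $f(U_r)=B(f(x_0),r)$ and hence $f(\partial U_r)=\partial B(f(x_0),r)$ by the open-and-relatively-closed argument, is standard and correct, and is left implicit in the paper.
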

\begin{proof}
  The restriction is clearly continuous and discrete, so it suffices to show that
  it is an open map. 
  Let $V \subset \partial U_r$ be a relatively open set and suppose $y = f(x_1) \in f(V)$, where $x_1 \in V$. Additionally, suppose that $\{x_1,\dots,x_k\} = f^{-1}(y)$.  For $\delta > 0$ let $N_\delta (y) = B(y,\delta)\cap \partial B(f(x_0),r)$ and for $\epsilon > 0$ let $N_\epsilon(x_i) = B(x_i,\epsilon) \cap \partial U_r$.
  
  Fix $\epsilon > 0$ so that $N_\epsilon(x_i) \cap N_\epsilon(x_j) = \emptyset$ for $i \ne j$ and $N_\epsilon(x_1) \subset V$. By \cite[Lemma 5.15]{bonkmeyer}, there exists a $\delta >0$ such that 
  \begin{align*}
      f^{-1}(B(y,\delta)) \subset \cup_{i=1}^k B(x_i,\epsilon).
  \end{align*}
  Let $y' \in N_\delta(y)$.  There exists a path $\gamma$ connecting $y$ to $y'$ in $N_\delta(y)$.  By the path-lifting properties of open and discrete maps, (see e.g.\ \cite[Chapter II.3]{Rickman-book} for the Euclidean setting or \cite{Luisto-Characterization} for a general case), $\gamma$ can be lifted to paths $\gamma_1,\dots,\gamma_k$ each contained in $N_\epsilon(x_i)$.  The end point of each lift $x_i'$ maps to $y'$.  So $N_\delta(y) \subset f(N_\epsilon(x_1)) \subset f(V)$, which means that $f(V)$ is open.
  
\end{proof}

We will repeatedly choose suitably small normal neighborhoods for points in the domain. For
clarity we formulate this selection as the following lemma.
\begin{lemma}\label{lemma:LemmaX}
  Let $X$ be a locally connected, locally compact and complete metric space and $f \colon X \to \R^n$ an open and discrete map.
  Then for every $x \in X$ there exists a radius $r(x,f) > 0$ such that for all $r < r(x,f)$,
  $U(x,f,r)$ is a normal neighborhood of $x$.

  Furthermore if $f(B_f)$ is contained in an $(n-2)$-simplicial complex we may assume that 
  $f(B_f) \cap f (\partial U(x,f,r)) = f(B_f) \cap \partial B(f(x),r)$
  is contained in an $(n-3)$-simplicial complex (up to a global homeomorphism) for all $r < r(x,f)$.
\end{lemma}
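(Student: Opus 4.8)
For the first assertion I would appeal directly to Lemma~\ref{lemma:TopologicalNormalDomainLemma} applied to $f\colon X\to\R^n$; local connectedness of $X$ guarantees that $U(x,f,r)$, being a connected component of the open set $f\inv(B(f(x),r))$, is a domain, and the lemma provides a radius $r_0>0$ such that $U(x,f,r)$ is a normal neighborhood of $x$ for all $r<r_0$. The content then lies entirely in the ``furthermore'', which is a statement about how the polyhedron carrying $f(B_f)$ meets small round spheres in the target. Write $Y$ for the simplicial $(n-2)$-complex with $f(B_f)\subset Y$ and put $p\colonequals f(x)$. If $p\notin Y$, there is nothing to prove: just take $r(x,f)<\min\{r_0,\operatorname{dist}(p,Y)\}$, so that $\overline{B(p,r)}\cap Y=\emptyset$ and hence $f(B_f)\cap\partial B(p,r)=\emptyset$ for all $r<r(x,f)$. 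So assume $p\in Y$. After a barycentric subdivision we may take $p$ to be a vertex of $Y$, so that $\Lambda\colonequals\operatorname{lk}(p,Y)$ is an honest finite simplicial complex of dimension at most $\dim Y-1\le n-3$; choosing in addition a triangulation $\mathcal T$ of a neighborhood of $p$ in $\R^n$ having $Y$ as a subcomplex, the link $\operatorname{lk}(p,\mathcal T)$ of this vertex in a triangulated $n$-manifold is PL-homeomorphic to $\bS^{n-1}$ and contains $\Lambda$ as a subcomplex.

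The crucial input is the classical \emph{local conical-neighborhood theorem} for polyhedra (cf.\ \cite{rourkesanderson}): there is $\epsilon_1>0$ such that for every $0<r<\epsilon_1$ the pair $(\overline{B(p,r)},\,Y\cap\overline{B(p,r)})$ is homeomorphic to the cone pair $(\cone(\operatorname{lk}(p,\mathcal T)),\,\cone(\Lambda))$ by a homeomorphism carrying $p$ to the cone point. Restricting such a homeomorphism to boundaries — a homeomorphism of balls sends boundary sphere to boundary sphere — produces, for each $0<r<\epsilon_1$, a homeomorphism
\[
  h_r\colon\partial B(p,r)\longrightarrow\operatorname{lk}(p,\mathcal T)\cong\bS^{n-1},\qquad h_r\bigl(Y\cap\partial B(p,r)\bigr)=\Lambda.
\]

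Now set $r(x,f)\colonequals\min\{r_0,\epsilon_1\}$. For $r<r(x,f)$ the set $U(x,f,r)$ is a normal neighborhood of $x$, so $f(\partial U(x,f,r))=\partial f(U(x,f,r))=\partial B(p,r)$ (this is the stated set equality), and under $h_r$ the subset $f(B_f)\cap\partial B(p,r)\subset Y\cap\partial B(p,r)$ is carried into the simplicial $(n-3)$-complex $\Lambda\subset\bS^{n-1}$. Since $h_r$ is a homeomorphism of the whole sphere $\partial B(p,r)$, this is exactly the assertion, with $h_r$ playing the role of the global homeomorphism.

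The step I expect to be the main obstacle is precisely the local conical structure statement: one needs not merely that $Y$ near $p$ is abstractly a cone on its link, but that \emph{all} sufficiently small round metric spheres $\partial B(p,r)$ already see this conical structure, and that the homeomorphism can be arranged on the ambient pair $(\R^n,Y)$ so that it restricts to a homeomorphism of the full $(n-1)$-sphere taking $Y\cap\partial B(p,r)$ onto the combinatorial link $\Lambda$. This is standard but requires care — for instance via the observation that a small metric ball about $p$ is, up to homeomorphism, a regular neighborhood of $\{p\}$ in $(\R^n,Y)$, together with the uniqueness of regular neighborhoods, or by approximating the distance function $|\,\cdot-p\,|$ on $Y$ by a PL function with no critical values in $(0,\epsilon_1)$. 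The remaining ingredients — the subdivision making $p$ a vertex, the dimension count, the ambient triangulation, and the case $p\notin Y$ — are routine.
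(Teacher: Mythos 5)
The paper states this lemma without a proof, so there is no written argument to compare against; your proposal is correct and supplies exactly what the authors leave implicit. The first claim is indeed an immediate application of Lemma~\ref{lemma:TopologicalNormalDomainLemma} together with local connectedness. For the ``furthermore'', your reduction to the link of $p=f(x)$ after a subdivision making $p$ a vertex is the right idea, and the technical point you flag --- that small \emph{round} spheres already realize the conical structure --- is handled more directly by radial projection than by regular-neighborhood uniqueness: once $r$ is smaller than the distance from $p$ to every simplex of the ambient triangulation $\mathcal T$ not containing $p$, each simplex through $p$ is a cone on its opposite face with apex $p$, and the map $v\mapsto p+r(v-p)/\|v-p\|$ restricts to a homeomorphism of $\operatorname{lk}(p,\mathcal T)\cong\bS^{n-1}$ onto $\partial B(p,r)$ carrying $\Lambda$ onto $Y\cap\partial B(p,r)$. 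This is consistent with how the lemma is used later in the paper, where the authors invoke precisely this radial structure of $f(B_f)$ near $f(x_0)$ in the proof of Proposition~\ref{prop:UniqueLiftsTopDim}.
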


\begin{proof}
The first assertion follows from Lemma \ref{lemma:TopologicalNormalDomainLemma}.

To prove the second assertion we first note that since $\partial B(f(x),r)$ is homeomorphic to $S^{n-1}$, there exists a finite simplicial decomposition so that $\partial B(f(x),r)$ becomes a finite $(n-1)$-simplicial complex.
We may subdivide the simplicial structures of the simplicial sets that contains $f(B_f)$ and $\partial B(f(x),r)$ so that their intersection forms a simplicial structure as well.

It remains to show that the dimension of the intersected structures is no greater than $n-3$. It cannot be larger than $n-2$ by our hypothesis on $f(B_f)$.
By the local finiteness of the complex that contains $f(B_f)$ we may choose $r(x,f)$ small enough so that the complex that contains $f(B_f)$ is not contained in $B(f(x_0,r))$ for all $r < r(x,f)$.  This gives that the dimension of the intersection is no greater than $n-3$.
\end{proof}

\subsection{Radial properties of the mapping $f$}

In the following arguments we need a consistent
way of describing boundaries of normal domains of mappings which are themselves
restrictions of ambient mappings to boundaries of normal domains. To this
end we define nested collections of lower dimensional normal domains.

\begin{definition}
  Let $\Omega \subset \R^n$ be a domain and $f \colon \Omega \to \R^n$ an open and discrete map.
  Denote by 
  $\mathcal{U}_{n-1}$ the collection of boundaries of normal
  domains $U(x,f,r) \subset \Omega$ with $r < r(x,f)$ as in Lemma \ref{lemma:LemmaX}.
  For $k = n-1, \ldots, 2$
  we similarly define $\mathcal{U}_{k-1}$ to be the collection of boundaries of normal
  domains $U(x,f|_{V},r) \subset V$, $V \in \mathcal{U}_{k}$, with $r < r(x,f|_{V})$ as in Lemma \ref{lemma:LemmaX}.
  We call these collections as \emph{lower dimensional normal domains}.
  
  By Lemma \ref{lemma:LemmaX}, in the case where $f(B_f)$ is contained in an $(n-2)$-simplicial complex we may assume that
  for given $1 \le k \le n-1$ and
  $V\in \mathcal{U}_k$ that the set $f(B_f) \cap f(\partial U(x,f|_{V},r))$
  is contained, up to a homeomorphism, in an $(n-3)$-simplicial complex for $r < r(x,f|_{V})$.
\end{definition}

\begin{lemma}\label{lemma:ImageIsASphere}
  Let $\Omega \subset \R^n$ be a domain and $f \colon \Omega \to \R^n$ be an open and discrete map
  with $f(B_f)$ contained in an $(n-2)$-simplicial complex. 
  Then for any $k = n-1,\ldots,1$ and $V \in \mathcal{U}_k$, $f(V)$ is homeomorphic to a sphere.
\end{lemma}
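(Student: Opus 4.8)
The plan is to run a downward induction on $k$, proving along the way the formally stronger statement that for every $V\in\mathcal{U}_k$ the image $fV$ is a genuine round $k$-sphere in $\R^n$ -- the intersection of a Euclidean sphere with an affine $(k+1)$-plane, equivalently a metric sphere sitting inside a round sphere. The underlying observation is that in passing from a branched cover $g\colon W\to gW$ to its restriction $g|_{\partial U}\colon\partial U\to g(\partial U)$ to the boundary of a small normal neighborhood $U$, the new image $g(\partial U)$ is exactly the metric sphere $\partial_{gW}B_{gW}(g(x),r)$ of $gW$; so the whole statement reduces to the elementary fact that the boundary of a small metric ball in a round sphere is again a round sphere.

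For the base case $k=n-1$, take $V=\partial U(x,f,r)$ with $r<r(x,f)$. Since $U(x,f,r)$ is a connected component of the open set $f^{-1}(B(f(x),r))$, it is relatively clopen there, so $f(\partial U)\cap B(f(x),r)=\emptyset$; together with $\overline{f(U)}=f(\overline U)$ (precompactness of $U$ and continuity of $f$) and the connectedness of $B(f(x),r)$ this forces $f(U)=B(f(x),r)$, and hence $fV=f(\partial U)=\partial B(f(x),r)$, a round $(n-1)$-sphere. For the inductive step, assume the claim at level $k+1$ and let $V\in\mathcal{U}_k$, say $V=\partial U(x,f|_W,r)$ with $W\in\mathcal{U}_{k+1}$, $x\in W$ and $r<r(x,f|_W)$. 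By the inductive hypothesis $fW$ is a round $(k+1)$-sphere, in particular a manifold, so the metric ball $B_{fW}(f(x),r)$ is connected. The same clopen-plus-precompactness argument, now for the branched cover $f|_W\colon W\to fW$ (which is again a branched cover by Lemma~\ref{lemma:MappingAtTheBoundary}), yields $f|_W(U(x,f|_W,r))=B_{fW}(f(x),r)$, and therefore $fV=f|_W(\partial U)=\partial_{fW}B_{fW}(f(x),r)$.

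To finish, observe that if $S$ is a round $m$-sphere in $\R^n$ with center $c$ and radius $R$ and $p\in S$, then for every $0<r<\diam S$ the metric sphere $\{\,y\in S:|y-p|=r\,\}$ is the intersection of $S$ with the radical hyperplane of the two Euclidean spheres $\{|y-c|=R\}$ and $\{|y-p|=r\}$, hence a round $(m-1)$-sphere; the same holds verbatim for the intrinsic length metric on $S$, whose metric spheres are the latitude spheres. Taking $S=fW$ closes the induction. The delicate point is the identity $fV=\partial_{fW}B_{fW}(f(x),r)$: it requires $U(x,f|_W,r)$ to be precompact and $B_{fW}(f(x),r)$ to be a connected proper subset of $fW$, which is precisely why $\mathcal{U}_k$ is built only from radii $r<r(x,f|_W)$ supplied by Lemma~\ref{lemma:LemmaX} (shrinking further, if necessary, to keep the ball proper). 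The hypothesis that $f(B_f)$ lies in an $(n-2)$-simplicial complex is what makes the whole tower $\mathcal{U}_{n-1},\ldots,\mathcal{U}_1$ meaningful in the first place: via the ``furthermore'' clause of Lemma~\ref{lemma:LemmaX} it keeps the image of the branch set inside a simplicial complex of one lower dimension at each stage, so that the successive restrictions are again branched covers to which Lemmas~\ref{lemma:MappingAtTheBoundary} and~\ref{lemma:LemmaX} apply.
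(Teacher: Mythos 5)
Your proof is correct and follows essentially the same route as the paper: a downward induction on $k$ that reduces each step to the situation where the ambient image is a sphere, combined with the standard normal-domain argument of \cite[Lemma I.4.9]{Rickman-book} showing that the image of a normal neighborhood is the full metric ball and the image of its boundary is the bounding metric sphere. Your strengthening to \emph{round} spheres (so that metric balls in the image are automatically connected spherical caps with round boundaries) is a clean way of making the inductive step self-contained, but it is the same underlying argument.
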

\begin{proof}
  By using an inductive argument we see that it suffices to study the
  case where $f(V) \subset f(U)$ with $U \in \mathcal{U}_{k+1}$ and
  $f(U)$ is a $(k+1)$-sphere.
  In this setting we may apply Lemma \ref{lemma:LemmaX} to see that $V = \partial W$, for a normal neighborhood $W$.  Additionally, $f(W)$ is homeomorphic to a ball of dimension $k+1$.  Since $W$ is a normal neighborhood, the set $f(V)$ is homeomorphic to a sphere of dimension $k$.
\end{proof}

We now prove the main proposition needed for the proof of Theorem \ref{thm:maintheorem}. 
It captures the fact that for open and discrete maps with $f(B_f)$ contained in an $(n-2)$-simplicial complex,
the branching should occur ``tangentially", i.e., inside the boundaries of normal domains.
Some of the steps of the proof are described in Figure \ref{fig:RadialLifts}.
\begin{proposition}\label{prop:UniqueLiftsTopDim} 
  Let $\Omega \subset \R^n$ be a domain and $f \colon \Omega \to \R^n$ be an open and discrete map such that
  $f(B_f)$ is contained in an $(n-2)$-simplicial complex. Then for any
  $x_0 \in \Omega$, there exists a sufficiently small $r < r(x_0,f)$ so that for $v \in \bS^{n-1}$, the
  path
  \begin{align*}
    \beta \colon [0,r] \to \overline{B}(f(x_0),r),
    \quad
    \beta(t) = (r-t) v + f(x_0)
  \end{align*}
  has a unique lift starting from any point
  $z_0 \in \overline{U}(x_0,f,r) \cap f \inv (\{ \beta (0) \})$.
\end{proposition}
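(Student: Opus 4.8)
The plan is to fix $x_0 \in \Omega$, write $y_0 \colonequals f(x_0)$, and --- using Lemma \ref{lemma:LemmaX} and the local conicality of simplicial complexes --- to choose a radius $r < r(x_0,f)$ and coordinates near $y_0$ so that $U_r \colonequals U(x_0,f,r)$ is a normal neighborhood of $x_0$ and $f(B_f) \cap \overline{B}(y_0,r)$ lies in a simplicial $(n-2)$-complex $E$ which is the cone, with apex $y_0$, over an $(n-3)$-complex $C \subseteq \partial B(y_0,r)$. Existence of a lift of $\beta$ from any admissible $z_0$ is immediate from path lifting for branched covers (\cite[Chapter~II.3]{Rickman-book}); every such lift $\tilde\beta\colon[0,r]\to\overline B(y_0,r)$ in fact takes values in $\overline{U_r}$ and ends at $\tilde\beta(r)=x_0$, because $\overline{U_r}\cap f\inv\{y_0\}=\{x_0\}$. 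Thus only uniqueness is at stake, and any two lifts from $z_0$ automatically agree at $t=0$ and at $t=r$. I would first dispose of the directions $v$ with $rv+y_0\notin C$, so that $\beta([0,r])$ meets $E$ only at $y_0$: then $\beta((0,r))$ avoids the compact set $Z\colonequals f(B_f\cap\overline{U_r})$, the restriction of $f$ to $U_r\setminus f\inv(Z)$ is a proper local homeomorphism onto $B(y_0,r)\setminus Z$, an open (hence locally path-connected) and, by the non-separation part of Theorem \ref{thm:CernavskiiVaisala}, connected set, hence a covering map; since $z_0\notin B_f$ here, two lifts agree on an initial interval and therefore on all of $(0,r)$ by uniqueness of lifts for coverings, and then on $[0,r]$ by continuity.

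The heart of the matter is the \emph{tangential} behavior of the branching, which handles the remaining directions. The key observation is: if $rv+y_0\in C$ and $\tau$ is its carrier (the smallest simplex of $C$ containing it), then the open segment $\beta((0,r))$ lies in the relative interior of the single simplex $y_0*\tau$ of $E$; the radial lines from the apex are tangent to the simplices of $E$ that they traverse. When $\tau$ is top-dimensional in $C$, the simplex $y_0*\tau$ is $(n-2)$-dimensional, so $E$ --- and therefore $f(B_f)$ --- coincides with a fixed affine $(n-2)$-plane $\Pi$ in a neighborhood of each point $\beta(t_0)$, $t_0\in[0,r)$ (including $t_0=0$, as $rv+y_0$ lies in the relative interior of the top-dimensional $\tau$). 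By the Martio--Rickman--V\"ais\"al\"a theorem \cite{MRV1971, ChurchHemmingsen1}, $f$ is then, near each point of $f\inv(\beta(t_0))$, topologically equivalent to a winding map with branch image $\Pi$; since $\beta$ runs inside $\Pi$, such a map has unique local lifts along $\beta$, whether $\beta(t_0)$ lies off $f(B_f)$ (local homeomorphism) or on it. A connectedness argument --- the set of times at which two given lifts agree is closed, contains $0$ and $r$, and is open by this local uniqueness --- then yields uniqueness on $[0,r]$ for these $v$.

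There remain the directions $v$ for which the carrier $\tau$ of $rv+y_0$ is a lower-dimensional face of $C$, so $E$ is not locally planar along $\beta$; these $v$ form a set of codimension at least two in $\bS^{n-1}$. For them I would argue by induction on $n$. The base case $n=2$ is trivial: $f(B_f)$ is then a finite set of points, so $f(B_f)\cap B(y_0,r)\subseteq\{y_0\}$ after shrinking $r$, and the first case above applies. For the inductive step, suppose two lifts of $\beta$ from $z_0$ agree up to a last time $t^*<r$ at a point $z^*$; if $z^*\notin B_f$ then $f$ is a local homeomorphism there and the lifts agree past $t^*$, a contradiction, so $z^*\in B_f$ and $y^*\colonequals\beta(t^*)\in f(B_f)$. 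Passing to a small normal neighborhood $U^*\colonequals U(z^*,f,\delta)$, Lemmas \ref{lemma:MappingAtTheBoundary} and \ref{lemma:LemmaX} make $f|_{\partial U^*}\colon\partial U^*\to\partial B(y^*,\delta)$ an $(n-1)$-dimensional branched cover whose branch image is again contained in a simplicial $(n-3)$-complex; I would then use the inductive statement for $f|_{\partial U^*}$ to control how the two lifts of $\beta$ behave as $\beta$ crosses the sphere $\partial B(y^*,\delta)$ near $y^*$, forcing them to agree slightly past $t^*$ --- the desired contradiction.

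The main obstacle is exactly this last reduction: the path $\beta$ is transverse to the spheres $\partial B(y^*,\delta)$ rather than radial within them, so transporting the lower-dimensional lifting statement for $f|_{\partial U^*}$ back to a statement about $\beta$ requires first establishing a local ``radial product'' description of $f$ near a branch point --- a local version of the conclusion of Theorem \ref{thm:maintheorem} --- and organizing the induction so that it descends through the nested restrictions to boundaries of normal domains all the way to dimension one. This is precisely where the hypothesis that $f(B_f)$ is simplicial is used essentially, the trace of the branch image on each relevant sphere being again simplicial of one lower dimension; the Church--Timourian example of Section \ref{sec:Nemesis} shows the conclusion fails without it.
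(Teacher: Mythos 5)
Your first two cases are essentially sound: directions $v$ for which $\beta((0,r))$ misses the branch image reduce to unique lifting for a covering map, and directions for which $\beta$ runs through the relative interior of a top-dimensional simplex of the cone complex reduce, via Martio--Rickman--V\"ais\"al\"a \cite{MRV1971,ChurchHemmingsen1}, to unique lifting for a winding map along a path inside its branch plane. But the third case --- directions $v$ whose carrier in $C$ is a lower-dimensional face, so that near $\beta(t_0)$ the set $f(B_f)$ is a union of several $(n-2)$-simplices meeting along a lower-dimensional stratum and is \emph{not} contained in a single affine $(n-2)$-plane --- is exactly where MRV fails and where the real content of the proposition lies, and your treatment of it is not a proof. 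As you yourself note, the induction you sketch does not close: the restriction $f|_{\partial U^*}$ controls lifts of paths lying \emph{within} the sphere $\partial B(y^*,\delta)$, whereas $\beta$ crosses that sphere transversally, and bridging this requires a local radial-product (cone) description of $f$ near $z^*$. That description is essentially the conclusion of Theorem \ref{thm:maintheorem}, whose proof in the paper rests on this proposition (via Propositions \ref{prop:UniqueLifts}, \ref{prop:HomeomorphismFoliation} and Lemma \ref{lemma:LocallyPL}), so invoking it here is circular. Since the later constructions need unique lifts for \emph{every} $v\in\bS^{n-1}$, the exceptional set cannot be discarded, and the proposal as it stands is incomplete.

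For contrast, the paper avoids any stratification by direction and argues uniformly by contradiction: if two lifts $\alpha_1,\alpha_2$ split at $z_0$, one lifts a short segment $\zeta$ emanating from $\beta(s_0)$ into the two disjoint normal neighborhoods $U(\alpha_1(s_0))$, $U(\alpha_2(s_0))$, joins the endpoints by an arc outside $f\inv(f(B_f))$, and then contracts the resulting loop to $\beta(s_0)$ by the straight-line homotopy. The key observation is a \emph{double radiality}: $f(B_f)$ is radial both about $f(x_0)$ (by the choice of $r$) and about $f(z_0)$ (by shrinking the normal neighborhood of $z_0$), and these two facts together force it to be radial about the intermediate point $\beta(s_0)$, so the contraction never meets $f(B_f)$. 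Lifting the contraction then deforms a connected curve into two disjoint loops, a contradiction. You may want to compare your tangential-branching heuristic with this argument; the homotopy device is what lets the paper handle the lower-dimensional strata that defeat the MRV reduction.
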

\begin{proof}
  Choose $r$ small enough so that $f(B_f) \cap B(f(x_0),r)$ is contained in a codimension-2 radial set.  That is, there exists an $(n-2)$-simplicial complex $D$ containing $f(B_f)$, such that $D \cap B(x_0,r_1) = \frac{r_1}{r_2} (D \cap B(x_0,r_2))$.
  
  Suppose towards contradiction that the claim is false.
  Then there exist two different lifts of $\beta$, say
  $\alpha_1, \alpha_2 \colon [0,r] \to \overline{U}(x_0,f,r)$ satisfying,
  \begin{align*}
    \alpha_1(0) = \alpha_2(0) = z_0 \quad \text{and} \quad \alpha_1(s_0) \neq \alpha_2(s_0),
  \end{align*}
  for some $s_0 \in (0,r)$. Set
  \begin{align*}
    t_0 = \inf \{ t \in [0,r] \mid \alpha_1(t) \ne \alpha_2(t) \}.
  \end{align*}
  So $\alpha_1(t) = \alpha_2(t)$ for all $t \in [0,t_0]$, but for $s \in (t_0,t_0 + \epsilon)$ for small $\epsilon$, $\alpha_1(s) \ne \alpha_2(s)$.
  Without loss of generality we may assume that $t_0 = 0$ and that 
  \begin{align*}
    \alpha_1(t_0) = \alpha_2(t_0) = z_0.
  \end{align*}
  (see top part of Figure \ref{fig:RadialLifts}).
  
  Fix a radius
  $R < r(z_0,f)$ such that $\overline{B}(f(z_0),R) \subset B(f(x_0),r(x_0,f))$ (see middle part of Figure \ref{fig:RadialLifts}).
  Let $s_0 \in (t_0, t_0 + \epsilon)$, we may assume that $\epsilon$ is sufficiently small so that $\beta(s_0) \in B(f(z_0),R)$.
  We now let $U(\alpha_1(s_0))$ and $U(\alpha_2(s_0))$ be normal neighborhoods of $\alpha_1(s_0)$ and $\alpha_2(s_0)$ respectively.
  Let $\zeta$ be a line segment that has one endpoint at $\beta(s_0)$ and intersects $f(B_f)$ only at $\beta(s_0)$.  Additionally, suppose that $\zeta$ is small so that 
  \begin{align*}
    \zeta \subset fU(\alpha_1(s_0))\cap fU(\alpha_2(s_0)).
  \end{align*}
  Since everything is contained in the image of normal neighborhoods we can lift $\zeta$ to $\gamma_1 \subset U(\alpha_1(s_0))$ and $\gamma_2 \subset U(\alpha_2(s_0))$
  from the points $\alpha_1(s_0)$ and $\alpha_2(s_0)$, respectively 
  -- note though that these lifts might not be unique.  Let $\gamma_3$ be a path connecting $\gamma_1$ and $\gamma_2$ that lies outside of $f^{-1}(f(B_f))$.  The path $f(\gamma_1\cup\gamma_2\cup\gamma_3)$ will be a loop based at $\beta(s_0)$ that consists of a line segment and a loop.  The loop will lie outside of $f(B_f)$ (see bottom part of Figure \ref{fig:RadialLifts}).

By Lemma \ref{lemma:TopologicalNormalDomainLemma}, for any sufficiently small radius $\rho$, there exists a normal neighborhood of $z_0$ that maps onto $B(f(z_0),\rho)$.
So we may choose a sufficiently small normal neighborhood
of $z_0$, which we denote by $V$, for which image of the branch set in $V$ is also radial
with respect to $f(z_0)$. The point $\beta(s_0)$ lies on a path
between $f(z_0)$ and $f(x_0)$ so the image of the branch set will be radial at $\beta(s_0)$
with respect to small enough normal neighborhoods as well.
Indeed, for any $w \in B(f(z_0),R)$ that does not lie in the simplicial complex containing $f(B_f)$, the line segments
$[w,f(z_0)]$ and $[w,f(x_0)]$ will not belong to the simplicial complex that contains the image of the branch set (not including $f(z_0)$ and $f(x_0)$).
Additionally, for each $w' \in [w,f(z_0)]$, the line $[w',f(x_0)]$ will not be in the simplicial complex containing $f(B_f)$ and
so we conclude that the simplicial complex containing $f(B_f)$ does not intersect $[w,\beta(s_0)]$ except at $\beta(s_0)$.

Define a homotopy that consists of the straight line from each point in $f(\gamma_1\cup\gamma_2\cup\gamma_3)$ to $\beta(s_0)$.
Due to the argument mentioned in the previous paragraph, we see that the
homotopy will take $f(\gamma_1\cup\gamma_2\cup\gamma_3)$  to an arbitrarily
small neighborhood of $\beta(s_0)$ without intersecting $f(B_f)$.  Additionally, the end loop will be contained in the image of the normal neighborhoods, $U(\alpha_1(s_0))$ and $U(\alpha_2(s_0))$.

The homotopy can be lifted uniquely since it avoids $f(B_f)$.  The resulting loop of the homotopy will be lifted separately to the normal neighborhoods of $\alpha_1(s_0)$ and $\alpha_2(s_0)$.  This gives a homotopy from a connected curve to two disconnected loops, which is a contradiction.
\end{proof}

\begin{figure} 
  \centering
  \includegraphics[width=1\textwidth]{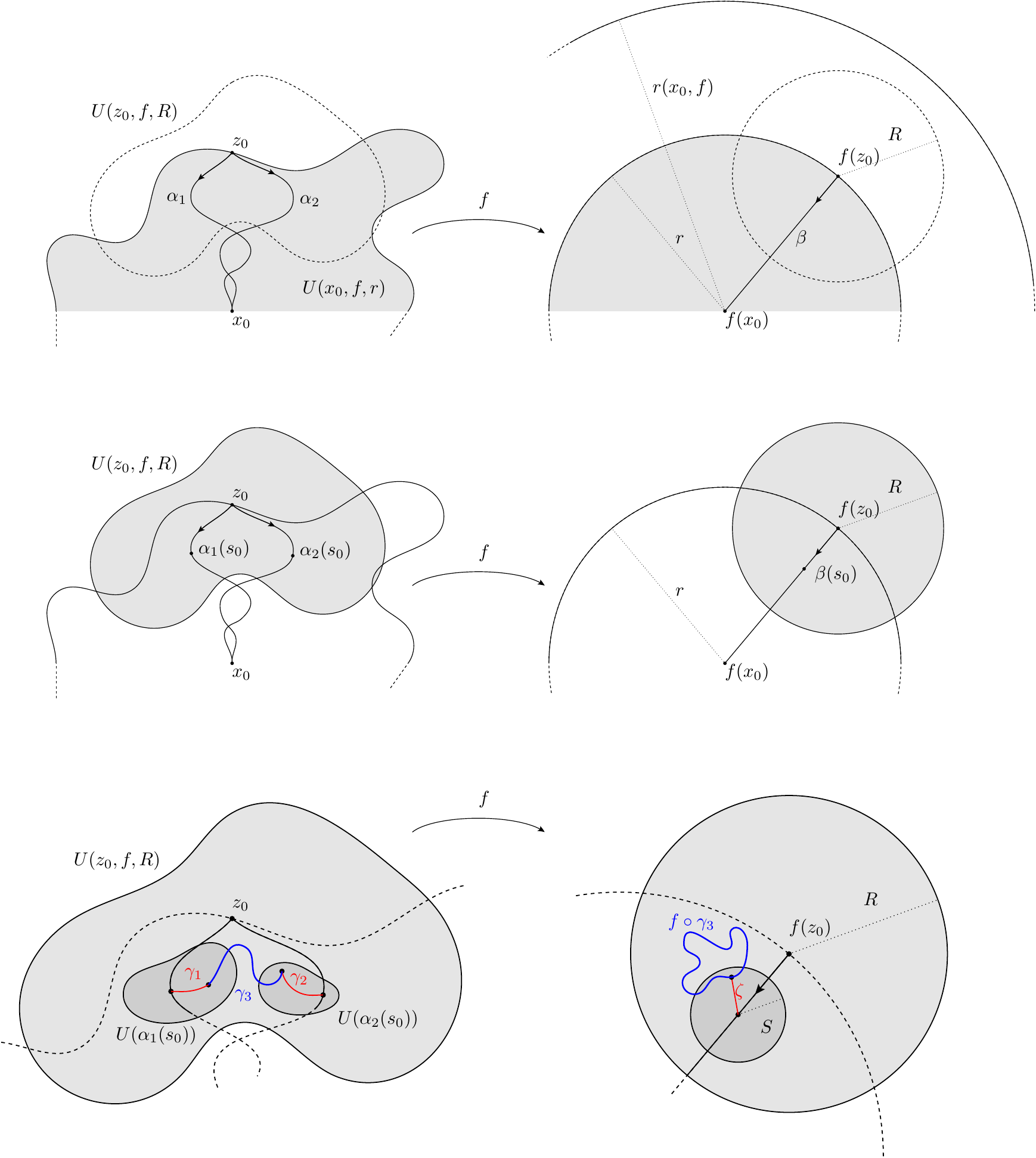}
  \caption{Showing that radial lifts are unique.}
  \label{fig:RadialLifts}
\end{figure}

The previous proposition allows us to uniquely lift radial paths in normal neighborhoods.
We would also like to be able to lift radial paths uniquely in lower dimensional normal neighborhoods $U \subset V$ with $V \in \mathcal{U}_k$ for any $k$.

Let $f \colon \Omega \to \R^n$ be an open and discrete map such that $f(B_f)$ is contained in an $(n-2)$-simplicial complex.
Let $x_0 \in V \in \mathcal{U}_k$.  By Lemma \ref{lemma:ImageIsASphere}, $f(V) \cong \bS^k$.  Up to homeomorphism we can assume that $f(V)$ minus a point maps to a $k$-dimensional plane.  In this case $f|_V$ will have the image of the branch set contained in a $(k-2)$-simplicial complex.

\begin{proposition}\label{prop:UniqueLifts}
Let $\Omega \subset \R^n$ be a domain and $f \colon \Omega \to \R^n$ be an open and discrete map such that
  $f(B_f)$ is contained in an $(n-2)$-simplicial complex.
  Let $V \in \mathcal{U}_k$.
For any
  $x_0 \in V$, there exists a sufficiently small $r < r(x_0,f|_v)$ so that for $v \in \bS^{n-1}$, the
  path
  \begin{align*}
    \beta \colon [0,r] \to \overline{B}(f(x_0),r),
    \quad
    \beta(t) = (r-t) v
  \end{align*}
  has a unique lift starting from any point
  $z_0 \in \overline{U}(x_0,f|_V,r) \cap f|_V^{-1} (\{ \beta (0) \})$ that is contained in $V$.
\end{proposition}
\begin{proof}
By Proposition \ref{prop:UniqueLiftsTopDim} we know that $\beta$ has a unique lift in $\Omega$ starting from any preimage of $\beta(0)$.
Thus we only need to show that such a lift is contained $V$.
By assumption, $x_0$ is a preimage of $\beta(0)$ in $V$ and the lift of $\beta$ under $f|_V$ starting from this preimage is contained in $V$. 
\end{proof}

\begin{proposition}\label{prop:HomeomorphismFoliation}
  Let $\Omega \subset \R^n$ be a domain and $f \colon \Omega \to \R^n$ an open and discrete map
  with $f(B_f)$ contained in an $(n-2)$-simplicial complex. 
  Suppose $k = n-1, \ldots, 2$ and $W \in \mathcal{U}_k$.
  Then for any $x_0 \in W$ and all normal domains
  $U(x_0,f|_W,r)$ with $r <  r_0 \colonequals r(x_0,f)$ (as in Lemma \ref{lemma:LemmaX})
  there exists a parameterized collection of 
  homeomorphisms 
  \begin{align*}
    h_t \colon \partial U(x_0,f|_W,r_0) \to \partial U(x_0,f|_W,t),
  \end{align*}
  $t \in (0,r_0)$ such that the mapping
  \begin{align*}
    H \colon (0, r_0) \times \partial U(x_0,f|_W,r_0) \to U(x_0, f|_W, r_0) \setminus \{ x_0 \},
\end{align*}
\begin{align*}
    H(t,x) = h_t(x) 
  \end{align*}
  is also a homeomorphism and $U(x_0,f|_W,r_0) \cong \cone(\partial U(x_0,f|_W,r_0))$.
\end{proposition}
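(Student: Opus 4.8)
The plan is to produce $H$ from unique lifts of radial segments, using Propositions~\ref{prop:UniqueLiftsTopDim} and~\ref{prop:UniqueLifts}. First I would normalize as in the discussion preceding Proposition~\ref{prop:UniqueLifts}: composing with a homeomorphism we may assume $f(W)$ minus a point equals $\R^k$, that $f(x_0)=0$, and — shrinking $r_0$ if necessary — that $f(B_{f|_W})\cap B(0,r_0)$ is radial, i.e.\ contained in a finite union of $(k-2)$-dimensional linear subspaces, with $U(x_0,f|_W,r_0)$ a normal neighborhood. For $z\in\partial U(x_0,f|_W,r_0)$ set $v_z=f(z)/r_0\in\bS^{k-1}$, let $\beta_z(s)=(r_0-s)v_z$ for $s\in[0,r_0]$, and let $\tilde\beta_z$ be its unique lift with $\tilde\beta_z(0)=z$. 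I then define $H(t,z)=\tilde\beta_z(r_0-t)$ and $h_t=H(t,\cdot)$. Two bookkeeping observations: the tail $\tilde\beta_z|_{[r_0-t,r_0]}$ joins $\tilde\beta_z(r_0-t)$ to $x_0$ through $U(x_0,f|_W,t)$ while $f(\tilde\beta_z(r_0-t))\in\partial B(0,t)$, so $h_t$ maps into $\partial U(x_0,f|_W,t)$; and $\overline U(x_0,f|_W,r_0)\setminus\{x_0\}=\bigsqcup_{t\in(0,r_0]}\partial U(x_0,f|_W,t)$, the $t$-value of a point $w$ being $|f(w)|$.

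Next I would establish the three properties of $H$. For continuity, note that $\beta_z$ depends continuously on $z$; since the lifts are unique, an Arzel\`a--Ascoli/normal-families argument combined with the uniqueness in Proposition~\ref{prop:UniqueLiftsTopDim} shows $z_n\to z$ forces $\tilde\beta_{z_n}\to\tilde\beta_z$ uniformly, whence $H$ is continuous; using Lemma~\ref{lemma:TopologicalNormalDomainLemma} one sees $H$ extends continuously to $[0,r_0]\times\partial U(x_0,f|_W,r_0)$ with $H(0,\cdot)\equiv x_0$ and $H(r_0,\cdot)=\id$. For surjectivity onto $U(x_0,f|_W,r_0)\setminus\{x_0\}$: given such a $w$, put $t=|f(w)|\in(0,r_0)$ and lift the outward radial segment from $f(w)$ to $\partial B(0,r_0)$ starting at $w$ (path lifting for branched covers, as in the proof of Lemma~\ref{lemma:MappingAtTheBoundary}); the lift stays in $U(x_0,f|_W,r_0)$, ends at some $z\in\partial U(x_0,f|_W,r_0)$, and its reverse is forced by uniqueness to be $\tilde\beta_z|_{[0,r_0-t]}$, so $w=H(t,z)$.

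The step I expect to be the main obstacle is injectivity of $H$, i.e.\ ruling out that two distinct radial lifts merge before reaching $x_0$. If $H(t_1,z_1)=H(t_2,z_2)=w$, comparing images gives $t_1=t_2=|f(w)|\equalscolon t$ and $v_{z_1}=v_{z_2}$, so $\tilde\beta_{z_1}$ and $\tilde\beta_{z_2}$ lift one and the same segment $\beta$ and agree at parameter $r_0-t$; applying Proposition~\ref{prop:UniqueLiftsTopDim} to the inner part of $\beta$ shows their agreement set is an interval $[a,r_0]$, and the task is to force $a=0$. If the common direction $v_{z_1}$ is \emph{generic}, then $\beta((0,r_0))$ misses $f(B_{f|_W})$ entirely (a radial ray meets each radial $(k-2)$-plane only at $0$), so all lifts of $\beta$ are pairwise disjoint on $(0,r_0)$ and $z_1=z_2$; this already gives injectivity of each $h_t$ on the dense preimage of the generic directions. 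The remaining directions lie in one of the radial planes $\Pi$, in which case $\beta$ lies in $\Pi\subseteq f(B_{f|_W})$ near each of its points, so near a common value $q\neq x_0$ the branch image is locally contained in a $(k-2)$-affine set and $f|_W$ is locally modeled on a winding map \cite{MRV1971, ChurchHemmingsen1}, forcing the outward radial lift from $q$ to be unique and contradicting $a>0$. (It may be cleaner to instead run the proposition as an induction on $k$: for $k=2$ the radial branch image is just $\{0\}$ so every direction is generic, and for $k>2$ one handles the exceptional directions by approximating them with generic ones and passing to the limit.) Granting injectivity, each $h_t\colon\partial U(x_0,f|_W,r_0)\to\partial U(x_0,f|_W,t)$ is a continuous bijection of compact Hausdorff spaces, hence a homeomorphism, and $H$ is a continuous bijection with continuous inverse $w\mapsto(|f(w)|,h_{|f(w)|}^{-1}(w))$.

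Finally, the extended map $[0,r_0]\times\partial U(x_0,f|_W,r_0)\to\overline U(x_0,f|_W,r_0)$ collapses exactly $\{0\}\times\partial U(x_0,f|_W,r_0)$ to $x_0$, so (after rescaling $[0,r_0]$ to $[0,1]$) it factors through a continuous bijection $\cone(\partial U(x_0,f|_W,r_0))\to\overline U(x_0,f|_W,r_0)$; the cone is compact and the target Hausdorff, so this map is a homeomorphism, giving $U(x_0,f|_W,r_0)\simeq\cone(\partial U(x_0,f|_W,r_0))$.
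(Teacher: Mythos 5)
Your construction is the paper's: define $h_t$ via the unique radial lifts of Propositions~\ref{prop:UniqueLiftsTopDim}--\ref{prop:UniqueLifts}, prove continuity by a compactness-plus-uniqueness argument on converging segments (the paper uses Hausdorff convergence of the lifted arcs rather than Arzel\`a--Ascoli, which sidesteps the equicontinuity you would otherwise need to justify), and conclude the cone structure from the continuous bijection $[0,r_0]\times\partial U_{r_0}\to\overline U_{r_0}$. Your treatment of surjectivity and of the final cone identification is fine, and you are right that injectivity is the point the paper compresses (into the phrase ``canonical inverse map \dots defined symmetrically'').

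The one genuine weak spot is your injectivity argument for the non-generic directions. You assert that near a merge point $q=\beta(a)\neq 0$ the branch image lies in a single $(k-2)$-dimensional affine set, so that \cite{MRV1971,ChurchHemmingsen1} applies. But a point $q\neq 0$ of the ray $\R_{>0}v$ lies on exactly those radial $(k-2)$-planes of the (finite) collection that contain the whole ray, and for $k\geq 4$ there may be two or more such planes; then all you know near $q$ is that $f(B_{f|_W})$ is contained in a \emph{union} of several $(k-2)$-planes through $q$, and the Church--Hemmingsen/MRV hypothesis fails. (Also, $\beta\subset\Pi\subseteq f(B_{f|_W})$ is backwards: the branch image is contained in the union of the planes, not conversely.) The parenthetical fallback of approximating by generic directions does not close this, since injectivity does not pass to limits of the $z$'s. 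The correct repair -- and what the paper implicitly relies on -- is to observe that two lifts merging at $q'$ over $q$ and separating immediately outward is exactly the configuration excluded in the proof of Proposition~\ref{prop:UniqueLiftsTopDim}, applied to the time-reversed (outward) segment: that loop-contraction argument needs only that $f(B_f)$ is locally a cone with vertex $q$, which holds for a finite union of radial planes through the ray, not that it lie in a single plane. With that substitution your proof is complete.
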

\begin{proof}
  For $t \in (0,r_0)$ and any given point  $x \in \partial U(x_0,f|_W,t)$, we 
  define the homeomorphism $h_{t}$ to map $x$ to the endpoint of the
  unique lift, guaranteed by Proposition \ref{prop:UniqueLifts}, of the straight line segment containing
  $f(x)$ that has $f(x_0)$ as an endpoint and another endpoint in $B(f(x_0),r_0)$.  Since the lifts are unique, $h_t$ is well-defined. 
  
  Let $y \in \partial U(x_0,f|_W,t)$.  Then $f(y) \in B(f(x_0),t)$ and there is a unique lift of the straight line segment from $f(x_0)$ through $f(y)$ to $B(f(x_0),r_0)$.  The definition of $h_t$ gives that if $x$ is the endpoint of the unique lift, then $h_t(x) = y$.  So the map $h_t$ is surjective.
  It is injective since otherwise the lifts of two disjoint line segments would intersect. So there exists an 
  inverse map for $h_{t}$. Since these two maps can be both defined by lifts of line segments, it suffices
  to show that $h_t$ is continuous to prove the claim.

  Suppose that there exists a sequence $\{a_j\}_{j \in \mathbb N}$ such that $a_j \in \partial U(x_0,f|_W,r_0)$ and $\lim_{j \to \infty} a_j = a \in \partial U(x_0,f|_W,r_0)$.
  This would imply
  that there is a radial line segment $I$, connecting $f(a)$ to $f(x_0)$, together with a sequence $(I_j)$ of line segments, connecting $f(a_j)$ to $f(x_0)$,
  converging to $I$.  We must show that the unique lifts $\alpha_j$ of $I_j$ converge to the unique lift $\alpha$
  of $I$. If we consider the paths $\alpha_j$ as sets in $\R^n$, the compactness of the Hausdorff metric (see e.g.\ \cite[pp.\,70--77]{BridsonHaefliger}) implies that $\{\alpha_j\}_{j \in \mathbb N}$ must have a converging subsequence.  So by taking a subsequence suppose that $\lim_{j \to \infty} \alpha_j = \beta$.  Additionally, $\beta$ will be connected since for each $j \in \mathbb N$, $\alpha_j$ is connected.
  
  We can parametrize the $I_j$ by a time parameter $t$ in the obvious way.  Similarly, we can parametrize $\alpha_j$ so that $f\circ \alpha_j(t) = I_j(t)$.
  By \cite[Lemma 5.32]{BridsonHaefliger}, for every $x \in \beta$, there exists a sequence $\{\alpha_j(t)\}_{j\in\mathbb N}$ so that $\lim_{j \to \infty} \alpha_j(t_j) = x$.  Since $f$ is continuous and $f(\alpha_j(t_j)) \in I_j$, we have that $f(x) \in I$.  So $\beta \subset \alpha$.  Note that $\beta$ cannot be contained in a different preimage of $I$ by $f$ since $\lim_{j \to \infty} a_j = a \in \alpha$ and $\beta$ is connected.
  
  If $x \in \alpha$, then there exists a sequence of points $y_j \in I_j$ such that $\lim_{j \to \infty} y_j = f(x)$.  The point $y_j$ has a unique preimage $x_j \in \alpha_j$ for all $j \in \mathbb N$. By \cite[Lemma 5.32]{BridsonHaefliger} there exists a subsequence $\{x_{j_k}\}_{k \in \mathbb N}$ such that $\lim_{k \to \infty} x_{j_k} = x' \in \beta \subset \alpha$.  So $f(x') \in I$ and by uniqueness of lifts we have that $x = x'$.  This gives that $\beta = \alpha$.  The argument shows that every subsequence of $\{\alpha_j\}_{j \in \mathbb N}$ must limit to $\alpha$ and so $\lim_{j \to \infty} h_t(a_j) = h_t(a)$, which gives that $h_t$ is continuous.
  
  Finally, it is straightforward to check that $H$ is also a homeomorphism, which
  implies that $U(x_0,f|_W,r_0) = \cone(\partial U(x_0,f|_W,r_0))$.
\end{proof}

The previous Proposition \ref{prop:HomeomorphismFoliation} shows that we can foliate the
small punctured lower dimensional normal domains with their boundaries. Note that
this does not a priori imply that the boundaries are spheres, see again the example
in Section \ref{sec:Nemesis}.

\subsection{Boundaries of normal domains are homeomorphic to spheres}

We wish to show that the boundary of a normal domain is homeomorphic
to a sphere for an open and discrete map $f$ with $f(B_f)$ contained in an
$(n-2)$-simplicial complex. The proof is based on an inductive argument
on the dimension of the lower dimensional normal domains.
Most of the complications in the statements and proofs of the following proposition arise from the fact that we
need to study the restriction of $f$ to the boundary of a normal domain before
showing that the boundary is a manifold.

We first compute the homology groups of the boundary of a lower dimensional normal neighborhood.

\begin{lemma}\label{lemma:homologyequality}
  Fix $k \in \{2,\dots,n-1\}$.  Let $U$ be a normal neighborhood of dimension $k$ centered at a point $x \in \R^n$.
  Let also $\partial U = V \in \mathcal{U}_k$.  If $U$ is sufficiently small, then
  \begin{align*}
    H_l(V) = H_l(\bS^k)
  \end{align*}
  for $0 \le l \leq k$, where $H_l$ is the singular homology group.
\end{lemma}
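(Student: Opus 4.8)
The plan is to run a downward induction on $k$, using the cone/foliation structure from Proposition \ref{prop:HomeomorphismFoliation} together with a Mayer--Vietoris (or excision) argument on the ambient normal domain. Fix $k\in\{2,\dots,n-1\}$ and let $U=U(x,f|_W,r_0)$ be a $k$-dimensional normal neighborhood with $\partial U=V\in\mathcal{U}_k$; by Lemma \ref{lemma:ImageIsASphere} we know $f(V)\simeq\bS^k$ and, after a homeomorphism, $f(U)\simeq\cone(f(V))$ is a closed $k$-ball $\overline B$ with center $f(x)$. The key structural input is that, by Proposition \ref{prop:HomeomorphismFoliation}, $U\setminus\{x\}\simeq (0,r_0)\times V$, hence $U\setminus\{x\}$ deformation retracts onto $V$ and in particular $H_l(U\setminus\{x\})\cong H_l(V)$ for all $l$. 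So it suffices to compute $H_l(U\setminus\{x\})$, or equivalently the local homology $H_l(U,U\setminus\{x\})$, and conclude that $U$ has the homology of a point (so that $V$ gets the homology of $\bS^k$ from the long exact sequence of the pair, since $H_l(U,U\setminus\{x\})\cong \widetilde H_{l-1}(V)$ and this must vanish except in degree $k$).

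First I would establish that $U$ itself is acyclic, i.e. $\widetilde H_l(U)=0$ for all $l$. This is the heart of the matter and follows the same idea as \cite[Lemma I.4.9]{Rickman-book}: $f|_U\colon U\to \overline B$ is a proper branched cover onto a ball, the branch image $f(B_f)\cap \overline B$ is (after a homeomorphism) a radial cone on an $(n-3)$-complex intersected with $\bS^{k-1}$, hence $f(B_f)\cap \overline B$ is itself a cone with vertex $f(x)$, and by Proposition \ref{prop:HomeomorphismFoliation} the radial lines lift; contracting $\overline B$ radially to $f(x)$ lifts to a contraction of $U$ to $x$. Thus $U$ is contractible, so acyclic. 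Then the long exact sequence of $(U,U\setminus\{x\})$ reads $\widetilde H_l(U\setminus\{x\})\to \widetilde H_l(U)=0\to H_l(U,U\setminus\{x\})\to \widetilde H_{l-1}(U\setminus\{x\})\to 0$, giving $H_l(U,U\setminus\{x\})\cong \widetilde H_{l-1}(U\setminus\{x\})\cong\widetilde H_{l-1}(V)$.

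It therefore remains to identify the local homology group $H_l(U,U\setminus\{x\})$ with $H_l(\bS^k,\bS^k\setminus\{\mathrm{pt}\})$, i.e. with $\Z$ in degree $k$ and $0$ otherwise. Here I would use excision together with the branched-cover structure: by taking $U$ small I may pass to the germ at $x$, and by the inductive hypothesis applied to the $(k-1)$-dimensional normal neighborhoods inside $W'\in\mathcal U_k$ (whose boundaries lie in $\mathcal U_{k-1}$) I know that links of $x$ in $U$ at the next level down already have the homology of $\bS^{k-1}$. Combined with the cone decomposition $U\simeq\cone(V)$ and $U\setminus\{x\}\simeq(0,r_0)\times V$, the pair $(U,U\setminus\{x\})$ is homotopy equivalent to $(\cone(V), V)$, whose relative homology is $\widetilde H_{*-1}(V)$ — which is consistent but circular unless we pin down one more piece of data. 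The non-circular input is degree-counting in top dimension: $f|_U$ has a well-defined local degree $i(x,f)$, the preimage of a generic point near $f(x)$ has exactly that many points, and a standard transfer/Vietoris–Begle argument (using that $f(B_f)$ does not locally separate, Theorem \ref{thm:CernavskiiVaisala}) shows $H_{k-1}(V)\to H_{k-1}(f(V))=H_{k-1}(\bS^{k-1})$ is surjective and hence $H_{k-1}(V)\neq 0$; connectedness of $V$ (from $f|_V$ being a branched cover onto a connected sphere) gives $H_0(V)=\Z$, and the acyclicity of $U$ forces all intermediate groups to vanish. The main obstacle I anticipate is precisely this top-degree step: showing $H_{k-1}(V)=\Z$ (not just nonzero) without already knowing $V$ is a manifold — I would handle it by the local-degree argument, i.e. $f|_U$ restricted to the complement of $f^{-1}(f(B_f))$ is a genuine covering of fixed degree $i(x,f)$ over a punctured ball, and the Mayer–Vietoris sequence relating $U\setminus f^{-1}(f(B_f))$, a neighborhood of $f^{-1}(f(B_f))$, and their overlap — together with the fact (Theorem \ref{thm:CernavskiiVaisala}) that $f(B_f)$ has codimension $\ge 2$ in the $\bS^k$ worth of image so it contributes nothing in degrees $k-1$ and $k$ — pins $H_{k-1}(V)$ down to a single copy of $\Z$.
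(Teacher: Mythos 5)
Your reduction via the cone structure ($U\simeq\cone(V)$, $U\setminus\{x\}\simeq V\times(0,1)$, long exact sequence of the pair) is fine as far as it goes, but it leaves a genuine gap at the step you dismiss with ``the acyclicity of $U$ forces all intermediate groups to vanish.'' It does not: $\cone(V)$ is contractible for \emph{every} $V$, so acyclicity of $U$ carries no information about $\widetilde H_l(V)$ for $0<l<k$; the long exact sequence merely converts the problem into computing the local homology $H_*(U,U\setminus\{x\})$, which is exactly as unknown as $H_*(V)$ because $U$ is not yet known to be a manifold. The vanishing of the intermediate groups is the main content of the lemma, and your plan contains no mechanism for it. Your one proposed non-circular input, the degree/transfer computation of the top group, is also shaky: you would be running a Mayer--Vietoris argument around $f^{-1}(f(B_f))\cap V$ inside a space $V$ that you do not know to be a manifold, so you have no dimension-theoretic or duality control over that set in $V$ (the \v{C}ernavskii--V\"ais\"al\"a codimension bound is a statement about maps between manifolds). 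There are also index slips: the top degree is $k$, not $k-1$, and $f(V)\simeq\bS^{k}$, not $\bS^{k-1}$.

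The idea the paper uses, and which your plan is missing, is to escape from the non-manifold $V$ by thickening: iterating Proposition \ref{prop:HomeomorphismFoliation} through the nested normal domains produces open subsets of $\R^n$ homeomorphic to $U\times(0,1)^{n-k-1}$ and to $V\times(0,1)^{n-k}\simeq \bigl(U\times(0,1)^{n-k-1}\bigr)\setminus\bigl(\{x\}\times(0,1)^{n-k-1}\bigr)$. Now everything is an honest manifold, indeed an open subset of $\R^n$: a smoothing and transversality argument shows that a sphere of dimension $l<k$ in the contractible set $U\times(0,1)^{n-k-1}$ can be contracted while avoiding the $(n-k-1)$-dimensional set $\{x\}\times(0,1)^{n-k-1}$, giving $\pi_l(V)=0$ and hence, by Hurewicz, the vanishing of the intermediate homology; and a Mayer--Vietoris argument in $\R^n$ with $L=\{x\}\times\R^{n-k-1}$ identifies $H_k(V)\cong H_k(\R^n\setminus L)\cong H_k(\bS^k)$. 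Without some such device for transporting the computation into an ambient manifold, neither your intermediate-degree step nor your top-degree step closes.
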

\begin{proof}
  By Proposition \ref{prop:HomeomorphismFoliation},
  $U \cong \cone(V)$ and therefore $U\setminus\{x\} \cong V\times(0,1)$.
  
  If $k < n-1$, since $V \in \mathcal{U}_k$ we know that $U$ is a normal neighborhood contained in some
  $W \in \mathcal{U}_{k+1}$. By Proposition \ref{prop:HomeomorphismFoliation} there
  exists an open set containing $U$ in $W$ that is homeomorphic to $U \times (0,1)$.
  Removing the point $x \in U$ thus gives rise to a neighborhood of $U\setminus\{x\}$
  homeomorphic to $(U \setminus \{x\}) \times (0,1) \cong V \times (0,1)^2$.
  
  We can continue inductively to find an open set containing $U$ in the top
  level normal neighborhood (which is a domain in $\R^n$) that is homeomorphic
  to $U \times (0,1)^{n-k-1}$. Furthermore, $U\setminus\{x\}$ is contained in an
  open set that is homeomorphic to $V \times (0,1)^{n-k}$.
  These are now open sets in $\R^n$ and are therefore manifolds.
  Recall that $U \cong \cone(V)$ and therefore $U$ is contractible. So $U\times (0,1)^{n-k-1}$ is also contractible.  
  
  By extending $U$ to an open domain in $\R^n$ the point $x \in U$ is extended radially. Therefore $\{x\}\times (0,1)^{n-k-1} \subset U\times (0,1)^{n-k-1}$ is an $(n-k-1)$-submanifold.
  For $1 \le l < k$, consider a map 
  \begin{align*}
    \gamma \colon \bS^l \to (U\times (0,1)^{n-k-1})\setminus (\{x\} \times(0,1)^{n-k-1}).    
  \end{align*}
  Since $U \times (0,1)^{n-k-1}$ is contractible, there is a homotopy $H$ that takes $\gamma$ to a point $x' \ne x$.  The dimension of 
  $\bS^l \times (0,1)$ is $l+1$ and 
  \begin{align*}
    (l+1) +(n-k-1) < n,
  \end{align*}
  since $l < k \le n-1$.
  So we claim that the image of $H$ can be guaranteed to avoid $\{x\} \times (0,1)^{n-k-1}$.
  To prove this claim note that $H$ can be assumed to be smooth since $U\times (0,1)^{n-k-1}\setminus (\{x\} \times(0,1)^{n-k-1})$ is an open set and hence a smooth manifold.  By the compactness of the image of $H$, there exists an $\epsilon > 0$ so that the $\epsilon$-neighborhood of $H$ lies in $U\times (0,1)^{n-k-1})\setminus (\{x\} \times(0,1)^{n-k-1})$.
  Smooth functions are dense in the uniform topology. 
  Therefore there exists a smooth function $\tilde H \colon \bS^l \times [0,1] \to U\times (0,1)^{n-k-1})\setminus (\{x\} \times(0,1)^{n-k-1})$ such that $\|H - \tilde H\| < \epsilon$.
  A straight-line homotopy takes $H$ to $\tilde H$ and hence the claim is shown.
  
  We can also assume that the image of $H$ is transverse to the submanifold $\{x\} \times(0,1)^{n-k-1}$ (see \cite[Chapter 2]{guilleminpollack}).  Since the dimensions add up to a number less than $n$, the transversality condition implies that they are actually disjoint.  The entire homotopy is disjoint from the removed set and thus 
  \begin{align*}
      \pi_l((U\times (0,1)^{n-k-1})\setminus (\{x\} \times(0,1)^{n-k-1})) = 0
  \end{align*}
for $1 \le l < k$.  
 By the above argument, $\pi_l(V) = 0$ for $1\le l < k$.  The lemma now follows by the Hurewicz theorem \cite[p. 366]{Hatcher} for this index range.
 
 It remains to show that $H_k(V) = H_k(S^k)$.  We show this case by the use of the Mayer-Vietoris theorem.  Let $M = U\times (0,1)^{n-k-1}$ and let $L =\{x\} \times\mathbb R^{n-k-1}$.  Note that
 \begin{align*}
     M \setminus L = (U\times (0,1)^{n-k-1})\setminus (\{x\} \times(0,1)^{n-k-1}).
 \end{align*}
 The Mayer-Vietoris theorem implies that
 \begin{align*}
     \cdots \to  H_{k+1}(M \cup (\mathbb R^n \setminus L) )  &\to H_k(M \cap (\mathbb R^n\setminus L) ) \to H_k(M)\oplus H_k(\mathbb R^n \setminus L)\\ 
     &\to H_k(M \cup (\mathbb R^n \setminus L) ) \to \cdots
 \end{align*}
 is an exact sequence.
We have that $M$ and $ M \cup (\mathbb R^n \setminus L)$ are contractible.  Additionally,
 \begin{align*}
     M \cap (\mathbb R^n\setminus L) = M \setminus (\{x\} \times(0,1)^{n-k-1}).
 \end{align*}
   So
\begin{align*}
    0 \to H_k(M\setminus (\{x\} \times(0,1)^{n-k-1})) \to H_k(\mathbb R^n \setminus L) \to 0.
\end{align*}
This implies that
\begin{align*}
    H_k(V) &= H_k((U\times (0,1)^{n-k-1})\setminus (\{x\} \times(0,1)^{n-k-1})\\
    &\cong H_k(\mathbb R^n \setminus L) \cong H_k(S^k). 
\end{align*}
\end{proof}

We next show that the boundary of normal domains are homeomorphic to spheres.
\begin{proposition}\label{prop:boundaryspheres}
  Let $k \in \{2,\dots,n-1\}$.  If $V \in \mathcal{U}_k$, then $V \cong \bS^k$.
\end{proposition}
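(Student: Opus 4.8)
The plan is to prove, by induction on $k$, the slightly stronger claim that $V \simeq \bS^k$ for every $V \in \mathcal{U}_k$ and every $k \in \{1,\dots,n-1\}$; the case $k=1$ is not part of the proposition but is needed to start the induction. For $k \geq 2$ the shape of the argument is the following. By Lemma \ref{lemma:homologyequality} the space $V$ has the singular homology of $\bS^k$, and the computation in the proof of that lemma in fact gives $\pi_l(V)=0$ for $1 \le l < k$, so $V$ is simply connected. Since $V$ is compact and at most $k$-dimensional, it is therefore a simply connected homology sphere as soon as it is a closed topological $k$-manifold, and in that case Proposition \ref{prop:WhiteheadHomology} yields $V \simeq \bS^k$ at once. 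Thus the entire content of the proposition is the assertion that every $V \in \mathcal{U}_k$ is a closed $k$-manifold, and this is what the induction establishes.

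For the inductive step, fix $k$ with $2 \le k \le n-1$ and assume the claim for $k-1$. Let $V \in \mathcal{U}_k$ and $y \in V$. By Lemma \ref{lemma:LemmaX} choose $r < r(y,f|_V)$ so that $N \colonequals U(y,f|_V,r)$ is a normal neighborhood of $y$ in $V$; then $\partial N \in \mathcal{U}_{k-1}$, so the inductive hypothesis gives $\partial N \simeq \bS^{k-1}$. Applying Proposition \ref{prop:HomeomorphismFoliation} with the ambient space taken to be $V$, we obtain a homeomorphism $N \setminus \{y\} \simeq \partial N \times (0,1) \simeq \bS^{k-1}\times(0,1)$ under which $y$ is the common limit of the radial segments as the parameter tends to $0$; compactifying that one end by the single point $y$ exhibits $N$ as an open $k$-ball with $y$ at its center. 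Hence $y$ has a neighborhood in $V$ homeomorphic to $\R^k$, and since $y$ was arbitrary and $V$ is compact, $V$ is a closed $k$-manifold. This completes the step.

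It remains to treat the base case $k=1$. Let $V \in \mathcal{U}_1$, so $V = \partial U$ for a normal domain $U = U(x,f|_W,r)$ with $W \in \mathcal{U}_2$. By Lemma \ref{lemma:ImageIsASphere} we have $f(V)\simeq\bS^1$, and the branch set of $f|_V$, being contained in a simplicial complex of dimension $k-2=-1$ (as recorded before Proposition \ref{prop:UniqueLifts}), is empty; therefore $f|_V \colon V \to f(V)\simeq\bS^1$ is a local homeomorphism, and, $V$ being compact, a finite-sheeted covering map. Consequently $V$ is a finite disjoint union $\bigsqcup_{i=1}^{j}\bS^1$. To see that $j=1$, recall from Proposition \ref{prop:HomeomorphismFoliation} that $U \simeq \cone(V) \simeq \cone\!\bigl(\bigsqcup_{i=1}^{j}\bS^1\bigr)$, and run the thickening argument from the proof of Lemma \ref{lemma:homologyequality}: it produces an open subset of $\R^n$, hence an $n$-manifold, homeomorphic to $U \times (0,1)^{n-2}$. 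If $j\ge 2$, then this $n$-manifold, being homeomorphic to $\cone\!\bigl(\bigsqcup_{i=1}^{j}\bS^1\bigr)\times(0,1)^{n-2}$, would split locally into $j$ connected pieces after the removal of the codimension-two set $\{v\}\times(0,1)^{n-2}$ (with $v$ the cone point), contradicting the fact that a set of topological dimension at most $n-2$ does not locally separate $\R^n$. Hence $j=1$ and $V \simeq \bS^1$.

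The crux is the manifold property of $V$: without the hypothesis on $f(B_f)$ the boundary of a normal domain may be a wild space, such as the suspension of the Poincar\'e homology sphere occurring for the Church--Timourian map of Section \ref{sec:Nemesis}. This property rests on the cone decomposition of Proposition \ref{prop:HomeomorphismFoliation} together with the inductive knowledge that the lower-dimensional boundaries in $\mathcal{U}_{k-1}$ are spheres, and in the base case additionally on the elementary fact that a one-point union of two or more disks cannot sit inside $\R^n$ as an open set even after multiplication by a cube. Once $V$ is known to be a closed $k$-manifold, Lemma \ref{lemma:homologyequality} and Proposition \ref{prop:WhiteheadHomology} finish the proof with nothing further to check.
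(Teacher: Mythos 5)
Your proof is correct and follows essentially the same route as the paper's: induction on the dimension of the lower dimensional normal domains, with the base case $k=1$ handled by recognizing $f|_V$ as a finite covering of $\bS^1$, the inductive step obtaining the closed-manifold property from the cone structure of Proposition \ref{prop:HomeomorphismFoliation} together with the inductive hypothesis, and the conclusion via Lemma \ref{lemma:homologyequality} and Proposition \ref{prop:WhiteheadHomology}. Your explicit ruling out of a disjoint union of circles in the base case is the same codimension-based non-separation argument the paper uses at the outset to establish connectedness of $V$.
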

\begin{proof}
By Lemma \ref{lemma:homologyequality}, $H_0(V) \cong H_0(\mathbb S^k)$ and therefore $V$ is connected.

  We now continue to prove the main claim in the proposition.  
  Suppose first that $k=1$ and fix $V \in \mathcal{U}_1$. 
  We denote the restriction $f|_V \colon V \to fV$ by $g$.  By Lemma \ref{lemma:ImageIsASphere}, $gV$ is homeomorphic to a circle.
  At this level the intersection of $f(B_f)$ with $g(V)$ is at most a finite set of points.  Since $V$ is the boundary of a normal neighborhood we can therefore conclude that actually
  \begin{align*}
    g(V) \cap f(B_f) = \emptyset.
  \end{align*}
  This implies that $V \cap B_f = \emptyset$ and 
  \begin{align*}
    g \colon V \to g(V) \cong \bS^1
  \end{align*}
  is a covering map.  
  Since the map $f$ is finite-to-one in any normal domain, we see that $g$ is a finite-to-one cover of $\bS^1$.
  This implies that $V$ is homeomorphic to $\bS^1$.
  
  Suppose next that the claim holds true for some $k < n-1$ and $V \in \mathcal{U}_{k+1}$. Fix a point $x \in V$ and take
  a normal neighborhood $W$ of $x$ such that $\partial W \in \mathcal{U}_{k}$.
  By the inductive assumption $\partial W$ is homeomorphic to $\bS^k$.
  By Proposition \ref{prop:HomeomorphismFoliation},
  \begin{align*}
    W
    \cong \cone{\partial W}
    \cong \cone{\bS^k}
    \cong B^n.
  \end{align*}
The point $x$ has a neighborhood in $V$ homeomorphic to a ball and therefore $V$ is a closed $k$-manifold.
By Lemma \ref{lemma:homologyequality},
\begin{align*}
H_l(V) \cong H_l(\bS^k)
\end{align*}
for $0 \le l \leq k$.  In the proof of Lemma \ref{lemma:homologyequality}, we also showed that $\pi_l(V) = 0$ for $1 \le l \le k-1$.
Combining these we see that $V$ is a simply connected homology $k$-sphere and so
$V \cong \bS^k$ by Theorem \ref{thm:WhiteheadHomology}.
\end{proof}

\section{PL cone mappings}
\label{sec:PL}
In this section we prove Theorems \ref{thm:maintheorem} and \ref{thm:GlobalMainTheorem}. We divide the proof into a local and global part.
An open and discrete map $f \colon \bS^n \to \bS^n$ is called \emph{locally PL with respect to a simplicial decomposition $A$ of $\bS^n$} if, for all $x \in \bS^n$, there exist an open set $U \subset \overline U \subset \bS^n$ containing $x$ and an open set $U' \subset \overline U' \subset \bS^n$  with a homeomorphism
\begin{align*}
      \phi \colon \overline{U'} \to \overline{U} \subset \bS^n
  \end{align*}
such that $f \circ \phi$ is a PL mapping.  Additionally, if $\overline{U'}$ is given a simplicial decomposition defined by $f \circ \phi$, a suitable subdivision of the $k$-simplices in  $\overline{U'}$ are mapped to $k$-simplices in a subdivision of $A$.

\begin{lemma}\label{lemma:localtoglobal}
  Let $g \colon \bS^n\to \bS^n$ be an open and discrete map whose branch set is contained in an $(n-2)$-simplicial complex. Let $A$ be a simplicial decomposition of $\bS^n$ that contains $g(B_g)$ in its $(n-2)$-skeleton.  Additionally, suppose that $g$ is locally $PL$ with respect to $A$.  Under these conditions, there exists a homeomorphism
  \begin{align*}
      \Phi \colon \bS^n \to \bS^n
  \end{align*}
  such that $g \circ \Phi$ is a PL mapping and the $k$-simplices defined by $g \circ \Phi$ are mapped to $k$-simplices in $A$.
\end{lemma}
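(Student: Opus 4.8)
The plan is to patch together the local PL structures provided by the hypothesis into a single global one, using a finite cover and an induction on the skeleta of a common refinement. First I would extract from the local-PL hypothesis a finite open cover $\{U_i\}_{i=1}^N$ of $\bS^n$ together with homeomorphisms $\phi_i \colon \overline{U_i'} \to \overline{U_i}$ so that each $g \circ \phi_i$ is PL and sends $k$-simplices of the induced decomposition of $\overline{U_i'}$ into $k$-simplices of a subdivision of $A$; finiteness comes from compactness of $\bS^n$. Passing to a common subdivision $A'$ of $A$ (fine enough that every $g\circ\phi_i$ is simplicial into $A'$ and every $U_i$ is a union of open stars of $A'$-vertices) lets me work with a single target triangulation. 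The key point to isolate is that on each overlap $U_i \cap U_j$ the transition map $\phi_j^{-1}\circ\phi_i$ is a homeomorphism conjugating one PL map into another with the \emph{same} target simplices; over the complement of $g^{-1}(A'^{(n-2)})$ the maps $g\circ\phi_i$ are local homeomorphisms, so the transition maps are PL there after a further subdivision, and near $g^{-1}(A'^{(n-2)})$ the preimage structure is controlled because $B_g$ itself lies in an $(n-2)$-complex and, by Theorem~\ref{thm:maintheorem} applied locally, $g$ is locally a cone of a lower-dimensional PL map, hence locally PL in a genuinely combinatorial (not merely topological) sense on the preimage of each simplex of $A'$.

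Next I would build $\Phi$ by induction over the skeleta of the pulled-back decomposition. Let $\mathcal{D}$ be the cell structure on the domain $\bS^n$ whose cells are the components of $g^{-1}(\sigma \setminus \partial\sigma)$ for $\sigma \in A'$ — by the cone structure of $g$ each such component is an open ball of the same dimension as $\sigma$, so $\mathcal{D}$ is a regular CW (indeed PL cell) structure, and $g$ maps each closed cell onto a closed simplex of $A'$. The homeomorphism $\Phi$ is constructed cell by cell: on the $0$-skeleton send each vertex of $\mathcal{D}$ to itself (it already maps to a vertex of $A'$); assuming $\Phi$ has been defined on the $(k-1)$-skeleton so that $g\circ\Phi$ is simplicial there, extend over each $k$-cell $c$ by using the local chart $\phi_i$ containing a neighborhood of $c$ to identify $c$ with a PL $k$-ball mapping PL-ly onto $g(c)$, then correcting by a PL homeomorphism of $c$ rel $\partial c$ (which exists since any two PL structures on a PL ball agreeing on the boundary are PL-equivalent rel boundary, by the standard Alexander–Newman / PL disc theorem in the dimensions $k \le n$ — the Hauptvermutung issues in dimension $n$ are handled because one of the two structures is the pullback of a simplicial structure under a PL branched cover, cf.\ the setup in Section~\ref{sec:Bdry}). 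Triangulating each $k$-cell compatibly then realizes $g\circ\Phi$ as a simplicial map into a subdivision of $A'$, and finally subdividing the domain once more makes it simplicial into $A'$ itself without subdividing the target further, as required.

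The main obstacle I expect is the compatibility of the local charts on overlaps at the level of PL structure rather than mere topology: the hypothesis only tells us each $g\circ\phi_i$ is PL, but the transition maps $\phi_j^{-1}\circ\phi_i$ need not be PL a priori, so gluing the local PL structures into one global PL structure on the domain is exactly the kind of statement that can fail for abstract reasons (Hauptvermutung). The resolution is that the structures are not arbitrary: they all push forward under $g$ to subdivisions of the \emph{fixed} triangulation $A$, so the ambiguity on an overlap lives entirely in PL self-homeomorphisms of balls over a common simplicial target, which are controlled by the uniqueness of PL disc structures rel boundary in each dimension $\le n$. Concretely, the hard lemma to nail down is: if $p\colon B \to \Delta$ and $p'\colon B' \to \Delta$ are PL branched covers of an $n$-ball onto an $n$-simplex that are topologically equivalent and agree on the boundary, then they are PL-equivalent rel boundary — this is where the cone structure from Theorem~\ref{thm:maintheorem} and the codimension-two PL normal form of the branch set do the real work, reducing the statement to the known PL classification in lower dimensions via the induction already set up in Section~\ref{sec:Bdry}.
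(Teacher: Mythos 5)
Your core idea --- pulling back the triangulation $A$ by $g$, with cells the components of $g^{-1}(\sigma\setminus\partial\sigma)$ for $\sigma\in A$, and realizing $g\circ\Phi$ as a simplicial map --- is exactly the strategy of the paper's proof. But there are two concrete problems in how you execute it. First, you invoke Theorem \ref{thm:maintheorem} ``applied locally'' to get the ball/cone structure of the pulled-back cells; this is circular, since Lemma \ref{lemma:localtoglobal} is an ingredient in the proof of Lemma \ref{lemma:LocallyPL} (it is applied to the restriction of $f$ to the boundary sphere of a normal domain in the inductive step), which in turn yields Theorem \ref{thm:maintheorem}. What must be used instead is the hypothesis that $g$ is locally PL with respect to $A$: it gives local injectivity of $g$ on each component $\tau$ of $g^{-1}(\Delta_k^o)$, so $g|_\tau$ is a covering of the simply connected set $\Delta_k^o$, hence a homeomorphism; the real work is then extending $g^{-1}$ continuously to $\partial\Delta_k$ so that the \emph{closed} cell is homeomorphic to $\Delta_k$ (the paper does this with the preimage-separation lemma of Bonk--Meyer), not an appeal to a cone structure. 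Second, your Hauptvermutung/transition-map discussion is a red herring that drives you into unnecessary machinery (PL disc theorems rel boundary, a PL classification of branched covers of balls). The lemma only asserts \emph{topological} equivalence to a PL map: once the pulled-back decomposition $B$ is shown to be a simplicial complex on which $g$ is simplicial into $A$, the homeomorphism $\Phi$ is simply the identification of $\bS^n$ with a geometric realization of $B$, and no PL compatibility of the local charts on overlaps is ever needed.

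A further point you pass over is the simplicial-complex axiom that the intersection of two pulled-back simplices is a single common face: two distinct $k$-cells of $B$ mapping to the same $\Delta_k$ can a priori share several $(k-1)$-faces. The paper rules this out by repeated barycentric subdivision of $A$, pulled back through $g$, and some such step is needed before one can speak of a simplicial map at all; ``subdividing the domain once more'' does not by itself address it. With the circular citation removed, the boundary-extension argument supplied, and the intersection axiom handled, your outline becomes essentially the paper's proof.
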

We remark that the proof here uses ideas from the proof in \cite[Lemma 5.12]{bonkmeyer}.
\begin{proof}
The strategy of the proof will be to pull back the simplicial structure $A$ by $g$.  
The set $\bS^n$ can be covered by finitely many open sets $U$ that satisfy the conditions in the definition of the local PL property of $g$.
We refine $A$ so that $g$ maps the simplices in $\overline{U'}$ to simplices in $A$, where $\phi \colon \overline{U'} \to \overline{U}$ is a homeomorphism as described above.

In the spirit of pulling back $A$ by $g$, let $B$ be the set of simplices $\sigma$ such that $g(\sigma) \in A$ and $g|_\sigma$ is a homeomorphism onto its image.
To show that this is a simplicial structure for $\bS^n$ it suffices to show that every point lies in the interior of a unique simplex and that the intersection of two simplices is a face of those simplices.

We first show that every point lies in the interior of a unique simplex.  Let $x \in \bS^n$ and $y=g(x) \in \Delta_k^o$, where $\Delta_k$ is a $k$-simplex in $B$ and $\Delta_k^o$ is the interior of $\Delta_k$.
Let $U$ be an open set containing $x$ such that there exists a homeomorphism $\phi \colon U' \to U$ satisfying that $g \circ \phi$ is a PL mapping.  By our assumption, $x' = \phi^{-1}(x)$ is contained in a simplex $D$ that is mapped by $g \circ \phi$ onto a simplex in $A$.  Since $g\circ \phi(x') = y \in \Delta_k^o$, the simplex $D$ must be a degree $k$ simplex and $x' \in D^o$.  Additionally, $D^o = (g\circ \phi)^{-1}(\Delta_k^o)\cap U'$.

The map $g \circ \phi$ is a PL branched cover.  Therefore it is locally injective on $D^o$.
So $g$ defines a covering map from the component $\tau$ of $g^{-1}(\Delta_k^o)$ containing $x$ to $\Delta_k^o$.  Since $\Delta_k$ is simply connected, $g$ is actually a homeomorphism from $\tau$ to $\Delta_k^o$.

We claim that $g$ extends to a homeomorphism from $\sigma = \overline{\tau}$ to $\Delta_k$.  It suffices to show that $g^{-1}\colon \Delta_k^o \to \sigma$ extends continuously to the boundary.  Let $\{y_n\}_{n \in \mathbb N}$ be a sequence of points such that $y_n \to y \in \partial \Delta_n$.  Then there exists a sequence of points $\{x_n\}_{n\in \mathbb N}$ such that $g(x_n)= y_n$.  Let $a$ and $b$ be accumulation points of $\{x_n\}_{n\in\mathbb N}$.
Let $a_n$ be a subsequence that converges to $a$ and $b_n$ a subsequence that converges to $b$.

By \cite[Lemma 5.15]{bonkmeyer}, for all $\epsilon > 0$, there exists $\delta > 0$ so that
\begin{align*}
    g^{-1}(B(y,\delta)) \subset \cup_{z \in g^{-1}(y)} B(z,\epsilon).
\end{align*}
By choosing $\epsilon$ sufficiently small, the sets $B(z,\epsilon)$ will be pairwise disjoint for $z \in g^{-1}(y)$.  However, for large $n$, $g(a_n)$ and $g(b_n)$ will be in $B(y,\delta)$.  If $a_n$ and $b_n$ are connected by a path $\gamma$, then $g^{-1}(\gamma)$ must be a path connecting $a_n \in B(a,\epsilon)$ and $b_n \in B(b,\epsilon)$.  So $g^{-1}(\gamma)$ lies outside $\cup_{z\in g^{-1}(y)}B(z,\epsilon)$, which gives a contradiction if $a \ne b$.
Thus $g^{-1}$ extends continuously to $\partial \Delta_k$ and $g$ defines a homeomorphism from $ \sigma$ to $\Delta_k$.  This shows that $ \sigma$ defines a $k$-simplex in $B$ and that $x \in  \sigma$.
This shows that every $x$ is in a simplex defined by $B$.

Let $\sigma_1$ and $\sigma_2$ be simplices in $B$ and suppose $\sigma_1\cap \sigma_2 \ne \emptyset$.  If $\sigma_1^o\cap \sigma_2^o \ne \emptyset$, then they must both be $k$-simplices and by construction must be mapped homeomorphically onto the same $k$-simplex $\Delta_k \in A$.  This is not possible since $\Delta_k$ is simply connected.

If $\sigma_1^o\cap \sigma_2^o = \emptyset$, then suppose $\tau$ is a simplex such that $\tau^o \cap \sigma_1\cap \sigma_2 \ne \emptyset$. 
It follows that $g(\tau) \subset g(\sigma_1)\cap g(\sigma_2)$.  The map $g$ defines an inverse on $g(\tau)^o$, which must agree with the inverses that it defines on $g(\sigma_1)^o$ and $g(\sigma_2)^o$.  So the entirety of $\tau$ must be contained in $\sigma_1\cap \sigma_2$.
This implies that $\sigma_1\cap \sigma_2$ is comprised of the union of finitely many simplices.

Finally, we claim that $A$ and $B$ can be refined so that the intersection of two simplices is a face. 
Let $\sigma_1$ and $\sigma_2$ be $k$-simplices.  Suppose that $\sigma_1\cap \sigma_2 \ne \emptyset$ and that there are two $(k-1)$-simplices whose interiors are in $\sigma_1\cap \sigma_2$.  We apply a barycentric subdivision $A$.  Then $g$ pulls back this decomposition to a refinement of $B$ and the new simplices in $\sigma_1$ and $\sigma_2$ cannot share more than one $(k-1)$-simplex. 
We may now proceed by repeated barycentric subdivision to rule out the cases when $\sigma_1\cap\sigma_2$ contain more than one interior of lower degree shared simplices.  At the end of this process, the refined $B$ must be a simplicial decomposition of $\bS^n$.

The construction implies that $g$ is a simplicial map from $S_B^n$ to $S_A^n$.
Thus there exists a PL map from $S_B^n$ to $S_A^n$ that is topologically equivalent to $g$ with respect to $A$.
\end{proof}

\begin{lemma}\label{lemma:LocallyPL}
  Let $f \colon \bS^n\to \bS^n$ be an open and discrete map with $f(B_f)$ contained in a simplicial $(n-2)$-complex.  Let $A$ be a simplicial decomposition of $\bS^n$ that contains $f(B_f)$ in its $(n-2)$-skeleton.
  Then $f$ is locally PL with respect to $A$.
\end{lemma}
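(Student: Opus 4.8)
The goal is to show that near every point $x_0 \in \bS^n$ there is a homeomorphism $\phi$ from a ball to a neighborhood of $x_0$ so that $f \circ \phi$ is PL and respects the simplicial decomposition $A$. The plan is to argue by (downward) induction on the dimension of the lower-dimensional normal domains, exactly in the spirit of Section \ref{sec:Bdry}, and to assemble the local model from the cone structure furnished by Proposition \ref{prop:HomeomorphismFoliation}. The base case is dimension $1$: if $V \in \mathcal{U}_1$ then, as shown in the proof of Proposition \ref{prop:boundaryspheres}, $g = f|_V \colon V \to g(V) \simeq \bS^1$ is a finite covering map with $g(V) \cap f(B_f) = \emptyset$, so $V \simeq \bS^1$ and $g$ is PL-equivalent to the standard $p$-fold cover $z \mapsto z^p$ after choosing a PL identification of $V$ and $g(V)$ with the standard circle; since the image avoids $f(B_f)$, this identification can be made compatible with (a subdivision of) $A$.

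For the inductive step, suppose the local PL statement holds for all branched covers arising as restrictions $f|_W$ with $W \in \mathcal{U}_k$, and let $x_0 \in \Omega$ with $U \colonequals U(x_0, f, r)$ a small normal neighborhood. By Proposition \ref{prop:boundaryspheres}, $\partial U \in \mathcal{U}_{n-1}$ is homeomorphic to $\bS^{n-1}$, and by Proposition \ref{prop:HomeomorphismFoliation} we have $U \simeq \cone(\partial U)$ with the cone parameter realized by the unique radial lifts of Proposition \ref{prop:UniqueLiftsTopDim}. Meanwhile $f(U) = \overline{B}(f(x_0), r) \simeq \cone(\bS^{n-1})$ is the standard cone on a sphere, and under this identification $f|_U$ is a cone map: $f(z) = (g(x), t)$ where $g = f|_{\partial U} \colon \partial U \to \partial B(f(x_0), r) \simeq \bS^{n-1}$ and $t$ is the radial coordinate — this is precisely the content of the radial-lift propositions, which say the foliation by boundaries of shrinking normal domains is carried to the foliation by concentric spheres. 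By Lemma \ref{lemma:LemmaX}, $g$ is a branched cover onto $\bS^{n-1}$ whose image of branch set is contained, up to homeomorphism, in an $(n-3)$-simplicial complex; applying the inductive hypothesis together with Lemma \ref{lemma:localtoglobal} in dimension $n-1$ (whose hypotheses are met once we record that $f(B_f) \cap \partial B(f(x_0),r)$ sits in the $(n-3)$-skeleton of a suitable decomposition $A'$ of $\bS^{n-1}$), we obtain a homeomorphism $\Phi' \colon \bS^{n-1} \to \partial U$ with $g \circ \Phi'$ PL and simplicial with respect to $A'$. Coning off, $\cone(\Phi') \colon \cone(\bS^{n-1}) \to \cone(\partial U) \simeq U$ is a homeomorphism and $f \circ \cone(\Phi') = \cone(g \circ \Phi')$, the cone of a PL map, hence PL for the cone triangulation. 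Finally one must check compatibility with $A$ itself rather than the auxiliary decomposition: since $f(B_f)$ is radial in $\overline{B}(f(x_0),r)$, the cone triangulation on the target can be taken to be the restriction of (a subdivision of) $A$, so the $k$-simplices of the pulled-back structure land in $k$-simplices of $A$ as required.

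The main obstacle is the last compatibility point: matching the cone triangulation coming from the inductive construction on $\partial U$ with the given decomposition $A$ of $\bS^n$ near $f(x_0)$. The radial structure of $f(B_f)$ in a small normal neighborhood (established inside the proof of Proposition \ref{prop:UniqueLiftsTopDim}) is what makes this work: $\overline{B}(f(x_0),r)$ inherits a decomposition as the cone on $\partial B(f(x_0), r) \cap (\text{skeleton of } A)$, and this can be arranged to be a subdivision of $A|_{\overline{B}(f(x_0),r)}$ by first subdividing $A$ barycentrically enough near $f(x_0)$ — an argument of exactly the flavor used at the end of the proof of Lemma \ref{lemma:localtoglobal}. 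One should also be slightly careful that the auxiliary homeomorphism putting $f(V) \setminus \{pt\}$ onto a $k$-plane (used before Proposition \ref{prop:UniqueLifts}) is compatible with all these triangulations; this is harmless because it, too, can be taken PL relative to a common subdivision. Everything else is bookkeeping: the cone of a PL simplicial map is PL simplicial for the cone triangulations, and the inductive hypothesis supplies the lower-dimensional input.
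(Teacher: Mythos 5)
Your proposal is correct and follows essentially the same route as the paper: induction on dimension, the cone structure $U(x,f,r)\simeq\cone(\partial U)$ from Propositions \ref{prop:HomeomorphismFoliation} and \ref{prop:boundaryspheres}, the identification $f|_U\simeq\cone(g)$ with $g=f|_{\partial U}$, the inductive hypothesis plus Lemma \ref{lemma:localtoglobal} to make $g$ globally PL, and the radial symmetry of $A$ near $f(x)$ to match the cone triangulation with $A$. The only cosmetic difference is that you start the induction at the dimension-$1$ circle-covering case rather than invoking Sto\"ilow's theorem at $n=2$, which amounts to the same thing.
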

\begin{proof}
  We proceed by induction on $n$. The base case, $n = 2$, follows from Sto\"ilow's theorem (see \cite{Stoilow} or \cite{LuistoPankka-Stoilow})
  as $f$ is topologically equivalent to a rational map $S^2 \to S^2$ and rational maps are topologically equivalent to PL mappings.
  
  We now suppose that $f\colon \bS^n \to \bS^n$ is defined as in the statement of the lemma.  Then there exists a Euclidean simplicial decomposition $A$ (when $\bS^n$ is viewed as $\mathbb R^n \cup \{\infty\}$) such that $f(B_f)$ is contained in the $(n-2)$-skeleton of $A$.
  
  Fix $x \in \bS^n$. For a small radius $r_0$, there exists a ball $B(f(x),r_0)$ that is radially symmetric with respect to the simplicial decomposition $A$.  More precisely, for any simplex $\Delta \in A$,
  \begin{align*}
      \Delta \cap \partial B(f(x),r) = \frac{r}{s} (\Delta \cap \partial B(f(x),s))
  \end{align*}
  for $0<r,s \le r_0$, where $r/s$ is the dilation mapping the $s$-sphere at $f(x_0)$ to the $r$-sphere.
  
  By Proposition \ref{prop:HomeomorphismFoliation} and Proposition \ref{prop:boundaryspheres}, for sufficiently small $r_0$, the normal
  neighborhood $U(x,f,r_0) \cong \cone(V)$, where $V = \partial U(x,f,r_0)$, is homeomorphic to $S^{n-1}$.
  Let $g = f|_V$.  
  By the construction of the homeomorphism in Proposition \ref{prop:HomeomorphismFoliation},
  $f$ is topologically equivalent to $\cone(g) \colon \cone(V) \to B(f(x),r_0)$.
  By the choice of $B(f(x),r_0)$, the map $g \colon V \to \partial B(f(x),r_0)$ sends its branch set into the $(n-3)$-skeleton of $B(f(x),r_0)$ induced by $A$.
  The induction hypothesis gives that $g$ is locally a PL mapping which respects the simplicial decomposition $A$. By Lemma \ref{lemma:localtoglobal} it is globally a PL mapping, which respects the simplicial decomposition $A$.
  
  The set $B(f(x),r_0)$ was chosen to be radially symmetric.
  Therefore, the map $\cone(g)$ also respects the simplicial decomposition $A$ on $\bS^n$.
  Thus $f$ satisfies the conclusion of the lemma.
\end{proof}

Theorem \ref{thm:maintheorem} follows immediately from  Lemma \ref{lemma:LocallyPL}.
The combination of Lemmas \ref{lemma:localtoglobal} and \ref{lemma:LocallyPL} proves Theorem \ref{thm:GlobalMainTheorem}.

\section{Homotopic properties of foliations}
\label{sec:HomotopiesOfHomotopies}

In Section \ref{sec:Bdry} we showed that the boundaries of normal neighborhoods locally foliate a punctured normal neighborhood.
Furthermore, when the image of the branch set is a simplicial $(n-2)$-complex, this foliation is the trivial
one, i.e., it consists only of spheres. The proof in Section \ref{sec:Bdry} relied strongly on the fact
that by Proposition \ref{prop:HomeomorphismFoliation} the boundaries are homeomorphic. This enabled us to show
that the boundaries are not only manifolds but even spheres. 

In this section we show that the existence of the homeomorphisms given by Proposition \ref{prop:HomeomorphismFoliation}
is not needed if we a priori assume the boundaries to be manifolds.
Compare these results to the example in Section \ref{sec:Nemesis}, where we noted that the boundaries of normal neighborhoods of
the double suspension map are not manifolds but do foliate a punctured domain in $\bS^5$.
For clarity we state the results here as concerning codimension 1 closed submanifolds in $\R^n$ instead of
focusing on boundaries of normal neighborhoods.
We prove that the only topological codimension 1 manifold foliations of punctured
domains in euclidean spaces are the trivial spherical ones. We have not been able to find this statement
recorded in the literature in this generality, but we do not assume it to be unknown to the specialists in the field.
See however \cite[Theorem 3.7 and Lemma 6.2]{MartioSrebro} for the three dimensional case and compare to the
Reeb Stability Theorem \cite[Theorem 2.4.1, p.\ 67]{Foliations-book} 
for a related claim in the smooth setting.
Compare also to the Perelman stability theorem in \cite{Perelman-Stability} (see also \cite{Kapovitch-PerelmanStability}) from which a similar result could be deduced in the smooth setting.

We do not assume that leaves in foliations are homeomorphic, which arises from unique path lifts in the setting of Section \ref{sec:Bdry}.  Rather, we rely here
on the fact that in manifolds with positive injectivity radius homotopy arguments can be essentially reduced to discrete homotopy.
Note that the above-mentioned Perelman stability theorem, \cite{Perelman-Stability}, requires the assumption of a lower bound to the Ricci curvature, 
and such a lower bound also gives rise to a lower bound for the injectivity radius of a closed Riemannian manifold.

\begin{definition}\label{def:foliation}
Let $\{U_t\}$, $t \in (0,1)$, be a family of compact $n$-dimensional connected manifolds with boundary contained in $\R^n$.  If $\partial U_t = X_t$, then $\mathcal{X} \colonequals \{ X_t \}, t \in (0,1)$ is a \emph{topological foliation} if the following conditions hold.
  \begin{enumerate}[{(F}1{)}]
  \item $X_t \cap X_s = \emptyset$ when $t \neq s$.
  \item $0 \in U_t \subset U_s$ for all $t \leq s$.
  \item $0 \notin X_t$ for all $t \in (0,1)$ and $\diam(X_t) \to 0$ when $t \to 0$.
  \item $U \colonequals \{ 0 \} \cup \bigcup_{t} X_t$ is an open neighborhood of the origin.
  \end{enumerate}
  By the above properties, there exists a parameter $t_0 \in (0,1)$ and a radius $r_0 > 0$ such
  that $U_{t_0} \subset B(0,r_0) \subset U$. We call the pair $(t_0,r_0)$ a \emph{break point} of the foliation.
\end{definition}
We remark that the assumption that $U_t$ is a manifold with boundary seems unnecessary to us.  We believe that instead it should suffice to assume that $X_t$ are closed, connected $(n-1)$-submanifolds in $\R^n$ along with (F1)-(F4) should suffice for the results that follow.

The aim of this section is to show that the definition above always leads to a trivial foliation.
\begin{theorem}\label{thm:SphericalityOfFoliation}
  For any topological foliation $\mathcal{X} = \{ X_t \}$ there exists
  a break point $(t_0,r_0)$ with $t_0 \in (0,1), r_0 > 0$ such that $X_t$ is a topological sphere
  for all $t \leq t_0$.
\end{theorem}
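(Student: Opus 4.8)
The plan is to work below a break point $(t_0,r_0)$ and show that each leaf $X_t$, $t\le t_0$, is a simply connected homology $(n-1)$-sphere, after which Proposition \ref{prop:WhiteheadHomology} finishes the argument. First I would fix a break point so that $U_{t_0}\subset B(0,r_0)\subset U$, and restrict attention to $t$ small enough that $X_t\subset B(0,r_0)\setminus\{0\}$ and $X_t$ separates a small sphere around $0$ from $\partial B(0,r_0)$. The key structural observation is that $U_{t}\setminus\{0\}$ is an open subset of $\R^n$, hence a manifold, and $U_{t}$ is a manifold with boundary $X_t$; by (F2) the $U_t$ are nested, so $U_t\setminus\{0\}$ deformation retracts onto a collar of $X_t$, giving $U_t\setminus\{0\}\simeq X_t\times(0,1)$ up to homotopy. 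Then I would run the Mayer--Vietoris / Alexander-duality bookkeeping exactly as in the proof of Lemma \ref{lemma:homologyequality}: writing $B(0,r_0)$ as the union of $U_t$ and $B(0,r_0)\setminus\overline{U_{t'}}$ for a slightly smaller $t'$, with intersection homotopy equivalent to $X_t\times(0,1)$, and using that $U_t$ is contractible (it is a manifold with boundary, star-shaped towards $0$ along the foliation by (F2)--(F4)) together with contractibility of $B(0,r_0)$, to extract $H_\ell(X_t)\cong H_\ell(\bS^{n-1})$ for all $\ell$.

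The remaining and genuinely harder point is simple connectivity of $X_t$ for small $t$. Here the injectivity-radius/discrete-homotopy philosophy flagged in the section introduction enters. Given a loop $\gamma\colon\bS^1\to X_t$, I would push it off the leaf into $U_t\setminus\{0\}$, contract it there (since $U_t$ is contractible), and then argue that the contracting disk can be chosen to miss the origin — this is a transversality/general-position argument, since $\{0\}$ has codimension $n\ge 3$ in the ambient $\R^n$ and a $2$-dimensional disk can be perturbed off it, just as in Lemma \ref{lemma:homologyequality}. This shows $\pi_1(U_t\setminus\{0\})=0$. To transfer this to $\pi_1(X_t)=0$ I would use that the inclusion $X_t\hookrightarrow U_t\setminus\{0\}$ is a homotopy equivalence (collar), or more robustly: any loop in $X_t$ bounds a disk in $U_t\setminus\{0\}$, and a disk near the boundary collar $X_t\times(0,1)$ can be homotoped (rel $\bS^1$) into $X_t$, which requires a Seifert--van Kampen argument comparing $\pi_1(X_t)$, $\pi_1(U_t\setminus\{0\})$, and $\pi_1$ of an exterior collar sitting in the ambient manifold. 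The main obstacle I anticipate is exactly this last transfer: making precise that the foliation structure forces the collar to be a genuine bicollar so that $X_t\hookrightarrow U_t\setminus\{0\}$ induces an isomorphism on $\pi_1$; the nesting hypothesis (F2) and the manifold-with-boundary assumption should give a one-sided collar automatically, and combining it with the fact that $X_t$ also bounds on the other side (it separates $B(0,r_0)$) should upgrade this to what is needed.

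Once $X_t$ is known to be a closed, connected, simply connected $(n-1)$-manifold with the homology of $\bS^{n-1}$, Proposition \ref{prop:WhiteheadHomology} yields $X_t\simeq\bS^{n-1}$ for all $t\le t_0$, which is the assertion of the theorem with break point $(t_0,r_0)$. I would organize the write-up as: (i) fix the break point and reduce to small $t$; (ii) establish $U_t\simeq\ast$ and the collar $U_t\setminus\{0\}\simeq X_t\times(0,1)$; (iii) Mayer--Vietoris to compute $H_*(X_t)$; (iv) transversality to kill $\pi_1(U_t\setminus\{0\})$ and transfer across the collar; (v) invoke Proposition \ref{prop:WhiteheadHomology}. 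Steps (iii) and (iv) are essentially repetitions of the arguments already carried out in Lemma \ref{lemma:homologyequality} and Proposition \ref{prop:boundaryspheres}, so the novelty is entirely in replacing the homeomorphism-of-leaves input of Section \ref{sec:Bdry} by the manifold-with-boundary hypothesis, which is what makes the collar available.
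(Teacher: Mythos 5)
Your outline reaches the right endgame (a simply connected homology sphere plus Proposition \ref{prop:WhiteheadHomology}), which is indeed how the paper concludes, but the route you take has a genuine gap at exactly the point this section is designed to address. You assert that, by the nesting (F2) and the manifold-with-boundary hypothesis, $U_t\setminus\{0\}$ deformation retracts onto a collar of $X_t$, so that $X_t\hookrightarrow U_t\setminus\{0\}$ is a homotopy equivalence, and separately that $U_t$ is contractible because it is ``star-shaped towards $0$ along the foliation.'' Neither claim follows from (F1)--(F4). A collar theorem gives a product neighborhood $X_t\times[0,1)$ of the boundary inside $U_t$, but says nothing about the rest of $U_t\setminus\{0\}$, which is filled by the leaves $X_s$, $s<t$, about which nothing is yet known; and ``star-shaped along the foliation'' presupposes a flow or product structure transverse to the leaves, which is precisely what is absent here -- the leaves are not assumed homeomorphic to one another, and the paper stresses that the homeomorphisms of Proposition \ref{prop:HomeomorphismFoliation} are \emph{not} available in this section. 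Your Seifert--van Kampen fallback does not close the gap either: writing $\pi_1(B(0,r_0)\setminus\{0\})=1$ as an amalgamated product over $\pi_1(X_t)$ is perfectly consistent with $\pi_1(X_t)\neq 1$ (two solid tori glued along a torus give a simply connected union). If the homotopy equivalence $U_t\setminus\{0\}\simeq X_t\times(0,1)$ were known, the theorem would follow in a few lines, so this step is essentially equivalent to the statement being proved.

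What the paper does instead, and what your proposal is missing, is Proposition \ref{prop:HomotopyOfHomotopies}: any map $\bS^k\times[0,1]\to A^{\mathcal X}(0,1)$ is homotopic to a map into a single leaf $X_{t_0}$. This is proved by a connectedness argument in the foliation parameter, using Lebesgue numbers of atlases of the leaf and of the ambient manifold, discrete approximation of the map, and affine extension inside charts; the manifold-with-boundary hypothesis enters only in the closedness step, to extend the discrete approximation without touching $\partial U_b$. With that in hand one contracts a $k$-sphere of $X_t$ inside $B(0,r_0)\setminus\{0\}$ (possible for $k\le n-2$) and pushes the entire contraction into $X_t$, giving $\pi_k(X_t)=0$ for $1\le k\le n-2$ directly, with no need for contractibility of $U_t$, a Mayer--Vietoris computation, or a collar transfer; orientability then gives $H_{n-1}(X_t)=\Z$ and Hurewicz finishes. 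You should either prove a statement of this ``push homotopies into one leaf'' type or supply an honest argument for the deformation retraction you invoke; as written, the proposal does not constitute a proof.
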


To prove this claim we will need several auxiliary results.
For the purposes of the upcoming proofs we denote
\begin{align*}
  A^\mathcal{X}(a,b)
  = \bigcup_{t \in (a,b)} X_t
\end{align*}
and call such sets \emph{foliation annuli}.
For subintervals $I$ of $(0,1)$ we similarly
use the notation
$A^\mathcal{X}(I)$.

\begin{lemma}\label{lemma:FoliatonAnnuliIsAManifold}
  Let $\mathcal{X} = \{ X_t \}_{t \in (0,1)}$ be a topological foliation. Then for any $a,b \in (0,1)$, $a<b$,
  the foliation annulus $A^\mathcal{X}(a,b)$ is an $n$-manifold.
\end{lemma}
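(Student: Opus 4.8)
The plan is to show that every point of $A^{\mathcal{X}}(a,b)$ has a Euclidean neighborhood, distinguishing between points lying on some leaf $X_t$ with $t \in (a,b)$ — which by hypothesis are interior or boundary points of the manifolds $U_t$ — and the analysis of why the leaves fit together without creating singularities. First I would observe that $A^{\mathcal{X}}(a,b) = \bigcup_{t \in (a,b)} X_t = U_b' \setminus \overline{U_a'}$ where we set things up so that the $X_t$ for $t \in (a,b)$ exactly sweep out the region between two leaves; more precisely, by properties (F2) and (F4), the set $U \setminus (\{0\} \cup \bigcup_{t \le a} X_t)$ is relevant, and I would first verify that $A^{\mathcal{X}}(a,b)$ is an open subset of the ambient $\R^n$. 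This is the crux: once $A^{\mathcal{X}}(a,b)$ is known to be open in $\R^n$ it is trivially an $n$-manifold, so the whole content of the lemma is the openness.

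The key step is therefore to prove openness. I would fix $x \in X_t$ for some $t \in (a,b)$ and argue that a whole ambient ball around $x$ is contained in $A^{\mathcal{X}}(a,b)$. Since $U_t$ is an $n$-manifold with boundary and $x \in \partial U_t = X_t$, the point $x$ has a half-ball neighborhood in $U_t$; similarly $x \in X_t \subset U_s$ for every $s > t$ with $s < b$, and $x \notin \overline{U_s}$ for $s < t$ with $s > a$ (using (F1), (F2)). The leaves $X_s$ for $s$ slightly above $t$ lie on the "outside" of $X_t$ and those for $s$ slightly below lie on the "inside." Combining the inner half-ball coming from $U_{t'}$ for some $t' \in (a,t)$ — whose boundary $X_{t'}$ misses $x$ — with the outer half-ball, and using that the $X_s$ collectively exhaust a neighborhood of $x$ by (F4) applied on the relevant range, I would conclude that a full ball around $x$ lies inside $\bigcup_{s \in (a,b)} X_s$. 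One needs to check that the full neighborhood $U$ from (F4) intersected with the complement of $\overline{U_a'}$ and with $U_b'$ (suitably defined so the sup of parameters $<b$ is handled) is exactly $A^{\mathcal{X}}(a,b)$, possibly after shrinking $a,b$ slightly inward; stating the lemma for arbitrary $a<b$ is harmless because $A^{\mathcal{X}}(a,b) = \bigcup_{a<a'<b'<b} A^{\mathcal{X}}(a',b')$ is a union of open sets.

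The main obstacle I anticipate is the delicate point that the leaves genuinely nest monotonically and locally foliate — i.e., ruling out that near $x$ some leaf $X_s$ "doubles back" or that $X_t$ has a two-sided collar consistent with the parameter. Here I would lean on the fact that each $U_t$ is a connected manifold with boundary containing $0$, so $X_t$ separates a neighborhood of $x$ into an "inside" (part of $U_t$) and an "outside," and property (F2) forces the parameter ordering to match this inside/outside dichotomy for nearby leaves. A clean way to package this: show that the function assigning to each point $y$ of $U \setminus \{0\}$ the unique $t$ with $y \in X_t$ is well-defined by (F1) and that its super-level sets $\{y : y \in U_s\} = U_s$ are open (since $U_s$ is a manifold with boundary, $\mathrm{int}\, U_s$ is open and $X_s = \partial U_s$ has empty interior, but we need $U_s$ itself open — which is why one passes to $U_{s'}$ with $s' < s$ and uses that $U_{s'} \subset \mathrm{int}\, U_s$, a consequence of (F1)–(F2) and invariance of domain). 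Granting this, $A^{\mathcal{X}}(a,b) = U_{b^-} \setminus \overline{U_{a^+}}$ is visibly open, and the lemma follows.
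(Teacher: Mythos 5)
Your proposal is correct and follows essentially the same route as the paper: both reduce the lemma to showing that $A^{\mathcal{X}}(a,b)$ is open in $\R^n$ (whence it is trivially an $n$-manifold), using the disjointness and monotone nesting of the leaves together with (F4). The paper's version is shorter --- it simply takes $\eps$ smaller than the distance from the given point to the compact sets $X_a$ and $X_b$ and invokes monotonicity --- but the underlying idea is the one you describe.
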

\begin{proof}
  We show that $A^\mathcal{X}(a,b)$
  is an open subset of $\R^n$. To this end, fix
  a point $x_0 \in A^\mathcal{X}(a,b)$. 
  
  By condition (F2) in Definition \ref{def:foliation}, we have that $x_0 \in U_b \cap ( \R^n \setminus \overline{U_a})$.
  So there exists an $\eps > 0$ so that $B(x_0,\eps) \subset U_b \cap (\R^n \setminus \overline{U_a})$.  For $x \in B(x_0,\eps)$ let $t_x = \inf \{t  : x \in U_t\}$.  The set is nonempty set since $x \in U_b$.  From this definition we see that $x \in X_{t_x}$.  Additionally, $a<t_x < b$ by our choice of $\eps$.  Therefore $B(x_0,\eps) \subset A^\mathcal{X}(a,b)$.

\end{proof}

\begin{lemma}\label{lemma:HomotopyLemma}
  Let $X$ be a metric space and let $M$ be a compact topological manifold embedded in $\mathbb R^n$.  There exists a $\delta > 0$ such that for any continuous maps $f,g\colon X \to M $, if $\sup_{x \in X} \|f(x) - g(x)\| < \delta$, then $f$ and $g$ are homotopic.
\end{lemma}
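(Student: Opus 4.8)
The plan is to deduce the statement from the fact that a topological manifold is an absolute neighborhood retract (ANR), combined with the straight‑line homotopy in the ambient $\R^n$ post‑composed with a neighborhood retraction onto $M$. Since in all our applications $M$ arises as the boundary of a precompact normal domain, i.e. as a \emph{compact} $(n-1)$-submanifold of a ball, I would carry out the argument under the assumption that $M$ is compact; this is precisely where the \emph{uniformity} of $\delta$ (its independence of $X$, $f$, and $g$) comes from.

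First I would invoke the ANR property: it is classical that every topological manifold, with or without boundary and regardless of whether the embedding is locally flat, is an ANR, and that an ANR embedded as a closed subset of a metric space is a neighborhood retract. Hence there is an open set $W \subset \R^n$ with $M \subset W$ and a retraction $r \colon W \to M$, i.e. $r$ is continuous and $r|_M = \id_M$. Since $M$ is compact and $W$ is open, $\delta_0 \colonequals \operatorname{dist}(M, \R^n \setminus W) > 0$; set $\delta \colonequals \delta_0$. This number depends only on $M$ and its embedding, as required.

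Next, given continuous maps $f,g \colon X \to M$ with $\sup_{x\in X}\|f(x)-g(x)\| < \delta$, consider for each $x \in X$ the affine segment $s \mapsto (1-s)f(x)+sg(x)$. Every point of this segment lies within $\|f(x)-g(x)\| < \delta_0$ of $f(x) \in M$, hence inside $W$. Therefore
\[
  H \colon X \times [0,1] \to M, \qquad H(x,s) = r\bigl((1-s)f(x) + s g(x)\bigr)
\]
is well defined, and it is continuous as the composition of the continuous map $(x,s)\mapsto (1-s)f(x)+sg(x)$ (whose image lies in $W$) with $r$. Finally $H(x,0) = r(f(x)) = f(x)$ and $H(x,1) = r(g(x)) = g(x)$ because $r$ fixes $M$ pointwise, so $H$ is a homotopy from $f$ to $g$.

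The main point — essentially the only nontrivial input — is the existence of the neighborhood retraction $r$, that is, the ANR property of topological manifolds; everything after that is the standard ``maps into an ANR that are sufficiently close are homotopic'' argument. The one subtlety worth flagging is that compactness of $M$ is genuinely used to extract a single $\delta$ valid for all maps simultaneously: without such a hypothesis the available width of a neighborhood retract need not be bounded below, but compactness of $M$ holds in every situation where we apply the lemma.
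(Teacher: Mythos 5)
Your proof is correct and is essentially the paper's own argument: both obtain an open neighborhood $N\supset M$ with a retraction $r\colon N\to M$ (the paper cites \cite[Theorem A.7]{Hatcher}, you invoke the ANR property of manifolds) and then push the straight-line homotopy into $M$ via $r$. The only difference is that you explicitly flag that compactness of $M$ is needed to extract a uniform $\delta$ — a point the paper's proof uses implicitly when it asserts that $B_\delta(p)\subset N$ for all $p\in M$ — which is a worthwhile clarification rather than a deviation.
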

\begin{proof}
  By \cite[Theorem A.7]{Hatcher}, there exists an open neighborhood $N \subset \mathbb R^n$ of $M$ such that $M$ is a retract of $N$.  If $\delta$ is sufficiently small, then for all $p \in M$, $B_\delta(p) \subset N$.  So the image of the homotopy $H(t,x) = (1-t)f(x) + tg(x)$, for $t \in (0,1)$, is contained in $N$.  Let $r \colon N \to M$ be the retraction map.  Then $r \circ H$ is a homotopy from $f$ to $g$ lying in $M$.
\end{proof}

Motivated by this lemma we fix some terminology on discrete approximations of homotopies
in the setting of manifolds.
\begin{definition}
  Let $M$ be a compact $n$-manifold and let $\delta$ be the parameter given in Lemma \ref{lemma:HomotopyLemma}.  By Lemma \ref{lemma:HomotopyLemma}, if $X$ is a metric space, then any two mappings $f,g \colon X \to M$
  with $\sup_{x \in X} \|f(x) - g(x)\| < \delta$ are homotopic. In such a setting we
  say that \emph{a discrete homotopy approximation of a continuous map} $f \colon [0,1]^k \to M$
  is a discrete collection of points $D \subset [0,1]^k$ together with a mapping
  $g \colon D \to M$ such that there is a continuation $\tilde g$ of $g$ that satisfies
  $\sup_{x \in [0,1]^k} \|f(x) - \tilde g(x)\| < \delta$.
\end{definition}

We next show that any homotopy performed in the union of a foliation $\mathcal{X}$ can be `pulled'
within one of the leaves. For the sake of clarity and readability we state
the main proposition for a general mapping instead of a homotopy.
Note that with minor modifications this argument could be used with
more general topological foliations that are not converging to a point.
\begin{proposition}\label{prop:HomotopyOfHomotopies}
  Let $\mathcal{X} = \{ X_t \}$ be a topological foliation.
  Then for any $k \in \N$ and any continuous mapping
  \begin{align*}
    f \colon \bS^k \times [0,1] \to A^\mathcal{X}(0,1)
  \end{align*}
  there exists a continuous mapping
  \begin{align*}
    g \colon \bS^k \times [0,1] \to X_{t_0}
  \end{align*}
  such that $f$ and $g$ are homotopic in $A^\mathcal{X}(0,1)$.
\end{proposition}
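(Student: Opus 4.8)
The idea is to compress the map $f$ radially toward the single leaf $X_{t_0}$, using the fact that the leaves $X_t$ for small $t$ are all contained in a tiny neighbourhood of the origin, and that the union $A^{\mathcal X}(0,1)$ is an open subset of $\R^n$ (Lemma \ref{lemma:FoliatonAnnuliIsAManifold}). I would not try to build an honest leaf-preserving isotopy — that is essentially what fails for the double suspension example — but instead work up to homotopy and only at the very end land everything inside one leaf.

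First I would fix a break point $(t_0,r_0)$, so that $X_{t_0} \subset B(0,r_0) \subset U$, and replace $[0,1]$ by a small subinterval $[a,b] \ni t_0$ so that $f$ takes values in the foliation annulus $A^{\mathcal X}(a,b)$ — this is legitimate since $\bS^k \times [0,1]$ is compact and its image therefore meets only finitely many leaves, hence lies in some $A^{\mathcal X}(a,b)$. By Lemma \ref{lemma:FoliatonAnnuliIsAManifold} this annulus is an $n$-manifold (an open subset of $\R^n$), so Lemma \ref{lemma:HomotopyLemma} supplies a $\delta>0$ such that any two maps into it that are uniformly $\delta$-close are homotopic \emph{inside} the annulus. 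The plan is then: (i) take a discrete homotopy approximation of $f$ on a fine cubical subdivision of $\bS^k \times [0,1]$, with vertex values in the annulus; (ii) push each vertex value radially — i.e.\ along the path $s \mapsto s\, p$, $s \in [0,1]$, or more carefully along the monotone family of leaves — until it sits on or very near $X_{t_0}$; this is possible because every point of the annulus lies on some leaf $X_t$, the leaves are nested by (F2), and for $t$ small $\diam(X_t) \to 0$ by (F3), so the radial compression does not leave $U$; (iii) the resulting perturbed map $\tilde g$ is $\delta$-close to $f$, hence homotopic to it in $A^{\mathcal X}(0,1)$, and takes values in an arbitrarily small neighbourhood of $X_{t_0}$; (iv) finally use the retraction onto $X_{t_0}$ of such a small tubular neighbourhood (again Lemma \ref{lemma:HomotopyLemma} / the fact that $X_{t_0}$ is a compact topological submanifold of $\R^n$ and so an ANR) to homotope $\tilde g$ into $X_{t_0}$ itself, producing the desired $g \colon \bS^k \times [0,1] \to X_{t_0}$.

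The main obstacle is step (ii): making the radial compression continuous and controlled. A point $x$ of the annulus determines a unique leaf-parameter $t(x)$ with $x \in X_{t(x)}$, but the function $x \mapsto t(x)$ and the "radial retraction" onto a smaller leaf need not be continuous or canonical for a general topological foliation, since the leaves are not assumed homeomorphic. I would handle this not pointwise on the whole annulus but only on the finite vertex set $D$ of the discrete homotopy approximation: each vertex value is a single point, and I only need to move it monotonically through the nested leaves into $B(0,\rho)$ for a prescribed small $\rho$, using condition (F4) that $U = \{0\} \cup \bigcup_t X_t$ is an open neighbourhood of $0$ together with (F2)–(F3); the straight segment from such a point to $0$ can be taken to stay in $U$ for $\rho$ small, and choosing the cube mesh fine enough keeps the perturbed map within $\delta$ of $f$. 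Thus the continuity problem is sidestepped by passing to the discrete model, and reassembling via Lemma \ref{lemma:HomotopyLemma} is routine. One should double-check that the intermediate homotopies all stay inside $A^{\mathcal X}(0,1)$ rather than merely inside $U$; this follows because all the leaves touched during the compression have parameter between the minimal leaf met by $f$ and $t_0$, a closed subinterval of $(0,1)$.

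I expect the write-up to proceed by: choosing the break point and the subinterval $[a,b]$; invoking compactness to get $f$ into $A^{\mathcal X}(a,b)$; fixing $\delta$ from Lemma \ref{lemma:HomotopyLemma}; subdividing the cube $\bS^k \times [0,1]$ finely enough that $f$ varies by less than $\delta/3$ on each cell; defining $\tilde g$ on vertices by radial compression into a $\delta/3$-neighbourhood of $X_{t_0}$ and extending continuously; applying Lemma \ref{lemma:HomotopyLemma} to conclude $f \simeq \tilde g$ in the annulus; and finally retracting onto $X_{t_0}$. This proposition then feeds the proof of Theorem \ref{thm:SphericalityOfFoliation}, presumably by letting one compute $\pi_k(X_{t_0})$ from $\pi_k$ of the punctured neighbourhood, which is a punctured ball, forcing $X_{t_0}$ to be a simply connected homology sphere and hence a sphere by Proposition \ref{prop:WhiteheadHomology}.
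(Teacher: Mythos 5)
There is a genuine gap at the heart of your step (iii). After you push the vertex values of the discrete approximation from wherever they sit in the annulus down to a neighbourhood of $X_{t_0}$, the resulting map $\tilde g$ is \emph{not} uniformly $\delta$-close to $f$: the displacement of a vertex value lying on a leaf $X_s$ is comparable to the distance from $X_s$ to $X_{t_0}$, a fixed macroscopic quantity independent of the mesh and of $\delta$. So Lemma \ref{lemma:HomotopyLemma} cannot be invoked to conclude $f \simeq \tilde g$. The only other candidate for that homotopy is the compression itself, and it fails for two reasons you partly anticipate but do not resolve: the Euclidean segment from a point of the annulus to the origin need not stay in $U$ at all (the sets $U_t$ are merely connected manifolds with boundary, not star-shaped about $0$, and only $U_{t_0}$ is known to sit inside the round ball $B(0,r_0)$); and there is no canonical continuous way to move a point ``along the monotone family of leaves'' to a smaller leaf --- the leaves of a topological foliation as defined here are not assumed homeomorphic to one another, and such a transverse flow is exactly the unique-path-lifting input of Proposition \ref{prop:HomeomorphismFoliation} that is unavailable in this general setting and that this proposition is designed to replace. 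A smaller error compounds this: compactness of $\bS^k\times[0,1]$ gives only that the image of $f$ lies in some $A^{\mathcal X}(a,b)$ with $0<a<b<1$ (it can perfectly well meet uncountably many leaves); it does not make $[a,b]$ a small interval about $t_0$, so you may not assume the image of $f$ is already near $X_{t_0}$.

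The paper avoids any global compression. It takes $I$ to be a maximal subinterval of $(0,1)$ containing $t_0$ for which the conclusion holds for all maps into $A^{\mathcal X}(I)$, and shows $I$ is nonempty, open and closed, hence all of $(0,1)$. The local (openness) step is where your discrete-approximation idea correctly lives: a map into a thin annulus $A^{\mathcal X}(s_0-\varepsilon,s_0+\varepsilon)\subset B^n(X_{s_0},\delta_{s_0})$ is uniformly close to a map into the single nearby leaf $X_{s_0}$, built from a discrete approximation and affine extension in charts of $X_{s_0}$, and there Lemma \ref{lemma:HomotopyLemma} does apply. Your proposal is essentially this local step mistaken for the whole proof; the mechanism for propagating the conclusion across the entire annulus to the one leaf $X_{t_0}$ is missing.
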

\begin{proof}
  Fix $k \in \N$ and note that for any mapping
  \begin{align*}
    f \colon \bS^k \times [0,1] \to A^{\mathcal{X}}(0,1)
  \end{align*}
  there is a positive distance from the image of $f$ to the boundary
  of $A^{\mathcal{X}}(0,1)$.

  We define $I$ to be a maximal, possibly trivial, subinterval of $(0,1)$ such that
  for any mapping
  \begin{align*}
    f \colon \bS^k \times [0,1] \to A^{\mathcal{X}}(I)
  \end{align*}
  there exists a mapping
  \begin{align*}
    g \colon \bS^k \times [0,1] \to X_{t_0}
  \end{align*}
  such that $f$ and $g$ are homotopic.
  We wish to show that in fact $I = (0,1)$, which will prove the claim.
  In order to show this we demonstrate that $I$ is non-empty and both
  open and closed.

  Since $\{ t_0 \} \subset I$, the interval is clearly non-empty.
  To show that $I$ is open, we fix $s_0 \in I$ and cover the $(n-1)$-dimensional manifold $X_{s_0}$
  with open sets that form an atlas. Let $\delta_{s_0}'$ be the Lebesgue number of this open cover.
  Next we cover $X_{s_0}$ with charts of the $n$-manifold $A^{\mathcal{X}}(0,1) \supset X_{s_0}$
  and denote by $\delta_{s_0}''$ the Lebesgue number of this open cover. We
  set $\delta_{s_0} = 8\inv \min\{\delta_{s_0}', \delta_{s_0}''\}$ and fix $\eps > 0$ such that
  $A^{\mathcal{X}}(s_0-\eps, s_0+\eps) \subset B^n(X_{s_0}, \delta_{s_0})$.

  Fix now a mapping
  \begin{align*}
    f \colon \bS^k \times [0,1] \to A^{\mathcal{X}}(I \cup (s_0-\eps, s_0 + \eps)).
  \end{align*}
  Since the domain of $f$ is compact, $f$ is uniformly continuous, so there
  exists $\delta > 0$ such that $\| f(x) - f(y) \| < \eps/4$ for all
  $x,y$ with $d(x,y) < \delta$.
  Let $D \subset \bS^k \times [0,1]$ be a discrete set such that
  $B(D,\delta/4) = \bS^k \times [0,1]$ and denote
  $\hat f \colonequals f|_D$.
  By our selection of $\eps$ we can now define a mapping
  \begin{align*}
    \hat g \colon D \to X_{s_0}
  \end{align*}
  such that $d_\infty(\hat f, \hat g) < \eps/4$, where $d_\infty$ denotes the supremum norm.
  
  Now, since $\eps < \delta_{s_0}'/8$, we see that the
  mapping $\hat g$ can be extended through affine continuations in the
  charts of $X_{s_0}$ into a continuous map
  \begin{align*}
    g \colon \bS^k \times [0,1] \to X_{s_0}.
  \end{align*}
  We immediately see that we also have
  $d_\infty(f,g) < \eps/2$ and
  so by the definition of $\eps$ we see
  that $f$ and $g$ are homotopic as we can
  use the affine line homotopy within
  the charts of $A^{\mathcal{X}}(0,1) \supset X_{s_0}$.
  Thus we conclude that
  $$
  (s_0 - \eps, s_0 + \eps)
  \subset I
  $$
  and so $I$ is open.
 
Finally we need to show that $I$ is closed. We may suppose that $b \in (0,1)$ is such that $(b-\epsilon,b) \subset I$ for some $\epsilon > 0$.
Suppose $f\colon \bS^k \times [0,1] \to A^{\mathcal{X}}([b-\epsilon,b])$ is a continuous map. Then $f$ maps into $U_b$. Since $U_b$ is a manifold with boundary, there exists a number $\delta' > 0$ such that we can form an atlas for $U_b$ with Lebesgue number $\delta'$.  The same argument as above gives a discrete set $D\subset S^k \times [0,1]$, with $B(D, \delta) = S^k\times [0,1]$, and a map 
\begin{align*}
    \hat g \colon D \to A^{\mathcal{X}}(b-\epsilon,\epsilon)
\end{align*}
such that $\hat g$ is uniformly close to $\hat f := f|_D$.  That is, if $d(x,y) < \delta/4$, then $\|\hat g(x) - \hat g(y)\| < \delta'/2$.
Since $U_b$ is a manifold with boundary we can extend $\hat g$ to a map
\begin{align*}
    g \colon S^k\times [0,1] \to   A^{\mathcal{X}}(b-\epsilon,b).
\end{align*}
Note that if $U_b$ were just an open set and not a manifold with boundary, it would not be clear that the image of $g$ would not intersect $X_b$.
Since $U_b$ is a manifold with boundary, the extension can be done affinely in the charts near the boundary and therefore will not intersect the boundary.

We still have that $f$ and $g$ are uniformly close and so $f$ is homotopic to $g$. 
Thus we can conclude that $b \in I$, and the proof is complete.

\end{proof}

\begin{corollary}\label{coro:Homotopyspheres}
  Let $\mathcal{X} = \{ X_t \}$ be a topological foliation with $(t_0,r_0)$ its break point. 
  Then for any $k = 1, \ldots, n-2$ and $t \leq t_0$, $\pi_k(X_{t}) = 0$.
\end{corollary}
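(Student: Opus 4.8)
The plan is to prove that, after possibly shrinking the break parameter, every leaf $X_t$ with $t \le t_0$ is $(n-2)$-connected. First I would dispose of $n=2$, where the statement is vacuous since then $k \le n-2 = 0$, and assume $n \ge 3$. Next I would observe that the sets $\overline{U_t}$ shrink to the origin — this follows from (F1)--(F4), since any accumulation point of a sequence $x_j \in \overline{U_{t_j}}$ with $t_j \to 0$ lies in $\bigcap_t \overline{U_t} = \{0\}$ — so that, using (F3) together with $A^\mathcal{X}(0,1) = U\setminus\{0\} \supset B(0,r_0)$, I may replace the given break point by $(t_0',r_0)$ with $t_0' \le t_0$ small enough that
\[
  X_s \subset B(0,r_0)\setminus\{0\} \subset A^\mathcal{X}(0,1) \qquad \text{for all } s \le t_0'.
\]
Renaming $t_0'$ as $t_0$, I fix $t \le t_0$ and an integer $1 \le k \le n-2$, and let $\gamma\colon\bS^k\to X_t$ be continuous; the goal is $[\gamma]=0$ in $\pi_k(X_t)$. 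Since $B(0,r_0)\setminus\{0\}$ is homotopy equivalent to $\bS^{n-1}$ and $k \le n-2 < n-1$, the map $\gamma$ is null-homotopic inside $B(0,r_0)\setminus\{0\}$, hence inside $A^\mathcal{X}(0,1)$; this gives a homotopy $H\colon \bS^k\times[0,1]\to A^\mathcal{X}(0,1)$ with $H(\cdot,0)=\gamma$ and $H(\cdot,1)$ constant, which maps the subcomplex $K := \bS^k\times\{0,1\}$ into the single leaf $X_t$.

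Then I would push this null-homotopy into $X_t$. Running the argument of Proposition \ref{prop:HomotopyOfHomotopies} (equivalently, Corollary \ref{coro:HomotopyOfHomotopies}) with the chosen target leaf $X_t$, carried out rel $K$ up to a controlled error — since $H(K)\subset X_t$, the values of the discrete approximation at net points lying in $K$ need not be perturbed, and the chart-wise affine extensions and straight-line homotopies over $K$ can be performed within $X_t$ — I would produce a map $g\colon\bS^k\times[0,1]\to X_t$, homotopic to $H$ inside $A^\mathcal{X}(0,1)$, with $\sup_{x\in K}\|g(x)-H(x)\|$ arbitrarily small. By Lemma \ref{lemma:HomotopyLemma} applied to the manifold $X_t$, this makes $g(\cdot,0)$ homotopic in $X_t$ to $\gamma$ and $g(\cdot,1)$ homotopic in $X_t$ to a constant. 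Reading $g$ itself as a homotopy within $X_t$ from $g(\cdot,0)$ to $g(\cdot,1)$, I conclude $\gamma \simeq g(\cdot,0) \simeq g(\cdot,1) \simeq \mathrm{const}$ in $X_t$, so $\gamma$ is null-homotopic in $X_t$ and $[\gamma]=0$ in $\pi_k(X_t)$. Since $\gamma$, $k$, and $t\le t_0$ are arbitrary, this gives $\pi_k(X_t)=0$ for all $1\le k\le n-2$ and $t\le t_0$.

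The main obstacle is the refinement used in the second step: one must verify that the homotopy dragging a map into a single leaf, furnished by Proposition \ref{prop:HomotopyOfHomotopies}, can be arranged so that it remains inside that leaf (up to a small homotopy there) over the part of the domain already mapping into it, and that the leaf used as target may be taken to be the specific leaf $X_t$ containing $\gamma(\bS^k)$. This should follow by inspecting the discrete-approximation construction in the proof of that proposition — one leaves the already-correct net values fixed and does the chart extensions inside the leaf — but it has to be propagated through the staged (``$I$ open and closed'') structure of the argument, and is the only delicate point. I want to stress that the black-box statements of Proposition \ref{prop:HomotopyOfHomotopies} and Corollary \ref{coro:HomotopyOfHomotopies} are by themselves insufficient: unwound naively they only yield that $\gamma$ is null-homotopic in the larger space $A^\mathcal{X}(0,1)$ — a fact already evident from $\gamma(\bS^k)\subset B(0,r_0)\setminus\{0\}$ — without pinning the null-homotopy down inside $X_t$.
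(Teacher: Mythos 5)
Your proposal follows the paper's proof: null-homotope $\gamma$ in $B(0,r_0)\setminus\{0\}$, invoke Proposition \ref{prop:HomotopyOfHomotopies}/Corollary \ref{coro:HomotopyOfHomotopies} with target leaf $X_t$ to push the null-homotopy into that leaf, and read the resulting cylinder map as a null-homotopy of $\gamma$ inside $X_t$. Your observation that the black-box statement of Corollary \ref{coro:HomotopyOfHomotopies} alone does not pin the ends of $\tilde H$ down in $X_t$ is correct and worth making explicit: the paper's one-line assertion that ``$\tilde H$ takes $\alpha$ to the constant path as a homotopy in $X_t$'' silently relies on the uniform closeness $d_\infty(f,g)<\eps/2$ established inside the proof of Proposition \ref{prop:HomotopyOfHomotopies} together with Lemma \ref{lemma:HomotopyLemma}, which is exactly the rel-$K$ control you supply, and the remaining point you flag (propagating that control through the open-and-closed interval argument) is likewise left implicit in the paper.
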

\begin{proof}
  Fix $t < t_0$ and let $\alpha \colon \bS^k \to X_t$, $k \in 1, \ldots, n-2$.
  Denote by $\iota \colon X_t \to B(0,r_0)$ the inclusion map.
  Since $\pi_k(B(0,r_0) \setminus \{ 0 \}) = 0$, there exists a homotopy
  \begin{align*}
    H \colonequals \bS^k \times [0,1] \to B(0,r_0) \setminus \{ 0 \}
  \end{align*}
  taking $\alpha$ to the constant map $\alpha(0)$.
  By Proposition \ref{prop:HomotopyOfHomotopies} there exists a homotopy
  \begin{align*}
    \tilde H \colonequals \bS^k \times [0,1] \to X_{t}
  \end{align*}
  such that $H$ and $\tilde H$ are homotopic; especially
  the homotopy $\tilde H$ takes $\alpha$ to the constant map as a homotopy in
  $X_t$. Thus $\pi_k(X_t) = 0$.
\end{proof}

We are finally ready to prove the main result of this section.
\begin{proof}[Proof of Theorem \ref{thm:SphericalityOfFoliation}]
  As closed $(n-1)$-submanifolds of $\R^n$, the spaces $X_t$ are all orientable (see \cite[Theorem 3.26]{Hatcher})
  so $H_{n-1}(X_t) = \Z$ for all $t \in (0,1)$. On the other hand by
  Corollary \ref{coro:Homotopyspheres}, if $t \le t_0$ for $(t_0,r_0)$ a break point of $\mathcal{X}$, then
  $\pi_k(X_t) = \pi_k(\bS^{n-1})$ for $k = 0, \ldots , n-2$.
  This, combined with the Hurewicz isomorphism theorem implies that the
  spaces $X_t$ with $t \leq t_0$ are homotopy spheres, and thus topological
  spheres by Theorem \ref{thm:WhiteheadHomology}.
\end{proof}

\section{Reverse implication}
\label{sec:ReverseImplication}

A crucial step in the proof of our main result, Theorem \ref{thm:maintheorem} was to detect
that the boundaries of sufficiently small normal domains are manifolds
when the image of the branch set has a PL-structure.  Conversely the
regularity of the boundaries of normal domains is strongly connected
to the structure of both the branch set and the mapping in general. This was noted
already by Martio and Srebro in dimension three.

We begin with a simple example demonstrating that we cannot hope the PL property of $f(B_f)$ to be equivalent to the property of boundaries
of normal domains being manifolds.
\begin{example}\label{example:CW-complex}
  Let $w \colon \R^3 \to \R^3$ be the standard 2-to-1 winding
  around the $z$ axis. Denote by $h \colon \R^3 \to \R^3$
  a homeomorphism that takes the $z$-axis to the image of the function
  $t \mapsto (0,t^2 \cos(t\inv),t)$. Also define $f \colonequals w \circ h \circ w$.
  The branch set of $f$ is the $z$-axis union with the preimage by $h$ of the $z$-axis.
  So the image of the branch set of $f$ is the $z$-axis union with the image of $w \circ h$ of the $z$-axis.  This set will be homeomorphic to infinitely many
  connected circles converging to the origin and in particular it will not be an $(n-2)$-dimensional simplicial complex.
  However, the mapping $f$ has the property that the boundaries
  of sufficiently small normal neighborhoods are manifolds.
\end{example}
\begin{figure}[h] 
  \includegraphics[width=1\textwidth]{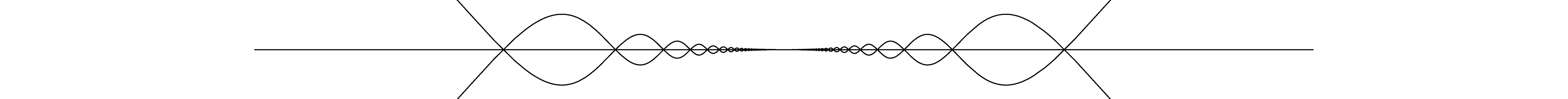}
  \caption{The image of the branch set in Example \ref{example:CW-complex}.}
  \label{fig:cosineexample}
\end{figure}

The branch set and its image in the example above do have some regularity --
even though $f(B_f)$ does not have a PL structure, it is a CW-complex. We remark that
the regularity of $f(B_f)$ being a CW-complex is not enough for our main results.
The quasiregular mappings constructed by Heinonen and Rickman in
\cite{HeinonenRickman} and \cite{HeinonenRickman2} also have CW-complex branch sets
but otherwise behave pathologically. In particular, the boundaries of normal
domains are not manifolds in those examples.

Example \ref{example:CW-complex} demonstrates that with just the assumption that the boundaries
of normal domains are manifolds we cannot deduce that the mapping is locally
a cone-type map. Instead we need to study a weaker notion of the mapping being
locally a \emph{path-type map}.
\begin{definition}\label{def:Path-TypeMap}
  Let $\Omega \subset \R^n$ be a domain and let $f \colon \Omega \to \R^n$
  be an open and discrete map. We say that $f$ is a \emph{path-type mapping} at $x_0 \in \Omega$
  or that $f$ \emph{is a path of open and discrete maps at $x_0 \in \Omega$}
  if there exists a radius $r_0 > 0$ and  a path $t \mapsto f_t$ of open and discrete maps
  $f_t \colon \bS^{n-1} \to \bS^{n-1}$ such that
  \begin{align*}
    f(x)
    = \| x_0 - x \| f_{\| x_0 - x \|}\left(\frac{x-x_0}{\| x -x_0\|}\right)
  \end{align*}
  for all $x \in B(x_0,r)$.

  We use similar terminology also when $f$ and the mappings in the path
  are quasiregular mappings.
\end{definition}

With the aid of the results in Section \ref{sec:HomotopiesOfHomotopies}
we can now prove the following proposition.
\begin{proposition}\label{prop:ReverseTopDim}
  Let $\Omega \subset \R^n$ be a domain and let $f \colon \Omega \to \R^n$
  be an open and discrete map (or a quasiregular mapping.) Suppose that
  for any $x \in \Omega$ and for all $r < r_x$ small enough $U(x,f,r)$
  is a manifold with boundary. Then for every $x_0 \in \Omega$,
  $f$ is a path of open and discrete maps (or quasiregular mappings) at $x_0$.
\end{proposition}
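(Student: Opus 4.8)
The plan is to fix a point $x_0 \in \Omega$ and reduce, exactly as in Section \ref{sec:Bdry}, to understanding the structure of the normal neighborhoods $U(x_0,f,r)$ as $r \to 0$. By hypothesis, for small $r$ the closure $\overline{U}(x_0,f,r)$ is a compact $n$-manifold with boundary, so the boundaries $X_r \colonequals \partial U(x_0,f,r)$ are closed $(n-1)$-submanifolds of $\R^n$, and by the normality of the domains they are pairwise disjoint and nested, with $\diam X_r \to 0$. Thus the collection $\{X_r\}$ is a topological foliation in the sense of Section \ref{sec:HomotopiesOfHomotopies}, and Theorem \ref{thm:SphericalityOfFoliation} applies: for $r$ below a break radius, each $X_r$ is a topological $(n-1)$-sphere. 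First I would record this, so that for small $r$ we have $\overline{U}(x_0,f,r) \simeq \cone(X_r) \simeq \cone(\bS^{n-1})$, a closed $n$-ball, with $x_0$ the cone point.

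Next I would upgrade Lemma \ref{lemma:MappingAtTheBoundary}: the restriction $f|_{X_r}\colon X_r \to \partial B(f(x_0),r)$ is a branched cover between $(n-1)$-spheres. Normalizing the target radius to $1$ by the dilation $\tfrac{1}{r}$, one obtains for each small $r$ a branched cover $f_r \colon \bS^{n-1} \to \bS^{n-1}$ (after choosing homeomorphic identifications $X_r \simeq \bS^{n-1}$ and $\partial B(f(x_0),r) \simeq \bS^{n-1}$ coming from the radial/spherical structure). The content of being a \emph{path-type} map is that these identifications can be made \emph{coherently in $r$}, i.e., that the radial lift structure matches up across the levels so that $f$ factors as $x \mapsto \|x_0-x\|\, f_{\|x_0-x\|}(x/\|x\|)$ for a genuinely continuous path $t \mapsto f_t$. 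To produce the coherent identification I would use the radial line segments $\beta(t) = (r-t)v + f(x_0)$ in the target: by the path-lifting property of branched covers (no uniqueness needed here, unlike in Proposition \ref{prop:UniqueLiftsTopDim}) together with the foliation of $\overline{U}(x_0,f,r)\setminus\{x_0\}$ by the $X_s$, $s \le r$, each such segment lifts so as to cross the leaves monotonically; this lets one build a homeomorphism $\overline{U}(x_0,f,r_0)\setminus\{x_0\} \simeq \bS^{n-1}\times(0,r_0]$ conjugating $f$ on that set to a map of the form $(w,t) \mapsto t\, f_t(w)$, with $f_t$ depending continuously on $t$ because $f$ is continuous and the leaves vary continuously in the Hausdorff metric.

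I expect the main obstacle to be precisely the continuity and consistency of the path $t \mapsto f_t$ — that is, promoting the level-by-level branched covers $f_r$ into a single continuous family after a fixed choice of trivialization $\overline{U}\setminus\{x_0\}\simeq \bS^{n-1}\times(0,r_0]$. Unlike the situation in Proposition \ref{prop:HomeomorphismFoliation}, we do not have unique radial lifts, so we cannot canonically transport a chosen homeomorphism $X_{r_0}\simeq\bS^{n-1}$ to all smaller leaves; one must instead argue that any two trivializations differ by an isotopy and use a compactness/partition-of-unity argument over the parameter interval, glueing local trivializations (which exist because $\overline U\setminus\{x_0\}$ is a manifold and the $X_s$ are locally a product by Lemma \ref{lemma:FoliatonAnnuliIsAManifold}-type reasoning). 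The quasiregular case would follow by the same scheme: the foliation by the spheres $X_r$ and the radial factorization are compatible with the analytic definition, so the resulting path $f_t$ consists of quasiregular maps with a uniform bound on the dilatation. Finally I would note that the degenerate possibility $f_t \equiv \id$ (empty branch) is allowed, so path-type genuinely contains the cone-type maps of Theorem \ref{thm:maintheorem} as the special case where $f_t$ is independent of $t$.
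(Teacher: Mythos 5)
Your proposal follows essentially the same route as the paper's own proof: verify that the boundaries $\partial U(x_0,f,r)$ form a topological foliation, invoke Theorem \ref{thm:SphericalityOfFoliation} to conclude they are spheres, and set $f_t = f|_{\partial U(x_0,f,t)}$ after conjugating by homeomorphisms. The coherence/continuity issue for the family $t\mapsto f_t$ that you flag as the main obstacle is real, but the paper's proof dispatches it in a single clause (``after conjugating by homeomorphisms''), so your more cautious treatment of the trivialization is, if anything, more detailed than the published argument rather than a divergence from it.
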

\begin{proof}
  For any fixed $x_0 \in \Omega$ it is immediate to see that for small enough $r_0 > 0$
  the boundaries $\partial U(x_0,f,r)$ with $r < r_0$ form a topological foliation
  since we assumed them to be manifolds.
  Thus by Theorem \ref{thm:SphericalityOfFoliation} each $\partial U(x_0,f,r)$
  is a topological sphere and we may set $f_t = f|_{\partial U(x_0,f,r)}$.
  After conjugating by homeomorphisms $(f_t)$ becomes a path of open and discrete maps
  between spheres and thus a path-type map at $x_0$.
\end{proof}

In higher dimensions it is again natural to ask about the structure
and behavior of the boundaries of lower dimensional normal domains.
\begin{lemma}\label{lemma:ReverseMidlevelLemma}
  Let $\Omega \subset \R^n$ be a domain and let $f \colon \Omega \to \R^n$
  be an open and discrete map. Suppose that for some $k\in \{2, \ldots, n-2\}$
  all the $(k+1)$-dimensional normal domains are manifolds with boundary.
  Then for any $V \in \mathcal{U}_{k+1}$ the restriction
  $f|_{\partial V}$ is locally a path-type map.
\end{lemma}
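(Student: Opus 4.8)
The plan is to reduce the statement to Proposition \ref{prop:ReverseTopDim} by carrying that argument out inside $V$; two preliminary facts do the bookkeeping. First, for $V \in \mathcal{U}_{k+1}$ the restriction $f|_V$ is a branched cover of $V$ onto a $(k+1)$-sphere. Indeed, writing $V = \partial U'$ for a normal domain $U'$ of $f|_{V'}$ with $V' \in \mathcal{U}_{k+2}$ and applying Lemma \ref{lemma:MappingAtTheBoundary} inside a Euclidean chart of the target, $f|_V$ maps onto a small geodesic sphere in the image of $f|_{V'}$; iterating down from the top level -- where $f$ restricted to a normal-domain boundary maps onto $\partial B(f(x),r) \simeq \bS^{n-1}$ -- this geodesic sphere is homeomorphic to $\bS^{k+1}$. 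Second, $V$ is a $(k+1)$-manifold: for each $x_0 \in V$ the hypothesis makes $\overline U(x_0,f|_V,r)$ a $(k+1)$-manifold with boundary $\partial U(x_0,f|_V,r)$ for all small $r$, hence $x_0$ has a neighborhood in $V$ homeomorphic to an open ball of $\R^{k+1}$.

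With this, fix $x_0 \in V$ and a chart $\psi$ taking a neighborhood of $x_0$ in $V$ onto an open ball of $\R^{k+1}$ about $\psi(x_0)$. For all small $r$ the sets $\psi(\partial U(x_0,f|_V,r))$ are pairwise disjoint closed $k$-submanifolds of $\R^{k+1}$ bounding the nested, shrinking manifolds-with-boundary $\psi(\overline U(x_0,f|_V,r))$, and their union together with $\psi(x_0)$ is an open neighborhood of $\psi(x_0)$; thus they constitute a topological foliation of a punctured neighborhood of $\psi(x_0)$ in the sense of Section \ref{sec:HomotopiesOfHomotopies}. Since $k+1 \ge 3$, Theorem \ref{thm:SphericalityOfFoliation} yields a break radius $r_1$ with $\partial U(x_0,f|_V,r) \simeq \bS^{k}$ for all $r \le r_1$. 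From here the proof of Proposition \ref{prop:ReverseTopDim} goes through unchanged, with $(f,n)$ replaced by $(f|_V,k+1)$: by Lemma \ref{lemma:MappingAtTheBoundary} each restriction $f|_{\partial U(x_0,f|_V,r)} \colon \partial U(x_0,f|_V,r) \to \partial B(f(x_0),r)$ is a branched cover between $k$-spheres, and conjugating all the domains and all the targets to fixed copies of $\bS^{k}$ turns $r \mapsto f|_{\partial U(x_0,f|_V,r)}$ into a path of branched covers $\bS^{k} \to \bS^{k}$; reading this path off the foliation parameter puts $f|_V$ near $x_0$ in the normal form of Definition \ref{def:Path-TypeMap}. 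As $x_0 \in V$ was arbitrary, $f|_V$ is locally a path-type map.

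The step that I expect to require the most care -- and which is treated just as tersely in Proposition \ref{prop:ReverseTopDim} -- is arranging the conjugating homeomorphisms in the last step so that they depend continuously on $r$, so that one really obtains a \emph{path} of self-maps of a single $\bS^{k}$ rather than merely an unparametrized family of spherical leaves. In the simplicial setting of Section \ref{sec:Bdry} this continuity came for free, inherited from the canonical radial lifts of Proposition \ref{prop:HomeomorphismFoliation}; here no such lifts exist, so the continuous choice has to be manufactured from the product structure of the foliation annuli (Lemma \ref{lemma:FoliatonAnnuliIsAManifold}) together with the full manifold-with-boundary hypothesis. This is precisely the place where that hypothesis -- rather than the weaker assumption that the leaves $\partial U(x_0,f|_V,r)$ are merely $k$-submanifolds of $\R^{k+1}$ -- is genuinely needed.
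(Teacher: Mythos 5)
Your proof is correct and takes the same route as the paper, whose entire proof of this lemma is the one-line remark that the argument is identical to that of Proposition \ref{prop:ReverseTopDim}; you have simply carried out that reduction in detail (verifying that $V$ is a $(k+1)$-manifold, that the normal-domain boundaries form a topological foliation in a chart, and reading the conclusion as concerning $f|_V$ for $V \in \mathcal{U}_{k+1}$, which is the sensible interpretation of the statement). The continuity issue you flag at the end is real but is left equally implicit in the paper's own proof of Proposition \ref{prop:ReverseTopDim}, so it is not a gap relative to the paper's argument.
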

\begin{proof}
  The proof is identical to the proof of Proposition \ref{prop:ReverseTopDim}.
\end{proof}

The above lemma has a natural corollary when more `levels' of lower dimensional
normal domains are manifolds. To state the corollary we define that a mapping $f \colon \Omega \to \R^n$ is a \emph{$2$-repeated path}
at $x_0 \in \Omega$ if $f$ is a at $x_0$ a path of path-type open and discrete maps.
Likewise a mapping $f$ is a \emph{$k$-repeated path} if it is locally a path
of $(k-1)$-repeated paths.
\begin{corollary}\label{coro:RepeatedPath}
  Let $\Omega \subset \R^n$ be a domain and let $f \colon \Omega \to \R^n$
  be an open and discrete map. Suppose that for $k$ consecutive integers all the
  normal domains of those dimensions are manifolds with boundary.
  Then $f$ is an $k$-repeated path at $x_0$.
\end{corollary}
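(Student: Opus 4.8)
The plan is an induction that peels off one level of the normal-domain hierarchy of $f$ at a time, the case of a single (top) level being Proposition \ref{prop:ReverseTopDim}. We may assume that the block of consecutive dimensions named in the hypothesis is the top block $n, n-1, \ldots$: if the top dimension $n$ were excluded, the hypothesis would carry no information about $f$ near $x_0$, only about restrictions of $f$ to certain lower-dimensional normal domains. Fix $x_0 \in \Omega$.

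First, the base case. If the top-dimensional normal domains $\overline U(x,f,r)$ are manifolds with boundary, then for small $r$ the boundaries $\partial U(x_0,f,r)$ satisfy (F1)--(F4) and hence form a topological foliation of a punctured neighborhood of $x_0$; by Theorem \ref{thm:SphericalityOfFoliation} each is a topological sphere, and the foliation produces, as in Proposition \ref{prop:HomeomorphismFoliation}, homeomorphisms conjugating $f$ near $x_0$ to the cone-type map assembled from the path $t \mapsto f|_{\partial U(x_0,f,r)}$ of branched covers $\bS^{n-1}\to\bS^{n-1}$. Thus $f$ is a path-type map at $x_0$; this is exactly Proposition \ref{prop:ReverseTopDim}.

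For the inductive step, assume the result for one fewer top level. Granting the top level, the base case presents $f$, up to homeomorphism near $x_0$, as a path of branched covers $f_t \colon \bS^{n-1}\to\bS^{n-1}$, and by construction each $f_t$ is, up to homeomorphism, a restriction $f|_V$ with $V = \partial U(x_0,f,r) \in \mathcal U_{n-1}$. The key point is that the normal-domain hierarchy $\mathcal U_{n-2}, \ldots, \mathcal U_1$ attached to $f|_V$ is literally the tail of the hierarchy attached to $f$, so the remaining consecutive manifold hypotheses apply to each $f_t$ unchanged; equivalently, one invokes Lemma \ref{lemma:ReverseMidlevelLemma} at each of the levels $n-1, n-2, \ldots$ in turn. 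The inductive hypothesis therefore recognizes every $f_t$ as a repeated path of branched covers on $\bS^{n-1}$, and assembling this one-parameter family of decompositions over $t$ exhibits $f$ as a repeated path of branched covers at $x_0$ of the depth recorded in the statement.

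The main obstacle is not the combinatorics of the induction but the continuity bookkeeping needed to realize the nested families as a single map of the precise form demanded by Definition \ref{def:Path-TypeMap}. At each level the conjugating homeomorphism is extracted from a foliation (via Theorem \ref{thm:SphericalityOfFoliation} and Proposition \ref{prop:HomeomorphismFoliation}), and one must verify that these homeomorphisms, together with the radii $r_x$ supplied by the hypothesis, can be chosen to depend continuously and uniformly on the parameters of all the higher levels, so that a single radius $r_0$ governs $x_0$ and the output is genuinely ``path of path of $\cdots$'' rather than an ad hoc composition. This uniformity is exactly where the hypothesis of a manifold \emph{with boundary}, rather than merely an open set, is used, for the same reason as in the proof of Proposition \ref{prop:HomotopyOfHomotopies}: only then can the needed extensions and straight-line homotopies be carried out affinely in boundary charts and kept off the removed centers. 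With this in hand the induction closes with no further input.
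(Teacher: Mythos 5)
Your proof is correct and follows essentially the route the paper intends: the paper offers no written argument for this corollary, presenting it as an immediate consequence of Proposition \ref{prop:ReverseTopDim} (the base case at the top dimension) and Lemma \ref{lemma:ReverseMidlevelLemma} applied level by level, which is precisely your induction. Your additional remarks on the uniformity of the conjugating homeomorphisms across levels, and on sorting out the off-by-one ambiguity in how the ``$k$ consecutive integers'' relate to the depth $(n-k)$, supply detail the paper leaves implicit.
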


Note that for path-type maps,
since open and discrete maps are locally uniformly continuous,
we necessarily have for any $t_0$ that $f_t \to f_{t_0}$ uniformly
when $t \to t_0$. This in particular implies by basic degree theory (see \cite{Rickman-book})
that if $x_t \in B_{f_t}$ for all $t > t_0$ and $x_t \to x_0$ as $t \to t_0$,
then $x_0 \in B_{f_{t_0}}$. Similarly we see that if $x_0 \in B_{f_{t_0}}$,
then there must exist a continuous path $t \mapsto x_t \in B_{f_t}$ such that
$x_t \to x_0$ as $t \to t_0$. So if
$f$ is a path-type map at $x_0 \in B_{f}$, then $\dim_{\mathcal{T}}(B_f) \geq 1$,
and a similar conclusion holds under the assumptions of Lemma \ref{lemma:ReverseMidlevelLemma}.
Moreover, we can deduce the following:
\begin{corollary}\label{coro:DimensionEstimates}
  Let $\Omega \subset \R^n$ be a domain and let $f \colon \Omega \to \R^n$
  be an open and discrete map. Suppose that for some $k = 2, \ldots, n-2$
  all the normal domains of dimension less than or equal to $k$ are manifolds with boundary.
  Then $\dim_{\mathcal{T}}(B_f) \geq k$.
\end{corollary}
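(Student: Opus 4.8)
The plan is to feed the path-type structure of this section into the dimension-increasing mechanism already sketched in the paragraph preceding the statement, iterating it once per path level. First I would record the structural input: under the hypothesis, Corollary~\ref{coro:RepeatedPath} (equivalently, iterating Proposition~\ref{prop:ReverseTopDim} and Lemma~\ref{lemma:ReverseMidlevelLemma}) shows that $f$ is locally an $(n-k)$-repeated path. Since the assertion is vacuous when $B_f=\emptyset$, I would pick a branch point $x_0\in B_f$, normalize $x_0=0$, and pass to a ball $B(0,r_0)$ on which $f(x)=\|x\|\,f_{\|x\|}(x/\|x\|)$ for a path $t\mapsto f_t$ of $(n-k-1)$-repeated paths $f_t\colon\bS^{n-1}\to\bS^{n-1}$.

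The core is an induction on $j$ of the following claim: if $g$ is a branched cover between $m$-manifolds that is locally a $j$-repeated path with $0\le j\le m-2$ and $B_g\ne\emptyset$, then $\dim_{\mathcal{T}}(B_g)\ge j$; the corollary is the case $g=f$, $m=n$, $j=n-k$. The base case $j=0$ is immediate. For the inductive step take $g$ locally a $j$-repeated path ($j\ge1$, hence $m\ge3$), fix a branch point $0\in B_g$, and write $g(x)=\|x\|\,g_{\|x\|}(x/\|x\|)$ on $B(0,r_0)$ with fiber maps $g_t\colon\bS^{m-1}\to\bS^{m-1}$ that are $(j-1)$-repeated paths. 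A preimage count shows that $t\theta\in B_g$ exactly when $\theta\in B_{g_t}$, so $B_g\cap(B(0,r_0)\setminus\{0\})=\{\,t\theta: 0<t<r_0,\ \theta\in B_{g_t}\,\}$, while the degree $\deg(g_t)$, constant in $t$, equals the degree of $g$ on a small normal neighborhood of $0$ and is therefore at least $2$. Since $m-1\ge2$, the sphere $\bS^{m-1}$ is simply connected, so an unbranched branched cover onto it is a covering with connected total space and hence a homeomorphism; thus, if $B_{g_t}$ were empty, $g_t$ would be a homeomorphism, contradicting $\deg(g_t)\ge2$. Hence $B_{g_t}\ne\emptyset$ for \emph{every} $t\in(0,r_0)$, and the inductive hypothesis (applicable because $j-1\le(m-1)-2$) yields $\dim_{\mathcal{T}}(B_{g_t})\ge j-1$ for all such $t$.

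It then remains to deduce $\dim_{\mathcal{T}}\big(\{\,t\theta:\theta\in B_{g_t}\,\}\big)\ge j$ from the facts that these fibers are nonempty, of dimension $\ge j-1$, and---by the degree-theoretic reasoning recorded before the statement---vary upper-semicontinuously and admit continuous local sections. This is precisely the mechanism used there to get $\dim_{\mathcal{T}}(B_f)\ge 1$ for a genuine path-type map at a branch point (a continuous section $t\mapsto\theta_t\in B_{g_t}$ produces an arc $t\mapsto t\theta_t$ of branch points of $g$), now applied with the $(j-1)$-dimensional sets $B_{g_t}$ in place of single points. I expect this last step to be the real obstacle to a fully rigorous write-up: making ``sweeping a $(j-1)$-dimensional fiber over an interval of radii yields a $j$-dimensional set'' precise requires the family $\{B_{g_t}\}_t$ to admit continuous local sections over a \emph{fixed} radius interval, depending continuously---and, for an honest embedded $j$-cell, injectively---on the base point, a fiber-bundle-type regularity that semicontinuity of branch sets alone does not provide. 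I would handle it by strengthening the induction hypothesis to ``$B_g$ contains a subset homeomorphic to $[0,1]^{j}$'' and extending a $(j-1)$-cell from one fiber $B_{g_{t_1}}$ continuously over a short interval $[t_1-\delta,t_1+\delta]$ via local sections on the compact cell---the delicate point being to keep the extension injective---or else simply invoke the sharper statement announced in the introduction, that a path of branched covers with nonempty branch sets has branch set of topological dimension one more than that of its members.
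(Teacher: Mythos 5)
Your proposal follows essentially the same route as the paper, which offers no separate proof of this corollary beyond the paragraph preceding it: one combines Corollary~\ref{coro:RepeatedPath} (the $(n-k)$-repeated path structure) with an iteration, once per path level, of the degree-theoretic observation that a path of branched covers with nonempty branch sets has branch set of topological dimension one higher than that of its fibers. Your explicit argument that each $B_{g_t}$ is nonempty (via $\deg(g_t)\ge 2$ and simple connectivity of $\bS^{m-1}$) and your flagging of the delicate ``sweeping'' step are, if anything, more careful than the source, which simply asserts the dimension increase.
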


\section{Construction of a quasiregular mapping}
\label{sec:EdenMap}

Our main results, Theorem \ref{thm:maintheorem} and Theorem \ref{thm:GlobalMainTheorem}, can be used to produce examples of quasiregular mappings between manifolds.  We give one such construction in this section. 
\begin{proof}[Proof of Theorem \ref{prop:projective}]
  We first note that the
  manifold $\mathbb{CP}^1$ is homeomorphic to $\widehat{\mathbb C}$ and 
  $(\widehat{\mathbb C})^n$ is quasiregularly elliptic via e.g.\ the Alexander mapping, see \cite{Rickman-book}.
  Additionally, the composition of quasiregular mappings is still quasiregular.
  Thus in order to prove quasiregular ellipticity of $\mathbb{CP}^n$,
  it suffices to construct a quasiregular mapping $(\mathbb{CP}^1)^n \to \mathbb{CP}^n$. 
  
  We first construct an open and discrete map $f \colon (\mathbb{CP}^1)^n \to \mathbb{CP}^n$.  Consider the polynomial
  \begin{align*}
      p(u,v)=(z_1 u + w_1 v)\dots(z_n u + w_n v).
  \end{align*}
  The coefficients of each term are homogeneous polynomials in $([z_i:w_i])_{i=1}^n$,
  so in particular the coefficients define a continuous map $f \colon (\mathbb{CP}^1)^n\to \mathbb{CP}^n$.
  By the definition of the mapping, $f$ is locally injective outside the set 
  \begin{align*}
    B_f
    = \{([z_1:w_1],\dots,[z_n:w_n]) : [z_i:w_i]=[z_j:w_j] \text{ for } i \ne j \}
  \end{align*}
  and at each point $x \in B_f$, $f$ is $k$-to-$1$ for some $k = k(x) < \infty$.
  Thus $f$ is discrete.
  To see that $f$ is open, we note that away from $B_f$ the mapping is open by local injectivity and
  on the branch set $B_f$, $f$ is locally equivalent to a polynomial, and is thus an open map.
  Thus we conclude that $f$ is an open and discrete map.
  
  Again by the definition of $f$, it is clear that $B_f$ has locally a simplicial structure.
  Since $f$ is locally a polynomial, we see that $f(B_f)$ is also locally topologically equivalent
  to an $(n-2)$ simplicial complex in $\mathbb{CP}^n$. Thus by Theorem
  \ref{thm:maintheorem} $f$ is locally equivalent to a PL mapping
  and hence topologically equivalent to a quasiregular mapping. A similar
  argument as in Lemma \ref{lemma:localtoglobal} implies that there exists
  PL structures on $(\mathbb{CP}^1)^n$ and $\mathbb{CP}^n$ so that $f$ is
  equivalent to a PL map.  That is, there exists a map, $\tilde f \colon X \to Y$
  such that $X$ and $Y$ are PL manifolds and the following diagram commutes:
  \[
    \begin{tikzcd}
      (\mathbb{CP}^1)^n \arrow{r}{f} \arrow[swap]{d}{\phi} & \mathbb{CP}^n \arrow{d}{\psi} \\
      X  \arrow{r}{\tilde f} & Y
    \end{tikzcd}
  \]
  where the mappings $\phi$ and $\psi$ are homeomorphisms.
  The spaces $X$ and $Y$ have a PL structure and so they
  also have a quasiconformal structure. When the dimension
  is not $4$, that is, $n \ne 2$, by \cite{Sullivan} there
  is in fact a unique quasiconformal structure. Thus we can
  identify $X$ and $Y$ with $\times_{i=1}^n \mathbb{CP}^1$
  and $\mathbb{CP}^n$, respectively.  In the case $n=4$, a
  direct computation of the maps shows the same result.
  Thus we conclude that there exists a quasiregular mapping
  \begin{align*}
    \tilde f \colon (\mathbb{CP}^1)^n \to \mathbb{CP}^n
  \end{align*}
  and we conclude this implies that $\mathbb{CP}^n$ is quasiregularly elliptic
  for all $n \geq 2$.
\end{proof}

\begin{remark}
  In \cite{HeinonenRickman2}
  Heinonen and Rickman ask the following:
  \emph{Let $f \colon \bS^3 \to \bS^3$ be an open and discrete map. Do there exist homeomorphisms $h_1,h_2 \colon \bS^3 \to \bS^3$ such that $h_1 \circ f \circ h_2$ is a quasiregular mapping?}
  The methods in this section offer an advance in the understanding of the problem;
  indeed, the techniques here can be used to show that for
  $n \geq 4$ any open and discrete map $f \colon \bS^n \to \bS^n$ with 
  $f B_f$ contained in a simplicial $(n-2)$-complex is, up to a conjugation by homeomorphisms,
  a quasiregular mapping.
\end{remark}

\bigskip

\begin{center}
  \textbf{Acknowledgments.}   
\end{center}

The project started at the 2017 Rolf Nevanlinna Colloquium, and the authors would like to thank
the event and its organizers for an inspiring atmosphere.

A major part of the proofs were done while the first named author was visiting
UCLA and he would like to extend his gratitude both to the university for their
hospitality and to the second named author for accommodation.

Both of the authors extend their gratitude to Mario Bonk and Pekka Pankka for discussions on the topic and 
to Mike Miller who has patiently answered
their questions about algebraic topology.  The authors also thank the referees who provided helpful comments and revisions.

\def\cprime{$'$}\def\cprime{$'$}


\begin{thebibliography}{{Sto}28}

\bibitem[AP17]{AaltonenPankka}
Martina Aaltonen and Pekka Pankka.
\newblock Local monodromy of branched covers and dimension of the branch set.
\newblock {\em Ann. Acad. Sci. Fenn. Math.}, 42(1):487--496, 2017.

\bibitem[BM17]{bonkmeyer}
Bonk~M. and Meyer~D.
\newblock {\em Expanding Thurston Maps}.
\newblock American Mathematical Soc., 2017.

\bibitem[BH99]{BridsonHaefliger}
Martin~R. Bridson and Andr{\'e} Haefliger.
\newblock {\em Metric spaces of non-positive curvature}, volume 319 of {\em
  Grundlehren der Mathematischen Wissenschaften [Fundamental Principles of
  Mathematical Sciences]}.
\newblock Springer-Verlag, Berlin, 1999.

\bibitem[Can78]{Cannon}
James~W. Cannon.
\newblock The recognition problem: what is a topological manifold?
\newblock {\em Bull. Amer. Math. Soc.}, 84(5):832--866, 1978.

\bibitem[CC00]{Foliations-book}
Alberto Candel and Lawrence Conlon.
\newblock {\em Foliations. {I}}, volume~23 of {\em Graduate Studies in
  Mathematics}.
\newblock American Mathematical Society, Providence, RI, 2000.

\bibitem[CH60]{ChurchHemmingsen1}
Philip~T. Church and Erik Hemmingsen.
\newblock Light open maps on {$n$}-manifolds.
\newblock {\em Duke Math. J}, 27:527--536, 1960.

\bibitem[CT78]{ChurchTimourian}
Philip~T. Church and J.~G. Timourian.
\newblock Differentiable maps with small critical set or critical set image.
\newblock {\em Indiana Univ. Math. J.}, 27(6):953--971, 1978.

\bibitem[Fre82]{freedman}
Freedman~M.
\newblock The topology of four-dimensional manifolds.
\newblock{\em{J. Differential Geom}}, 17(3):357--453, 1982.

\bibitem[GP74]{guilleminpollack}
Guillemin~V. and Pollack~A.
\newblock {\em Differential topology}
\newblock Prentice-Hall, Inc., Englewood Cliffs, N.J., 1974.

\bibitem[Hat02]{Hatcher}
Allen Hatcher.
\newblock {\em Algebraic topology}.
\newblock Cambridge University Press, Cambridge, 2002.

\bibitem[Hei02]{HeinonenICM}
Juha Heinonen.
\newblock The branch set of a quasiregular mapping.
\newblock In {\em Proceedings of the {I}nternational {C}ongress of
  {M}athematicians, {V}ol. {II} ({B}eijing, 2002)}, pages 691--700. Higher Ed.
  Press, Beijing, 2002.

\bibitem[HR98]{HeinonenRickman2}
Juha Heinonen and Seppo Rickman.
\newblock Quasiregular maps {$\mathbf S^3\to \mathbf S^3$} with wild branch sets.
\newblock {\em Topology}, 37(1):1--24, 1998.

\bibitem[HR02]{HeinonenRickman}
Juha Heinonen and Seppo Rickman.
\newblock Geometric branched covers between generalized manifolds.
\newblock {\em Duke Math. J.}, 113(3):465--529, 2002.

\bibitem[HS97]{HeinonenSemmes}
Juha Heinonen and Stephen Semmes.
\newblock Thirty-three yes or no questions about mappings, measures, and metrics.
\newblock {\em Conform. Geom. Dyn.}, 1:1--12, 1997.
	
\bibitem[Kap07]{Kapovitch-PerelmanStability}
Vitali Kapovitch.
\newblock Perelman's stability theorem.
\newblock In {\em Surveys in differential geometry. {V}ol. {XI}}, volume~11 of
  {\em Surv. Differ. Geom.}, pages 103--136. Int. Press, Somerville, MA, 2007.

\bibitem[KS79]{KirbyScharlemann}
Robion~C. Kirby and Martin~G. Scharlemann.
\newblock Eight faces of the {P}oincar\'e homology {$3$}-sphere.
\newblock In {\em Geometric topology ({P}roc. {G}eorgia {T}opology {C}onf.,
  {A}thens, {G}a., 1977)}, pages 113--146. Academic Press, New York-London,
  1979.

\bibitem[KL08]{kleinerlott}
Kleiner~B. and Lott~J.
\newblock Notes on Perelman's papers
\newblock{\em{Geometry \& Topology}}, 12(5):2587--2855, Mathematical Sciences Publishers, 2008.

\bibitem[Lui17]{Luisto-Characterization}
Rami Luisto.
\newblock A characterization of {BLD}-mappings between metric spaces.
\newblock {\em J. Geom. Anal.}, 27(3):2081--2097, 2017.

\bibitem[LP17]{LuistoPankka-Stoilow}
Rami Luisto and Pekka Pankka.
\newblock Sto\"ilow's theorem revisited.
\newblock {\em Preprint [arXiv:1701.05726]}, 2017.

\bibitem[MRV71]{MRV1971}
Olli Martio, Seppo Rickman, and Jussi V\"ais\"al\"a.
\newblock Topological and metric properties of quasiregular mappings.
\newblock {\em Ann. Acad. Sci. Fenn. Ser. A I}, (488):31, 1971.

\bibitem[MS79]{MartioSrebro}
Olli Martio and Uri Srebro.
\newblock On the local behavior of quasiregular maps and branched covering
  maps.
\newblock {\em J. Analyse Math.}, 36:198--212 (1980), 1979.

\bibitem[Per91]{Perelman-Stability}
Grisha Perelman.
\newblock Alexandrov spaces with curvatures bounded from below {II}.
\newblock Preprint, 1991.

\bibitem[Res89]{Reshetnyak67}
Yu.~G. Reshetnyak.
\newblock {\em Space mappings with bounded distortion}, volume~73 of {\em
  Translations of Mathematical Monographs}.
\newblock American Mathematical Society, Providence, RI, 1989.
\newblock Translated from the Russian by H. H. McFaden.

\bibitem[Ric93]{Rickman-book}
Seppo Rickman.
\newblock {\em Quasiregular mappings}, volume~26 of {\em Ergebnisse der
  Mathematik und ihrer Grenzgebiete (3) [Results in Mathematics and Related
  Areas (3)]}.
\newblock Springer-Verlag, Berlin, 1993.

\bibitem[RS72]{rourkesanderson}
Colin~P. Rourke and Brian~J. Sanderson.
\newblock {\em Introduction to piecewise-linear topology}.
\newblock Springer-Verlag, New York-Heidelberg, 1972.
\newblock Ergebnisse der Mathematik und ihrer Grenzgebiete, Band 69.

\bibitem[Sma07]{smale}
Smale~S.
\newblock{Generalized Poincar{\'e}'s conjecture in dimensions greater than four}.
\newblock{\em{Topological Library: Part 1: Cobordisms and Their Applications}}, World Scientific, 251--268, 2007.


\bibitem[{Sto}28]{Stoilow}
Simion {Sto\"\i low}.
\newblock {Sur les transformations continues et la topologie des fonctions
  analytiques.}
\newblock {\em {Ann. Sci. \'Ec. Norm. Sup\'er. (3)}}, 45:347--382, 1928.

\bibitem[Sul79]{Sullivan}
Dennis Sullivan.
\newblock Hyperbolic geometry and homeomorphisms.
\newblock In {\em Geometric topology ({P}roc. {G}eorgia {T}opology {C}onf.,
  {A}thens, {G}a., 1977)}, pages 543--555. Academic Press, New York-London,
  1979.

\bibitem[V{\"a}i66]{Vaisala}
Jussi V{\"a}is{\"a}l{\"a}.
\newblock Discrete open mappings on manifolds.
\newblock {\em Ann. Acad. Sci. Fenn. Ser. A I No.}, 392:10, 1966.

\end{thebibliography}
\end{document}